 \newcommand\gla{{\bf \lambda}} 
\newcommand\glaD{{\bf \lambda}^D} 
\newcommand\ep{\varepsilon} 
\newcommand\mk{\medskip}
\newcommand{\R}{\mathbb{R}} %
\newcommand{\Z}{\mathbb{Z}} %
\newcommand{\N}{\mathbb{N}} %
\newcommand{\T}{{\zu}} %
\newcommand{\suchthat}{: } %
\newcommand{\accol}[1]{{\left\{ #1 \right\}}} %
\newcommand{\biggaccol}[1]{{\biggl\{ #1 \biggr\}}} %
\newcommand{\set}[2]{\accol{#1 \suchthat #2}} %
\newcommand{\biggset}[2]{\biggaccol{#1 \suchthat #2}} %
\newcommand{\abs}[1]{{\left\lvert #1 \right\rvert}} %
\newcommand{\biggabs}[1]{{\biggl\lvert #1 \biggr\rvert}} %
\newcommand{\smallabs}[1]{{| #1 |}} %
\newcommand{\paren}[1]{{\left( #1 \right)}} %
\newcommand{\biggparen}[1]{{\biggl( #1 \biggr)}} %
\newcommand{\floor}[1]{\left\lfloor#1\right\rfloor} 
\newcommand{\norm}[2]{{\left\|#2\right\|}_{#1}} %
\newcommand{\deq}{\mathrel{\mathop:} = } %
\newcommand{\eps}{\varepsilon} %
\newcommand\zu{[0,1]}
\DeclareMathOperator{\loc}{loc}
\DeclareMathOperator{\Gr}{Gr}
\newcommand\bfk{{\bf k}}
\newcommand\bfK{{\bf K}}
\newcommand\bfl{{\bf l}}
\newtheorem{theorem}{Theorem}[section] %
\newtheorem{proposition}[theorem]{Proposition} %
\newtheorem{corollary}[theorem]{Corollary} %
\newtheorem{lemma}[theorem]{Lemma} %
\newtheorem{definition}{Definition} %
\newtheorem{remark}[theorem]{Remark} %
\newtheorem{conjecture}[theorem]{Conjecture} %
\newcommand{\Lebesgue}{\mathcal{L}}
\newcommand{\Borel}{\mathcal{B}}
\newcommand{\Fourier}{\mathcal{F}}
\newcommand{\calA}{\mathcal{A}}
\newcommand{\calF}{\mathcal{F}}
\newcommand{\calG}{\mathcal{G}}
\newcommand{\calH}{\mathcal{H}}
\newcommand{\calP}{\mathcal{P}}
\newcommand{\calB}{\mathcal{B}}
\newcommand{\calT}{\mathcal{T}}
\newcommand{\calX}{\mathcal{X}}
\newcommand{\calY}{\mathcal{Y}}
\newcommand{\calK}{\mathcal{K}}
\newcommand{\calO}{\mathcal{O}}
\newcommand{\Aun}{\calA_1}
\newcommand{\Besov}[3]{B^{#1}_{#2, #3}}
\begin{document}

\title[Local behavior of traces of Besov functions]{Local behavior of traces of Besov functions: \\ Prevalent results}

\author[J.-M.  Aubry ]{Jean-Marie Aubry}

\address{Laboratoire d'Analyse et Math\'ematiques Appliqu\'ees (CNRS UMR
  8050) \\
  Universit\'e Paris-Est- Cr\'eteil - Val-de-Marne \\ 61 av. du G\'en\'eral de Gaulle \\
  94010 Cr\'eteil  \\
  France}

\email{jmaubry@math.cnrs.fr}

\author[D.  Maman]{Delphine Maman}

\email{delphine.maman@univ-paris12.fr}

\author[S.  Seuret]{St\'ephane Seuret}

\email{seuret@univ-paris12.fr}

\subjclass[2000]{Prim. 46E35; Second. 26B35, 28A80, 37C20}

\keywords{Besov space; Trace theorem; Pointwise regularity; Hausdorff dimension and measures; Prevalence; Wavelets}

\begin{abstract}
  Let $1 \leq d < D$ and $(p, q, s) $ satisfying $0 < p < \infty$, $0
  < q \leq \infty$, $0 < s- d/p < \infty$.  In this article we study
  the global and local regularity properties of traces, on affine subsets of
  $\R^D$, of functions belonging to the Besov space
  $\Besov{s}{p}{q}(\R^D)$.  Given a $d$-dimensional subspace
  $\mathcal{H} \subset \R^D$, for almost all functions in
  $\Besov{s}{p}{q}(\R^D)$ (in the sense of prevalence), we are able to
  compute the singularity spectrum of the traces $f_a$ of $f$ on
  affine subspaces of the form $a+\mathcal{H}$, for Lebesgue-almost
  every $a \in \R^{D-d}$.  In particular, we prove that for
  Lebesgue-almost every $a \in \R^{D - d}$, these traces $f_a$ are
  more regular than what could be expected from standard trace
  theorems, and that $f_a$ enjoys a multifractal behavior.
\end{abstract}

\maketitle

\section{Introduction}
\label{sec:introduction}

%

Investigating regularity properties of traces of functions belonging
to some Besov or Sobolev spaces is a longstanding issue. For instance,
such questions arise from PDE's theory, where the Dirichlet condition
imposes some regularity properties of the trace of the solution on the
frontier of the domain.  In this article, we study the local behavior
of traces of functions belonging to the Besov space
$\Besov{s}{p}{q}(\R^D)$ on $d$-dimensional affine subspaces of $\R^D$.

\medskip

Not only concerned with global smoothness properties (i.e. to which
Sobolev and Besov spaces the traces belong), we will especially focus
on the local behavior of such traces.  The notion of pointwise
regularity we discuss in the sequel is the following.  Given a real
function $f \in L^\infty_{\loc}(\R^D)$ and $x_0 \in \R^D$, $f$ is said to
belong to ${\mathcal C}^{\alpha}(x_0)$, for some $\alpha\geq 0$, if
there exists a polynomial $P$ of degree at most $\lfloor\alpha\rfloor$
and a constant $C>0$ such that locally around $x_0$~:
\begin{equation}
  \label{defpoint}
  |f(x)-P(x-x_0)|\leq C|x-x_0|^{\alpha}.
\end{equation}
The local regularity of $f$ at $x_0$ is measured by the {\it pointwise
  H\"older exponent}~:
$$h_f(x_0)=\sup\{\alpha\geq 0:~f \in {\mathcal C}^{\alpha}(x_0)\}.$$
As will be observed soon, this exponent $h_f(x_0)$ may vary rather
erratically with $x_0$, and the relevant information is then provided
by the {\it spectrum of singularities} $d_f$ of $f$, which is the
function~: $$d_f: h \in[0,\infty] \longmapsto \mbox{dim}_{{\calH}}
\ E_f(h), \ \ \mbox{ where } E_f(h):= \{x_0\in \R^D: h_f(x_0)=h\}.$$
Here $\mbox{dim}_{{\calH}}$ stands for the Hausdorff dimension. We
adopt the convention that $\dim_{{\calH}} \, \emptyset =-\infty$. The
spectrum of singularities $d_f$ describes the geometrical repartition
of the singularities of $f$.

\medskip

This spectrum and its relevance in physics, especially in fluid
mechanics, goes back to the 1980's.  At this time, physicists have
been able to measure one coordinate of the velocity of a turbulent
fluid, and they observed that their signals exhibited very different
local behaviors at different times.  This variability  was proposed by
Frisch and Parisi as a possible explanation for the concavity of the
scaling function associated with the velocity   (see \cite{FP} and
several references on the subject). These works are very intimately
related to our questions, since  only the trace of the fluid's
velocity is measured in practice. Hence, to infer some results on the
regularity properties of the  three-dimensional velocity, it is key to
investigate the possible local behavior of traces of Sobolev or Besov
functions. 

\medskip

Precise results on the pointwise regularity of functions belonging to
classical spaces such as Besov $\Besov{s}{p}{\infty}(\R^D) $ spaces
have recently been obtained~\cite{Aubry:zg,
  fraysse:_how_smoot_is_almos_every, jaffard1, jaffard00:_besov}.
These results are of two kinds: universal upper bounds for the
spectrum of singularities (valid for any element of the space) and
almost-sure spectrum (valid for a ``large'' subset of the space, in the
sense of prevalence or Baire categories).  We detail these results, as
well as ours, now.

\medskip

In all that follows, $0<d<D$ are two fixed integers. Let $d' \deq D -
d$ and $(x, x') \in \R^d \times \R^{d'} = \R^D$.  For $a \in \R^{d'}$
we shall denote by $\calH_a \deq \{(x, a)\}$ the $d$-dimensional
affine subspace of $\R^D$.

\medskip

Let $f$ be a continuous function on $\R^D$.  Its trace on $\calH_a$ is
\begin{align*}
  f_a \deq f_{| \calH_a}: & \ \  \R^d \longrightarrow \R \\
  & \ \  x \ \longmapsto f(x, a)
\end{align*} 
If $f$ is not continuous, its trace can be defined by
Fourier regularization: we shall again write $f_a$ for
$\lim\limits_{N\to\infty} \paren{ \Fourier^{-1}
  \paren{ \mathbf{1}_{\abs{\xi}\leq N} \Fourier f} }_a$, whenever that
limit exists.

Standard trace theorems inevitably involve a loss of regularity, for
instance it is well known that when $s > 1/2$, the trace of $f \in
H^s(\R^2)$ on any one-dimensional subspace belongs to $H^{s-1/2}(\R)$.
Similar results hold for Besov spaces (see \S~\ref{sec:besov-spaces}):
it can easily be shown that the trace operator $f \mapsto f_a$ maps
$\Besov{s}{p}{\infty}(\R^D)$ to $\Besov{s-d'/p}{p}{\infty}(\R^d)$.
However, one may expect better regularity properties for most of the
traces $f_a$. Indeed, according to a result of
Jaffard~\cite{jaffard9}:
\begin{theorem}
  \label{theo:tracebesov}
  Let $0 < p, s < \infty$.  If $f \in \Besov{s}{p}{\infty}(\R^D)$,
  then for Lebesgue-almost all $a \in \R^{d'}$ $f_a \in
  \bigcap\limits_{s' < s} \Besov{s'}{p}{\infty}(\R^d)$.
\end{theorem}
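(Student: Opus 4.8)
The plan is to run a wavelet--Fubini argument. I would fix a compactly supported (bi)orthogonal wavelet basis of $\R^D$, smooth enough and with enough vanishing moments for the range of regularity involved, obtained by tensorization from a one-dimensional multiresolution analysis, and write $f=\sum_{j\geq0}\sum_\lambda\sum_{m\in\Z^D}c^\lambda_{j,m}\psi^\lambda_{j,m}$ with the $L^\infty$-normalization $\psi^\lambda_{j,m}(x)=\psi^\lambda(2^jx-m)$. In this normalization, $f\in\Besov{s}{p}{\infty}(\R^D)$ is equivalent to $\sum_{\lambda,m}\smallabs{c^\lambda_{j,m}}^p\leq C\,2^{j(D-sp)}$ for all $j$, and $g\in\Besov{s'}{p}{\infty}(\R^d)$ is equivalent to the analogous bound $\sum_{\mu,k}\smallabs{d^\mu_{j,k}}^p\leq C'\,2^{j(d-s'p)}$ on the $d$-dimensional wavelet coefficients of $g$.

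Splitting $m=(k,\ell)\in\Z^d\times\Z^{d'}$ and $x=(y,x')\in\R^d\times\R^{d'}$, I would observe that, the wavelets being compactly supported tensor products, the restriction of $\psi^\lambda_{j,(k,\ell)}$ to the hyperplane $x'=a$ is a $d$-dimensional wavelet (or scaling-function) atom at scale $j$ and position $k$, times a number of the form $\theta^\lambda(2^ja-\ell)$ with $\theta^\lambda$ a fixed compactly supported function on $\R^{d'}$. Collecting these contributions, the scale-$j$, position-$k$ wavelet coefficient $d^\mu_{j,k}(a)$ of $f_a$ would be a bounded linear combination of those $c^\lambda_{j,(k,\ell)}$ with $\smallabs{2^ja-\ell}\leq C$, so that
\[
  \smallabs{d^\mu_{j,k}(a)}^p \;\lesssim\; \sum_{\ell\,:\,\smallabs{2^ja-\ell}\leq C}\ \sum_\lambda\smallabs{c^\lambda_{j,(k,\ell)}}^p .
\]
The key point to exploit is that for a fixed $\ell$ the set $\set{a}{\smallabs{2^ja-\ell}\leq C}$ has measure $\lesssim2^{-jd'}$; hence, integrating over $a$ in a fixed cube $Q\subset\R^{d'}$ and using Fubini,
\[
  \int_Q\sum_{\mu,k}\smallabs{d^\mu_{j,k}(a)}^p\,da \;\lesssim\; 2^{-jd'}\sum_{\lambda,k,\ell}\smallabs{c^\lambda_{j,(k,\ell)}}^p \;\leq\; C\,2^{-jd'}2^{j(D-sp)} \;=\; C\,2^{j(d-sp)} .
\]
For fixed $s'<s$, multiplying by the weight $2^{j(s'p-d)}$ from the $\Besov{s'}{p}{\infty}(\R^d)$-characterization and summing over $j$ gives $\int_Q\sum_{j\geq0}2^{j(s'p-d)}\sum_{\mu,k}\smallabs{d^\mu_{j,k}(a)}^p\,da\lesssim\sum_{j\geq0}2^{jp(s'-s)}<\infty$, since $s'-s<0$. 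Therefore, for Lebesgue-almost every $a\in Q$ the series converges, so its general term stays bounded, i.e. $f_a\in\Besov{s'}{p}{\infty}(\R^d)$. (Choosing here some fixed $s'\in(0,s)$ also shows $\sum_j2^{-jd}\sum_{\mu,k}\smallabs{d^\mu_{j,k}(a)}^p<\infty$ for a.e. $a$, which makes the $d$-dimensional wavelet series a legitimate definition of $f_a$, consistent with the Fourier-regularization definition.) To conclude, I would take a countable exhaustion $\R^{d'}=\bigcup_nQ_n$ by cubes and a sequence $s'_m\uparrow s$; discarding the countably many null sets where membership fails leaves a full-measure set of $a$ on which $f_a\in\bigcap_{s'<s}\Besov{s'}{p}{\infty}(\R^d)$.

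The conceptual heart — and the reason one recovers the full $d'/p$ derivatives lost by the classical trace embedding $\Besov{s}{p}{\infty}(\R^D)\to\Besov{s-d'/p}{p}{\infty}(\R^d)$ — is the Fubini estimate: at each scale the large $D$-dimensional coefficients are spread out over the index $\ell$, so a typical hyperplane $x'=a$ meets only boundedly many of them. The only genuinely technical obstacle I anticipate is the bookkeeping in the second paragraph: the restriction of a $D$-dimensional wavelet expansion is not term-by-term a $d$-dimensional \emph{wavelet} expansion, because of the scaling-function factors in the tensor products. I would handle this either by first invoking the classical trace embedding (so that $f_a$ is known to admit a genuine $d$-dimensional wavelet expansion) and then estimating its coefficients by pairing $f$ against dual wavelets and using the fast off-diagonal decay between wavelets of different scales, or by the standard telescoping of the scaling-function contributions across scales; neither alters the exponents above.
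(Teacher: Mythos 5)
Your wavelet--Fubini proof is correct and follows essentially the same strategy as the paper's Appendix~\ref{sec:proof-coroll-refc}: both derive the integrated coefficient bound $\int_Q\sum_{k}\smallabs{d_{j,k}(a)}^p\,da\lesssim 2^{j(d-sp)}$ from the compact support of the wavelets in the $a$-direction, and both handle the $\bfl=0^d$ scaling-function factors in the restricted expansion by telescoping (codified in the paper as Proposition~\ref{proptrace1}). The only cosmetic difference is the final step, where the paper applies Markov's inequality and Borel--Cantelli scale by scale, while you sum the Besov-weighted series over $j$ and invoke Tonelli once.
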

In particular, when $s > d/p$, the inclusion
$\Besov{s-\ep}{p}{\infty}(\R^d) \hookrightarrow C^{s-\ep-d/p}(\R^d)$
 for any $\ep>0$ small enough implies that Lebesgue-almost all traces
$f_a$ exist and are uniform H\"older functions.
For the sake of integrity, we present a short and independent proof of
Theorem \ref{theo:tracebesov} in Appendix~\ref{sec:proof-coroll-refc}.

%

\newcommand{\hlevel}[2]{E_{#1}\paren{#2}} 
%

\medskip

By another result of Jaffard~\cite{jaffard1}, belonging to a Besov
space yields an upper bound on the spectrum of singularities:
\begin{theorem}
  \label{theo:upperboundebesov}
  Let $0 < p <  \infty$ and $d/p < s < \infty$.  For any $g \in
  \Besov{s}{p}{\infty}(\R^d)$,  for all $h \geq s-d/p$,
  \begin{equation*}
    d_g(h) \leq \min(d, d + (h-s) p),
  \end{equation*}
  and $E_f(h) = \emptyset $ if $h<s-d/p$.
\end{theorem}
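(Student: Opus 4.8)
The plan is to use the wavelet characterization of $\Besov{s}{p}{\infty}(\R^d)$ together with the description of the pointwise H\"older exponent in terms of wavelet leaders. Fix once and for all a sufficiently smooth, well-localized wavelet basis $\{\psi_\lambda\}$ indexed by the dyadic cubes $\lambda\subset\R^d$, normalized so that the coefficients $c_\lambda$ of $g$ satisfy: $g\in\Besov{s}{p}{\infty}(\R^d)$ if and only if $\sum_{|\lambda|=2^{-j}}|c_\lambda|^p\le C\,2^{-j(sp-d)}$ for every $j\ge 0$. First I would dispose of the easy part coming from $s>d/p$: the embedding $\Besov{s}{p}{\infty}(\R^d)\hookrightarrow C^{s-d/p}(\R^d)$ shows that $g$ is a uniform H\"older function, that $h_g(x_0)\ge s-d/p$ for \emph{every} $x_0$ (in \eqref{defpoint} take $P$ the Taylor polynomial of degree $\lfloor s-d/p\rfloor$), and hence that $E_g(h)=\emptyset$ whenever $h<s-d/p$. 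Uniform H\"older regularity also legitimizes the use of the wavelet-leader formula below.

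Next I would introduce the wavelet leaders $d_\lambda\deq\sup_{\lambda'\subseteq 3\lambda}|c_{\lambda'}|$ and establish the key a priori estimate: from $d_\lambda^{\,p}\le\sum_{\lambda'\subseteq 3\lambda}|c_{\lambda'}|^p$, summing over all $\lambda$ of generation $j$, interchanging sums and using the Besov bound at each finer generation $j'\ge j$, the geometric series $\sum_{j'\ge j}2^{-j'(sp-d)}$ converges \emph{because} $s>d/p$, giving
\begin{equation*}
  \sum_{|\lambda|=2^{-j}} d_\lambda^{\,p}\ \le\ C'\,2^{-j(sp-d)}\qquad(j\ge 0).
\end{equation*}
The second ingredient is the standard wavelet-leader criterion, valid since $g$ is uniformly H\"older: one has $h_g(x_0)\ge\liminf_{j\to\infty}\log d_{\lambda_j(x_0)}/\log 2^{-j}$, where $\lambda_j(x_0)$ denotes the generation-$j$ dyadic cube containing $x_0$. (This is precisely the implication ``fast uniform decay of the leaders around $x_0$ $\Rightarrow$ pointwise regularity at $x_0$'', obtained by a telescoping summation of the wavelet series and by taking for $P$ the truncated Taylor expansion of the coarse-scale part; it is the ``easy'' half of the leader formula, which is all I need here.)

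The proof then reduces to a counting argument. Fix $h\ge s-d/p$ and $\ep>0$, and set $\mathcal{L}_j(\ep)\deq\{\lambda:\ |\lambda|=2^{-j},\ d_\lambda\ge 2^{-j(h+\ep)}\}$. By the leader criterion, if $h_g(x_0)\le h$ then $\liminf_j\log d_{\lambda_j(x_0)}/\log 2^{-j}\le h<h+\ep$, so $\lambda_j(x_0)\in\mathcal{L}_j(\ep)$ for infinitely many $j$; consequently
\begin{equation*}
  \{x_0:\ h_g(x_0)\le h\}\ \subseteq\ \bigcap_{J\ge 1}\ \bigcup_{j\ge J}\ \bigcup_{\lambda\in\mathcal{L}_j(\ep)}\lambda .
\end{equation*}
The a priori estimate yields $\#\mathcal{L}_j(\ep)\le C'\,2^{j(d+p(h+\ep-s))}$. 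Hence, for any exponent $\sigma>d+p(h+\ep-s)$, the families $\{\lambda\in\mathcal{L}_j(\ep):\ j\ge J\}$ are covers of that set by cubes of diameter $\le\sqrt d\,2^{-J}$ and
\begin{equation*}
  \sum_{j\ge J}\ \sum_{\lambda\in\mathcal{L}_j(\ep)}(\operatorname{diam}\lambda)^\sigma\ \le\ C''\sum_{j\ge J}2^{\,j(d+p(h+\ep-s)-\sigma)},
\end{equation*}
which tends to $0$ as $J\to\infty$; thus $\mathcal{H}^\sigma\big(\{h_g\le h\}\big)=0$ and $\dim_{\calH}\{h_g\le h\}\le d+p(h+\ep-s)$. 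Letting $\ep\downarrow 0$, and using $E_g(h)\subseteq\{h_g\le h\}$ together with the trivial bound $\dim_{\calH}E_g(h)\le d$ (as $E_g(h)\subseteq\R^d$), gives $d_g(h)\le\min\big(d,\,d+(h-s)p\big)$; this also covers the regime $h\ge s$ via the ambient-dimension bound.

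The only genuinely delicate point is the wavelet-leader criterion itself — specifically that uniform decay $d_{\lambda_j(x_0)}\le 2^{-j(\alpha-\delta)}$ for all large $j$ forces $g\in C^{\alpha-\delta'}(x_0)$. Carrying this out requires controlling simultaneously the contributions of the wavelets at all coarse \emph{and} all fine scales near $x_0$, and checking that the polynomial in \eqref{defpoint} may be taken to be the truncated Taylor expansion assembled from the coarse-scale coefficients, the necessary bounds on those coefficients being furnished by the Besov membership and the uniform H\"older regularity of $g$. Everything else is bookkeeping with convergent geometric series.
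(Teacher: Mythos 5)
The paper does not actually prove this statement --- it is quoted as a theorem of Jaffard from \cite{jaffard1} --- so there is no internal proof to compare against; I assess your argument on its own. Your proof is correct and is essentially the standard one, phrased in the language of wavelet leaders. The two ingredients are exactly right: the $\ell^p$ bound $\sum_{|\lambda|=2^{-j}}d_\lambda^p\lesssim 2^{-j(sp-d)}$ on the leaders (where $s>d/p$ is precisely what makes the geometric series over finer scales converge, as you correctly flag), and the reconstruction half of the leader formula, $h_g(x_0)\ge\liminf_j\log d_{\lambda_j(x_0)}/\log 2^{-j}$ valid for uniformly H\"older $g$. That second ingredient is the ``reciprocal'' direction of the paper's Proposition~\ref{caracpointwise}, which the paper likewise states without proof; so the single step you leave as a black box is the same one the paper does, and it is the genuinely hard two-microlocal estimate, not the easy direction. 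The counting and covering step, together with the inclusion $E_g(h)\subset\{x:h_g(x)\le h\}$ and the trivial bound by the ambient dimension $d$, is routine and correctly executed. One observation worth making explicit: your argument bounds the dimension of the larger level set $\{x:h_g(x)\le h\}$ and not merely $E_g(h)$; this slightly stronger conclusion is in fact what the paper uses later, inside the proof of Proposition~\ref{prop:lowerboundtrace}.
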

 
\begin{remark}
  The results so far have been stated for Besov spaces
  $\Besov{s}{p}{q}$ with $q = \infty$ but it is clear from classical Besov 
  embeddings (see equation \eqref{eq:Besovembed} below) that they hold identically for any
  $q > 0$.
\end{remark}

 Not only is Theorem~\ref{theo:upperboundebesov} optimal, the upper
bound is actually an \emph{almost sure} equality in   $\Besov{s}{p}{q}(\R^D)$
(Theorem~\ref{theo:genericbesov}) in the sense of prevalence, as
explained below.

%

\medskip

Prevalence theory is used to supersede the notion of Lebesgue measure
in any real or complex topological vector space $E$.  This notion was
proposed by Christensen~\cite{christensen72:_haar_abelian_polis} and
independently by Hunt \textit{et al.}~\cite{hunt92:_preval}.  The
space $E$ is endowed with its Borel $\sigma$-algebra $\Borel(E)$ and
all Borel measures $\mu$ on $(E, \Borel(E))$ will be automatically
\emph{completed}, that is we put $\mu(A) \deq \mu(B)$ if $B \in
\Borel(E)$ and the symmetric difference $A \Delta B$ is included in
some $D \in \Borel(E)$ with $\mu(D) = 0$.  A set is said to be
\emph{universally measurable} if it is measurable for any (completed)
Borel measure.

\begin{definition}
  \label{defprevalence}
  A universally measurable set $A \subset E$ is called \emph{shy} if
  there exists a Borel measure $\mu$ that is positive on some compact
  subset $K$ of $E$ and such that
  $$ \mbox{ for every $x\in E$, } \ \ \mu(A + x) = 0 .$$
  More generally, a set that is included in a shy universally
  measurable set is also called shy.

  Finally, the complement in $E$ of a shy subset is called
  \emph{prevalent}.
\end{definition}

The measure $\mu$ used to show that some subset is shy or prevalent is
called a \emph{probe}.  It can be for instance the Lebesgue measure
carried by some finite-dimensional subspace of $E$: this is the
technique that will be used in \S~\ref{sec:probe-space}.

\medskip

When a set $B$ is prevalent, it is dense in $E$, $B + x$ is also
prevalent for any $x \in E$ and if $(B_n)_{n \in \N}$ is a sequence of
prevalent sets then so is $\bigcap_{n \in \N} B_n$.  Finally, when $E$
has finite dimension, $B$ is prevalent in $E$ if and only if it has
full Lebesgue measure.  This justifies that a prevalent set $B$ is a
``large'' set in $E$ and extends reasonably the notion of full
Lebesgue measure to infinite dimensional spaces.

From now on, without any possible confusion, the term ``almost all''
will be indiscriminately used to describe elements in a prevalent
subset of an infinite-dimensional space, or in a subset having full
Lebesgue measure in a finite-dimensional space.

In this setting, the following was proved by Fraysse and
Jaffard~\cite{fraysse:_how_smoot_is_almos_every}:
\begin{theorem}
  \label{theo:genericbesov}
  Let $0 < p < \infty$, $0 < q \leq \infty$ and $0< s-D / p < \infty$.
  For almost all $g \in \Besov{s}{p}{q}(\R^D)$,
  \begin{equation*}
    d_g(h) =
    \begin{cases}
      D + (h-s) p &\text{ if } h \in [s-D/p, s] \\
      -\infty &\text{ else}
    \end{cases}
  \end{equation*}
  and for $x$ in a set of full Lebesgue measure in $\R^D$, $h_g(x) = s$.
\end{theorem}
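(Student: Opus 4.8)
The plan is to pass to wavelet coordinates, observe that the upper bound is already available, and manufacture a single ``saturating'' function to serve as a prevalence probe. Fix a smooth compactly supported wavelet basis $(\psi_\lambda)_\lambda$ of $L^2(\R^D)$ indexed by dyadic cubes $\lambda$, each carrying a scale $j(\lambda)\ge 0$ and a position, with enough vanishing moments. Two classical facts reduce the whole statement to an assertion about arrays of real numbers: (a) $g\in\Besov{s}{p}{q}(\R^D)$ if and only if its coefficients $c_\lambda(g)$ satisfy $\bigl(\sum_{j\ge 0}2^{(s-D/p)jq}\bigl(\sum_{j(\lambda)=j}\abs{c_\lambda(g)}^p\bigr)^{q/p}\bigr)^{1/q}<\infty$, with the usual modification when $q=\infty$; and (b) Jaffard's pointwise criterion, which reads $h_g(x_0)$ off the decay of $\abs{c_\lambda(g)}$ along the dyadic cones shrinking to $x_0$. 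Granting (a)--(b), the inequality $d_g(h)\le D+(h-s)p$ on $[s-D/p,s]$ and the emptiness $E_g(h)=\emptyset$ for $h<s-D/p$ are precisely Theorem~\ref{theo:upperboundebesov} applied with $d$ replaced by $D$ (and its following Remark, for general $q$), and hold for \emph{every} $g$ in the space; so only the lower bounds $d_g(h)\ge D+(h-s)p$ on $[s-D/p,s]$, the equality $h_g(x)=s$ for Lebesgue-almost every $x$, and $E_g(h)=\emptyset$ for $h>s$ remain to be proved, and only for a prevalent set of $g$.

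To this end I would exhibit one function $G=\sum_\lambda c^G_\lambda\psi_\lambda\in\Besov{s}{p}{q}(\R^D)$ with ``robustly saturating'' coefficients and take Lebesgue measure on the compact segment $\{tG:t\in[0,1]\}$ as probe. Concretely, put $c^G_\lambda=\varepsilon_{j(\lambda)}\,2^{-sj(\lambda)}$ on \emph{all} cubes $\lambda$ (so that, by (b), $h_G(x)\le s$ at \emph{every} $x$, whence $E_G(h)=\emptyset$ for $h>s$), where $\varepsilon_j$ is a slowly varying weight --- $\varepsilon_j\equiv1$ when $q=\infty$ and $\varepsilon_j$ decaying polynomially in $j$ otherwise, which is exactly what makes the Besov norm of $G$ finite while leaving every pointwise exponent untouched --- and superimpose, for each rational $h\in(s-D/p,s)$, a sparse ``ubiquitous'' subfamily $\Lambda_h$ carrying the larger coefficients $2^{-hj(\lambda)}$, chosen sparse enough to keep the Besov norm finite yet ubiquitous enough that the set of points lying in infinitely many cubes of $\Lambda_h$ has Hausdorff dimension exactly $D+(h-s)p$ and supports a probability measure $m_h$ with matching Frostman exponent. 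A classical two-microlocal computation then yields $d_G(h)=D+(h-s)p$ on $[s-D/p,s]$; but the point is the next step.

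The heart of the matter is invariance of the spectrum under perturbation: for \emph{arbitrary} $f\in\Besov{s}{p}{q}(\R^D)$, for Lebesgue-almost every $t\in[0,1]$ the function $f+tG$ still satisfies all the lower bounds above. For the exponent-$s$ part, note that for a fixed cube $\lambda$ the set of $t$ with $\abs{c_\lambda(f)+tc^G_\lambda}<\tfrac12\varepsilon_{j(\lambda)}2^{-sj(\lambda)}$ is an interval of length $\le1$; a Fubini argument in the variables $(x,t)$, using that such cubes are abundant at every scale near a typical $x$, shows that for almost every $t$ one still has $\abs{c_\lambda(f+tG)}\ge\tfrac12\varepsilon_{j(\lambda)}2^{-sj(\lambda)}$ for $\lambda$ in a cone down to $x$ at infinitely many scales, hence $h_{f+tG}(x)=s$ for Lebesgue-almost every $x$ and $h_{f+tG}(x)\le s$ everywhere. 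For the descending part, one transports the measure $m_h$: for almost every $t$ the cancellations $c_\lambda(f)+tc^G_\lambda=0$ affect only an $m_h$-null set of points, so $f+tG$ still carries coefficients of size comparable to $2^{-hj}$ along $m_h$-almost every descending cone, which gives $h_{f+tG}(x)=h$ on a set of full $m_h$-measure and therefore $\dim_{\calH}E_{f+tG}(h)\ge D+(h-s)p$, the reverse inequality being supplied by Theorem~\ref{theo:upperboundebesov}. Intersecting the countably many full-measure sets of $t$ (one per rational $h$, plus the one for the exponent $s$) leaves a full-measure set of $t$; since Lebesgue measure on $\{tG:t\in[0,1]\}$ is a probe, this says exactly that the set of $g$ with the announced spectrum is prevalent.

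The main obstacle is this perturbation step, and within it the \emph{robust} dimension lower bound: one has to fit the ubiquitous families $\Lambda_h$ and the Frostman measures $m_h$ for all rational $h$ inside a single $G$ of finite Besov norm, and then show that adding a generic multiple of an \emph{arbitrary} $f$ neither lowers $\dim_{\calH}E(h)$ --- which is where the $m_h$-almost everywhere non-cancellation, established by Fubini in $(x,t)$, is used --- nor creates points of exponent $>s$. The rest is routine: keeping track of the weights $\varepsilon_j$ and of the sparsity of the $\Lambda_h$ when $q<\infty$, and checking that Jaffard's criterion is insensitive to the polynomial $P$ in \eqref{defpoint} for the non-integer exponents in play (with a separate elementary verification when $s\in\N$).
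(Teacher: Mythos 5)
This theorem is not proved in the paper; it is quoted from Fraysse--Jaffard \cite{fraysse:_how_smoot_is_almos_every}. Your overall template --- reduce to wavelet coefficients, get the upper bound for free from Theorem~\ref{theo:upperboundebesov}, build a ``saturating'' function whose coefficient profile is governed by dyadic approximation, and use a Lebesgue probe on a finite-dimensional subspace together with a Fubini argument against the Frostman measures $m_\alpha$ --- is exactly the strategy of the known proof (and of the present paper's own prevalence argument for $\calF_\alpha$ in \S\ref{sec:proof-keyresult}). But your probe is one-dimensional, and that is where the argument breaks.

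The statement you need is not only ``$h_{f+tG}(x)=s$ for Lebesgue-a.e.\ $x$'' but also ``$E_{f+tG}(h)=\emptyset$ for every $h>s$'', i.e.\ $h_{f+tG}(x)\le s$ for \emph{every} $x$. Your Fubini argument in $(x,t)$ can only ever deliver an ``a.e.\ $x$'' conclusion: it bounds $\int_{\T^D}\int_0^1\mathbf{1}_{\mathrm{bad}}(x,t)\,dt\,dx$, hence yields, for a.e.\ $t$, a full-measure set of good $x$, but says nothing about the (possibly nonempty, $t$-dependent) Lebesgue-null exceptional set. Passing from ``a.e.\ $x$'' to ``all $x$'' requires controlling $\Lebesgue\bigl(\{t:\exists x,\ h_{f+tG}(x)>s\}\bigr)$, an \emph{uncountable union over $x$} of null sets of $t$, which can be positive. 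This is precisely why Fraysse--Jaffard, and the present paper in Lemma~\ref{lemm:BaaNf}, use a probe of dimension $\dsonde=2^{dJ_0}$ with $J_0$ large: the probe functions $g^{(i)}$ have pairwise disjoint wavelet supports at each position, so that any two bad parameters $\beta,\tilde\beta$ must satisfy $\abs{\beta_i-\tilde\beta_i}\lesssim 2^{-(\gamma-H(\alpha))j}$ for \emph{each} $i$ independently; the bad set is then a product whose $\dsonde$-dimensional volume is $\lesssim 2^{-\dsonde(\gamma-H(\alpha))j}$, small enough to beat the $2^{Dj}$ count of positions (this is where the choice~\eqref{choixj0} enters) and apply Borel--Cantelli. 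A single $g=G$ gives no such product structure. A secondary point: the interval bound you quote (``length $\le 1$'') is computed at the threshold $\tfrac12\varepsilon_j2^{-sj}$, which is useless; the usable estimate compares against the Jaffard criterion threshold $2^{-\gamma j}$ with $\gamma>s$, giving an interval of length $\lesssim 2^{-(\gamma-s)j}/\varepsilon_j\to 0$ --- which saves the ``a.e.\ $x$'' part but not, for the reason above, the ``all $x$'' part. Finally, the superposition of separate ubiquitous families $\Lambda_h$ for each rational $h$ is unnecessary and creates consistency issues; the standard construction (compare~\eqref{eq:defcoefg}) uses a single coefficient profile depending only on the irreducible dyadic scale $J$ of each cube's position, which automatically produces every exponent $H(\alpha)\in[s-D/p,s]$ on $\calX^\alpha$ simultaneously.
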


\begin{remark}
  Another notion of genericity is given by Baire's theory: a property
  is said to be quasi-sure in a complete metric space $E$ if this
  property is realized on a residual (comeagre) set in $E$.  We choose
  to work within the prevalence framework, but Baire's genericity is
  also worthy of interest and will be studied in a subsequent paper.
\end{remark}

In this paper we prove the following result on the  singularity
spectrum of traces of almost all Besov functions. 
\begin{theorem}
  \label{theo:mainth2}
  Let $0 < p < \infty$, $0 < q \leq \infty$ and $0 < s- d/p < +\infty
  $. For almost all $f$ in $\Besov{s}{p}{q}(\R^D)$, for
  Lebesgue-almost all $a \in \R^{d'}$, the following holds:
  \begin{enumerate}

  \item the spectrum of singularities of $f_a$ is
    \begin{equation}
      \label{eq:uno2}
      d_{f_a}(h) =
      \begin{cases}
        d + (h-s) p &\text{ if } h \in [s-d/p, s] \\
        -\infty &\text{ else.}
      \end{cases}
    \end{equation}
    
  \item for every open set $\Omega \subset \R^d$, the level set
    $\hlevel{f_a}{s}\cap \Omega$ has full Lebesgue measure in
    $\Omega$.

  \end{enumerate}
\end{theorem}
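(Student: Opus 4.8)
The plan is to split \eqref{eq:uno2} into an a priori upper bound valid for \emph{every} $f$, and a prevalence construction supplying the matching lower bound together with the sharp localisation (ii). For the upper bound, fix any $f\in\Besov{s}{p}{q}(\R^D)$; by the embedding \eqref{eq:Besovembed} it lies in $\Besov{s}{p}{\infty}(\R^D)$, so Theorem~\ref{theo:tracebesov} provides a full-measure set $A_f\subset\R^{d'}$ on which $f_a\in\bigcap_{s'<s}\Besov{s'}{p}{\infty}(\R^d)$. Since $s-d/p>0$, for $s'$ near $s$ this gives $f_a\in C^{s'-d/p}(\R^d)\subset L^\infty_{\loc}(\R^d)$, so $h_{f_a}$ and $d_{f_a}$ are defined; and applying Theorem~\ref{theo:upperboundebesov} at each $s'<s$ and letting $s'\uparrow s$ yields, for $a\in A_f$, that $E_{f_a}(h)=\emptyset$ for $h<s-d/p$ and $d_{f_a}(h)\le d+(h-s)p$ for $h\in[s-d/p,s]$. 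What remains is the lower bound on $[s-d/p,s]$, the vanishing of $d_{f_a}$ beyond $s$, and (ii).

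The core of the prevalence step is an $\R^d$ statement isolating the analysis behind Theorem~\ref{theo:genericbesov} (legitimate on $\R^d$ because there the standing hypothesis reads $s-d/p>0$, which holds): there is a deterministic saturating wavelet series $\tilde G\in\Besov{s}{p}{q}(\R^d)$ such that, for \emph{every} $h\in\bigcap_{s'<s}\Besov{s'}{p}{\infty}(\R^d)$ and Lebesgue-almost every $t\in\R$, the function $h+t\tilde G$ has spectrum $d+(\cdot-s)p$ on $[s-d/p,s]$ and $-\infty$ elsewhere, with $E_{h+t\tilde G}(s)$ co-null in $\R^d$ (whence (ii) for $h+t\tilde G$, since its complement then meets every open set in a null set). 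I would then transfer this to $\R^D$ by a tensor-product probe: for each integer $n\ge1$ fix $\phi_n\in C^\infty_c(\R^{d'})$ with $\phi_n\equiv1$ on $[-n,n]^{d'}$ and set $G_n\deq\tilde G\otimes\phi_n$; tensoring a Besov function with a compactly supported smooth one keeps it in $\Besov{s}{p}{q}(\R^D)$, $G_n$ is continuous, and $(G_n)_a=\phi_n(a)\tilde G=\tilde G$ for all $a\in[-n,n]^{d'}$.

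Running the prevalence argument: let $B_n$ be the set of $f$ for which $f_a$ satisfies both conclusions of the theorem for a.e. $a\in[-n,n]^{d'}$; the target set is $\bigcap_nB_n$, so by countable stability of prevalence it suffices to prove each $B_n$ prevalent, using as probe the Lebesgue measure $\mu_n$ on the line $V_n\deq\R\,G_n$. By Definition~\ref{defprevalence} this amounts to: for every $f$ and Lebesgue-a.e. $t\in\R$, $f+tG_n\in B_n$. Since the trace is linear and $(G_n)_a$ exists for all $a$, one has $(f+tG_n)_a=f_a+t\tilde G$ for $a\in A_f\cap[-n,n]^{d'}$; applying Fubini to the (measurable) set $\{(t,a)\in\R\times[-n,n]^{d'}:f_a+t\tilde G\text{ fails a conclusion}\}$ — measurability coming from $(t,a)\mapsto f_a+t\tilde G$ being a measurable map into $\Besov{s-\eps}{p}{\infty}(\R^d)$ together with Borel measurability of $h\mapsto d_h$ — it is enough that for a.e. $a$ the corresponding $t$-section be Lebesgue-null. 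For $a\in A_f$ one has $f_a\in\bigcap_{s'<s}\Besov{s'}{p}{\infty}(\R^d)$, so this is exactly the perturbation property of $\tilde G$ applied with $h=f_a$, which then delivers both conclusions of the theorem for $f_a+t\tilde G$; the proof is complete.

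The hard part is the $\R^d$ perturbation statement, and precisely the two features going beyond Theorem~\ref{theo:genericbesov} as quoted: it must hold for the \emph{enlarged} class $\bigcap_{s'<s}\Besov{s'}{p}{\infty}(\R^d)$ — this is where the ``endpoint loss'' in Theorem~\ref{theo:tracebesov} must be absorbed — and the ubiquity-type lower bounds computing $\dim_{\calH}E_{h+t\tilde G}(h)=d+(h-s)p$, as well as the co-nullity of $E_{h+t\tilde G}(s)$, must be uniform in $h$ over that class and hold for a.e. $t$ (the a priori upper bound of the first paragraph, applied on $\R^d$, supplies the matching ``$\le$''). The remaining ingredients — the measurability needed for prevalence and for Fubini, the membership $\tilde G\otimes\phi_n\in\Besov{s}{p}{q}(\R^D)$, and the countable reduction from $\R^{d'}$ to the cubes $[-n,n]^{d'}$ — are routine.
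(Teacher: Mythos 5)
Your overall organization matches the paper's: (a) an a priori upper bound from Theorems~\ref{theo:tracebesov} and~\ref{theo:upperboundebesov} valid for \emph{every} $f$, then (b) a prevalence argument for the matching lower bound and the conullity of $\hlevel{f_a}{s}$. Part (a) is handled correctly, and the tensor trick $G_n=\tilde G\otimes\phi_n$ with a smooth plateau is an appealing way to make all traces of the probe equal to a fixed $\tilde G$, which would dispense with the wavelet hypothesis $(\calH_N)$ and the quantitative bound $\abs{G_{d'}(2^j a)}>j^{-2d'}$ of Proposition~\ref{prop:protr1}. The gap is the central ingredient you defer to: the claim that a \emph{single} saturating $\tilde G\in\Besov{s}{p}{q}(\R^d)$ exists such that for every $h\in\bigcap_{s'<s}\Besov{s'}{p}{\infty}(\R^d)$ and a.e.\ $t$, $h+t\tilde G$ has the full spectrum. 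That is a \emph{one-dimensional} probe for the multifractal property, and the wavelet/Borel--Cantelli estimate does not close for it. Fix $\alpha\ge1$ and $\gamma>H(\alpha)=s-d/p+d/(\alpha p)$, and bound the measure of $t$'s for which some $x\in\calX^\alpha$ has all cone-of-influence coefficients of $h+t\tilde G$ below $N2^{-\gamma j}$. At a node of scale $j_0$ with scale-$j_1=\lfloor\alpha j_0\rfloor$ descendant, the coefficient of $\tilde G$ is of order $2^{-H(\alpha)j_1}$ (that is what makes $\tilde G$ saturating), so the smallness constraint confines $t$ to an interval of length $\approx2^{-(\gamma-H(\alpha))j_1}$ --- and with a one-dimensional probe that is the \emph{only} independent constraint at that node. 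Summing over the $2^{dj_0}$ nodes gives $\sim2^{j_0(d-\alpha(\gamma-H(\alpha)))}$, which diverges whenever $\gamma-H(\alpha)<d/\alpha$; and you must let $\gamma\downarrow H(\alpha)$ to capture the spectrum. So the union bound fails for every choice of $\tilde G$.

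This is precisely why the paper's probe space has dimension $d_1=2^{dJ_0}$ with $J_0$ chosen so that $d-d_1(\gamma-H(\alpha))<0$ (see~\eqref{choixj0}): the functions $g^{(i)}$ redistribute the coefficient of $g$ at each node onto the $d_1$ sub-cubes at scale $j+J_0$, producing $d_1$ genuinely independent constraints on $\beta\in\R^{d_1}$ (Lemma~\ref{lemm:BaaNf}) and a per-node bad set of measure $2^{-d_1(\gamma-H(\alpha))j_1}$, which does beat $2^{dj_0}$. Since $d_1\to\infty$ as $\gamma\downarrow H(\alpha)$, the complement of $\calF_\alpha$ must first be decomposed into the countable family $\calO_{\gamma_n,N}$, each piece receiving its own, arbitrarily high-dimensional probe; no single finite-dimensional probe, let alone a line, can serve for the whole statement. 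Your tensor construction survives this fix (take $d_1$ functions $\tilde G^{(i)}\otimes\phi_n$), but your sets $B_n$ must then be further refined along $(\alpha,\gamma,N)$, at which point the argument essentially becomes the paper's. A secondary, repairable point: the measurability you invoke (``$h\mapsto d_h$ is Borel'') is not obvious and is exactly what the paper takes pains to establish via analytic sets and Choquet's theorem, especially in the non-separable case $q=\infty$.
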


Let us make some remarks on Theorem \ref{theo:mainth2}:

\begin{itemize}[leftmargin=0em,itemindent=1em]

\item In a given Besov space $\Besov{s}{p}{q}(\R^D)$ (Theorem
  \ref{theo:genericbesov}), as well as in $C^\alpha(\R^D)$
  \cite{Jaffard4} or for Borel measures supported by $\zu^D$
  \cite{BucSeu}, the almost-sure regularity is often the ``worst
  possible'', i.e. the upper bound on the spectrum valid for all
  elements of the considered space turns out to be an equality for
  almost all functions or measures. This is not the case in Theorem
  \ref{theo:mainth2}, for which the almost sure spectrum does not
  coincide with the {\em a priori} upper bound, and thus the traces
  are more regular than what could be expected {\em a priori}.

  \medskip

\item Observe that the singularities with H\"older exponent $h$ less
  than $s-d/p$ are ``not seen'' by Lebesgue-almost every traces $f_a$.
  This corresponds to the level sets $E_f(h)$ of Hausdorff dimension
  less than $d'$. B. Mandelbrot referred to this phenomenon as {\em
    negative dimensions}: By this, he means that almost every function
  $f\in\Besov{s}{p}{q}(\R^D)$ possesses singularities with exponent
  $s-D/p\leq h<s-d/p$, but these singularities form a set of too small a
  dimension to intersect a large quantity among the hyperplanes
  $\mathcal{H}_a$ of dimension $d'=D-d$.

\end{itemize}

\begin{figure}
  \begin{center}\includegraphics[width=8.5cm,height=6.5cm]{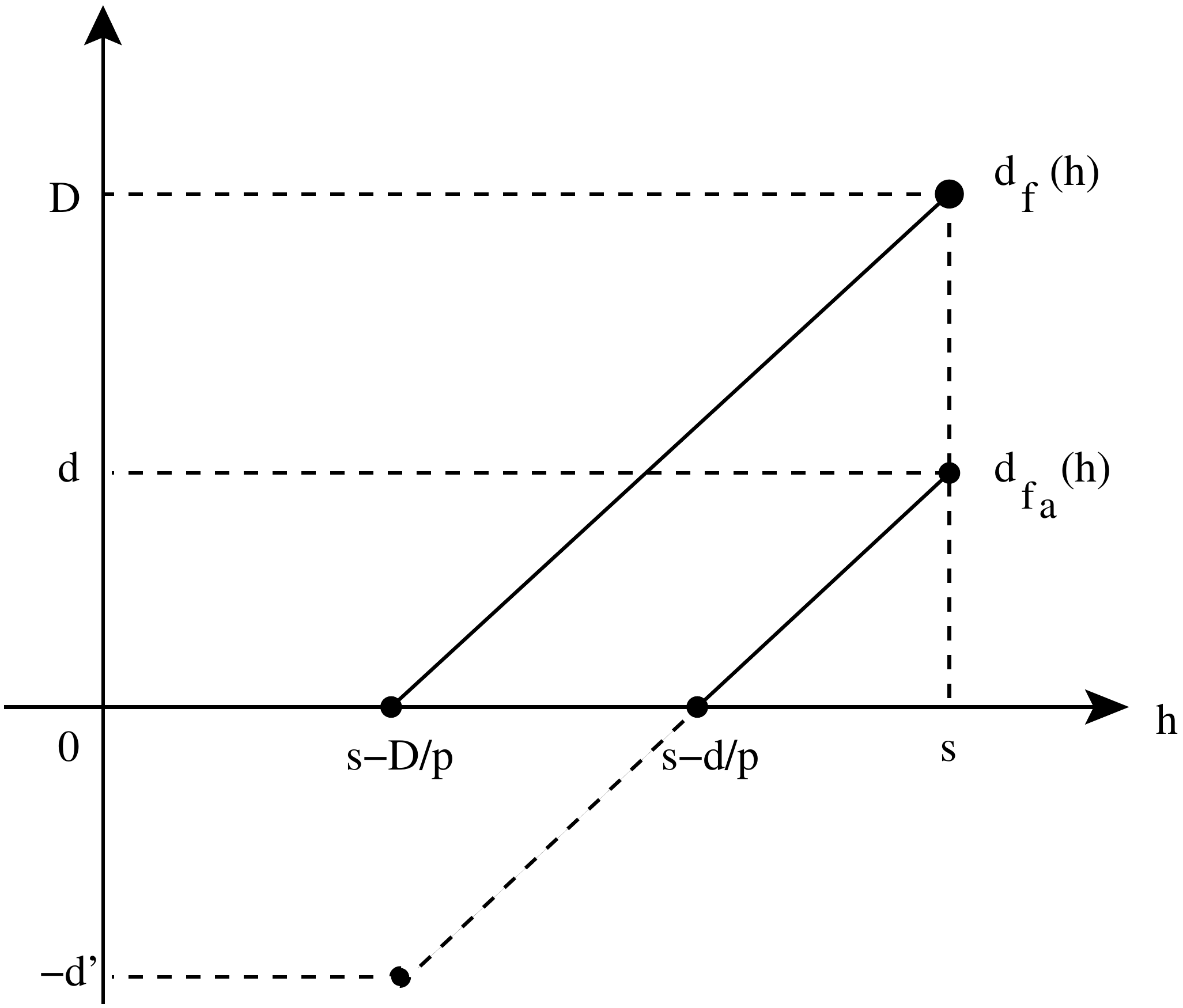}
    \caption{Singularity spectrum of almost all
      $f\in\Besov{s}{p}{q}(\R^D)$ and its trace $f_a$ for Lebesgue
      almost every $a\in \R^{d'}$. }
    \label{fig1}
  \end{center}
\end{figure}

\begin{remark}
  Theorem \ref{theo:mainth2} and the above remark are reminiscent of
  classical results of P. Mattila ~\cite{mattila1} on the Hausdorff
  dimensions of intersection of fractal subset of $\R^D$ with
  Lebesgue-almost all $d$-dimensional hyperplanes, or of sliced
  measures \cite{mattila1,jarvenpaa}.
\end{remark}

In this theorem, all the hyperplanes $\calH_a$ on which the traces are
taken are parallel (to the $d$ first coordinates axes).  Since the
Besov spaces are invariant by unitary transformation of the
coordinates, the result remains valid in any fixed direction.  Thanks
to the stability of prevalence by countable intersection, we thus
obtain: 
\begin{corollary} Let $\Delta$ be a countable subset of the
Grassmannian $\Gr_d(D)$.  Under the same hypotheses on $p, q, s$, for
almost all $f$ in $\Besov{s}{p}{q}(\R^D)$, for any $\calH \in \Delta$,
for Lebesgue-almost all $a \in \mathcal{H}^{\perp}$, the trace of $f$
on $\calH + a$ has the properties stated in
Theorem~\ref{theo:mainth2}.
\end{corollary}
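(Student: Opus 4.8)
The plan is to deduce the corollary from Theorem~\ref{theo:mainth2} by rotating each plane of $\Delta$ onto the fixed coordinate subspace $\calH_0 = \{(x,0)\}$, and then to combine the resulting prevalent sets through the stability of prevalence under countable intersections. So I fix $\calH \in \Gr_d(D)$, pick an orthogonal transformation $U = U_\calH$ of $\R^D$ with $U(\calH_0) = \calH$ (hence also $U(\calH_0^{\perp}) = \calH^{\perp}$), write $E \deq \Besov{s}{p}{q}(\R^D)$, and consider the composition operator $\Phi_U \colon f \mapsto f\circ U$. The first thing to record is that $\Phi_U$ is a linear isomorphism of $E$ onto itself with continuous inverse $\Phi_{U^{-1}}$: with the modulus-of-smoothness definition of the Besov norm this is immediate, since $\Phi_U$ preserves the $L^p$ norm and, $U$ being an isometry that permutes balls, preserves each modulus of smoothness $\omega_k(\cdot,t)_p$ (alternatively one uses rotation invariance of the Littlewood--Paley decomposition; in fact $\Phi_U$ is an isometry for that norm). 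I would also note that $\Phi_U$ commutes with the Fourier truncation $\mathbf 1_{|\xi|\le N}\Fourier$, the ball $\{|\xi|\le N\}$ being rotation invariant, so that the trace of $f\circ U$ on $\calH_0 + b$ equals the trace of $f$ on $\calH + Ub$ precomposed with the linear isometry $x\mapsto Ux$ from $\calH_0$ onto $\calH$.

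Next I would transfer prevalence through $\Phi_U$. Suppose $A\subset E$ is prevalent, so $E\setminus A$ is shy with a probe $\mu$ (a Borel measure positive on some compact $K\subset E$ with $\mu\paren{(E\setminus A)+g}=0$ for every $g\in E$). Then the pushforward $\nu \deq (\Phi_U)_*\mu$ is a Borel measure positive on the compact set $\Phi_U(K)$, and for every $g\in E$ one has $\nu\paren{\Phi_U(E\setminus A)+g} = \mu\paren{(E\setminus A)+\Phi_U^{-1}g} = 0$; since $\Phi_U$ is a homeomorphism, $\Phi_U(A)$ remains universally measurable, hence $\Phi_U(A)$ is prevalent, and the same reasoning applied to $\Phi_U^{-1}$ shows that $\Phi_U^{-1}(A)$ is prevalent whenever $A$ is.

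With these two observations the argument is routine. Let $A_0\subset E$ be the prevalent set furnished by Theorem~\ref{theo:mainth2} for the coordinate direction $\calH_0$, and put $A_\calH \deq \Phi_U^{-1}(A_0)$, which is prevalent by the previous step. If $f\in A_\calH$ then $g \deq f\circ U \in A_0$, so for Lebesgue-almost every $b\in\calH_0^{\perp}=\R^{d'}$ the trace $g_b$ satisfies properties (i)--(ii) of Theorem~\ref{theo:mainth2}. By the commutation remark above, for such $b$ the trace of $f$ on $\calH + Ub$ is $g_b$ composed with an isometry of $\R^d$; since pointwise H\"older exponents are transported by isometries and Hausdorff dimension is isometry invariant, the singularity spectrum and the level set at exponent $s$ of that trace are those of $g_b$, i.e. again satisfy (i)--(ii). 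Finally $U$ carries $\calH_0^{\perp}$ isometrically onto $\calH^{\perp}$, hence sends Lebesgue-null sets to Lebesgue-null sets, so for every $f\in A_\calH$ the conclusion of Theorem~\ref{theo:mainth2} holds for Lebesgue-almost every $a\in\calH^{\perp}$.

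To conclude, set $A_\Delta \deq \bigcap_{\calH\in\Delta} A_\calH$. Since $\Delta$ is countable and each $A_\calH$ is prevalent, $A_\Delta$ is prevalent, and every $f\in A_\Delta$ has, for every $\calH\in\Delta$ and Lebesgue-almost every $a\in\calH^{\perp}$, the properties stated in Theorem~\ref{theo:mainth2}. There is no genuine obstacle in this plan: the only points requiring (elementary) verification are that the pushforward of a probe under a linear homeomorphism is again a probe, and that the Besov norm is invariant under rotations of $\R^D$ and under the Fourier regularization defining the trace; both are standard, so the corollary is essentially a soft consequence of Theorem~\ref{theo:mainth2}.
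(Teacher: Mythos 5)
Your proof is correct and takes essentially the same route as the paper: transfer of prevalence along the composition operator $f\mapsto f\circ U$ (justified by the invariance of Besov norms under orthogonal changes of variables) followed by countable-intersection stability of prevalence. You have simply filled in the details the paper leaves implicit — that the pushforward of a probe under a linear homeomorphism is again a probe, that $\Phi_U$ commutes with the Fourier truncation defining the trace, and that Hölder exponents, Hausdorff dimension, and Lebesgue-null sets are transported by isometries.
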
 

Unfortunately no Fubini theorem holds for prevalence, so we cannot
directly deduce from this the natural generalization below, which we
leave for subsequent studies.

\begin{conjecture} 
  Consider the Grassmannian $ \Gr_d(D)$ and its Haar measure
  $\mu_{d,D}$.  For almost all $f$ in $\Besov{s}{p}{q}(\R^D)$, for
  $\mu_{d,D}$-almost all $\calH \in \Gr_d(D)$, for Lebesgue-almost all
  $a \in \mathcal{H}^{\perp}$, the trace of $f$ on $\calH + a$ has the
  properties stated in Theorem~\ref{theo:mainth2}.
\end{conjecture}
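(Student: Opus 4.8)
The plan is to decouple the two inequalities hidden in the spectrum identity~\eqref{eq:uno2}: the upper bound on $d_{f_a}$, together with the existence and global smoothness of the traces, will come essentially for free from Theorems~\ref{theo:tracebesov} and~\ref{theo:upperboundebesov} via an \emph{honest} Fubini argument; the lower bound, together with the full-measure property, requires a prevalence probe that is efficient in $\mu_{d,D}$-almost every direction at once. Write $P(f,\calH,a)$ for the conjunction of~\eqref{eq:uno2} and of the full-measure assertion for the trace of $f$ on $\calH+a$, i.e. the two properties of Theorem~\ref{theo:mainth2}. Since $\Gr_d(D)$ is compact, $\mu_{d,D}$ is finite; each $\calH^\perp$ is a copy of $\R^{d'}$ with its ($\sigma$-finite) Lebesgue measure; and $(f,\calH,a)\mapsto\mathbf{1}[\neg P(f,\calH,a)]$ is jointly Borel, being assembled from countably many $\limsup$-conditions on the wavelet coefficients of $f$ read in the rotated dyadic grid $R_\calH^{-1}(2^{-j}\Z^D)$, where $R_\calH$ depends measurably on $\calH$. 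Hence the conjecture, for a fixed $f$, is equivalent to: $\mathrm{Leb}(\{a\in\calH^\perp:\neg P(f,\calH,a)\})=0$ for $\mu_{d,D}$-a.e. $\calH$. Call $\calG$ the set of such $f$. To show $\calG$ prevalent it suffices to exhibit a Borel probe $\mu$ on $\Besov{s}{p}{q}(\R^D)$, positive on a compact set, with
\[
\text{for every }f,\ \text{for }\mu_{d,D}\text{-a.e. }\calH\text{ and Lebesgue-a.e. }a\in\calH^\perp:\qquad \mu\bigl(\{\,g:\neg P(f+g,\calH,a)\,\}\bigr)=0; \qquad(\star)
\]
two applications of Tonelli to the nonnegative, jointly Borel integrand $\mathbf{1}[\neg P(f+g,\calH,a)]$ (legitimate, all measures in sight being $\sigma$-finite) then turn $(\star)$ into $\mu(\{g:f+g\notin\calG\})=0$ for every $f$, which is exactly prevalence of $\calG$.

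The ``easy half'' of $P$ — existence of the trace, its global regularity, and the upper bound — is unobstructed. Fixing $\calH$ and composing with a rotation sending $\calH$ onto $\R^d\times\{0\}$, unitary invariance of Besov spaces and Theorem~\ref{theo:tracebesov} give, for \emph{every} $u\in\Besov{s}{p}{q}(\R^D)$ and Lebesgue-a.e. $a\in\calH^\perp$, that $u_{\calH+a}\in\bigcap_{s'<s}\Besov{s'}{p}{\infty}(\R^d)$; by joint measurability and Tonelli this upgrades to: for $\mu_{d,D}$-a.e. $\calH$ and Lebesgue-a.e. $a\in\calH^\perp$. Inserting this into Theorem~\ref{theo:upperboundebesov} and letting $s'\uparrow s$ yields, for $\mu_{d,D}$-a.e. $\calH$ and a.e. $a$, the bounds $d_{u_{\calH+a}}(h)\le d+(h-s)p$ on $[s-d/p,s]$ and $E_{u_{\calH+a}}(h)=\emptyset$ for $h<s-d/p$. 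No prevalence is used here — it is a plain a.e. statement to which Fubini applies — so the whole burden of the proof is the matching lower bound $d_{(f+g)_{\calH+a}}(h)\ge d+(h-s)p$ on $[s-d/p,s]$ and the full-measure property, to be secured for $\mu$-a.e. $g$ and a.e. $(\calH,a)$ through the probe.

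Building that probe is the crux, and I expect it to be the main obstacle; it is also precisely why the statement remains a conjecture, since prevalence has no Fubini theorem and one cannot simply glue the direction-by-direction probes of the Corollary into a single one. I see two candidate routes. (a) An \emph{isotropic random multifractal probe}: let $\mu$ be the law of a random wavelet series $g=\sum 2^{-js}\psi_{j,k}$ whose indices $(j,k)$ form a rotation-invariant random configuration — for instance a Poisson point process on the cone $\bigcup_{j\ge 0}\{j\}\times 2^{-j}\Z^D$ symmetrised by an independent uniform rotation of $\R^D$, or a genuinely continuous-scale Poisson construction — with intensity tuned so that a.s. $g\in\Besov{s}{p}{q}(\R^D)$ with spectrum saturating Theorem~\ref{theo:upperboundebesov}, and truncated so that $\mathrm{law}(g)$ is tight and carried by a compact set while remaining $O(D)$-invariant in law. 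One then establishes $(\star)$ for the single coordinate direction by the wavelet-counting scheme underlying Theorem~\ref{theo:mainth2} — the key point being that adding the rough $g$ cannot lift the prescribed pointwise exponents of $f$ above $s$ at scales where $g$ is rough at the right rate, while the easy half controls $f_{\calH+a}$ from above — and $(\star)$ for $\mu_{d,D}$-a.e. direction then follows from $O(D)$-invariance. (b) A \emph{measurable-selection probe}: rotate a fixed saturating function $F_0$ into $F_\calH\deq F_0\circ R_\calH^{-1}$, fix a countable dense family $(\calH_n)$ in $\Gr_d(D)$, set $\mu\deq\bigotimes_n\bigl(t\mapsto tF_{\calH_n}\bigr)_*\nu_n$ with $\nu_n$ Lebesgue on a suitably shrinking interval so that $\mu$ stays carried by a compact set, and upgrade ``density of $(\calH_n)$'' to ``$\mu_{d,D}$-a.e. $\calH$'' via a stability lemma: saturation of the trace in a direction $\calH_n$ propagates, on a full-Lebesgue set of base points, to a neighbourhood of $\calH_n$ in $\Gr_d(D)$. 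Route (b) looks fragile, since that propagation is a pointwise-regularity statement along rotating hyperplanes and such statements are typically discontinuous; I would therefore bet on route (a), whose technical heart is an \emph{isotropic trace multifractal formalism} — showing the isotropic random function has, almost surely, traces with the saturating spectrum and the full-measure property on $\mu_{d,D}\otimes\mathrm{Leb}$-almost every $(\calH,a)$, robustly under translation by an arbitrary fixed $f$ — and essentially all the work of the proof would be concentrated there.
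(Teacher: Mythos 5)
The statement you are attempting is stated in the paper as a \emph{conjecture}: the authors prove Theorem~\ref{theo:mainth2} only for a fixed direction, extend it to countably many directions by stability of prevalence under countable intersection, and explicitly leave the $\mu_{d,D}$-almost-every-direction version open, precisely because no Fubini theorem holds for prevalence. Your proposal does not close this gap either: it is a program, not a proof. The reduction you perform (the ``easy half'' via Theorems~\ref{theo:tracebesov} and~\ref{theo:upperboundebesov} plus Tonelli, and the observation that a single direction-independent probe satisfying $(\star)$ would suffice) is sound in spirit and consistent with how the paper argues in the fixed-direction case, but the entire content of the conjecture is the construction of such a probe, and at that point you offer only two candidate routes, one of which (b) you yourself discard and the other (a) you leave as ``essentially all the work''. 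Concretely, the paper's shyness argument (Lemma~\ref{lemm:BaaNf}) hinges on grid-aligned structure that an isotropic probe destroys: the probe functions $g^{(i)}$ have coefficients supported on disjoint families of dyadic sub-cubes aligned with the fixed hyperplane, so that $e^{(i')}_{\lambda_1^{(i)}}(a)=0$ for $i\neq i'$, and the lower bound \eqref{eq:eq312} on $\abs{e^{(i)}_{\lambda_1^{(i)}}(a)}$ rests on the non-vanishing estimate $\abs{G_{d'}(2^j a)}>j^{-2d'}$ of Proposition~\ref{prop:protr1}, which comes from the wavelet hypothesis \eqref{eq:hypH} evaluated along the dyadic grid in the fixed normal direction. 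After symmetrising by a random rotation (or passing to a continuous-scale Poisson construction) no analogue of either the separation property or the lower bound on the trace coefficients is provided, and these are exactly the two ingredients that make the finite-dimensional Lebesgue estimate on $\calB_{a,\lambda_0}$ work.

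A second genuine gap is measurability. You assert that $(f,\calH,a)\mapsto\mathbf{1}[\neg P(f,\calH,a)]$ is ``jointly Borel, being assembled from countably many $\limsup$-conditions on wavelet coefficients''. That is not how it goes even for a single fixed $\calH$: the conditions defining the relevant sets contain an existential quantifier over $x\in\calX^\alpha$, so one is dealing with projections of Borel sets, i.e.\ analytic sets, and the paper must invoke universal measurability of analytic sets (Theorem~\ref{theo:unimeasurable}), with a separate and more delicate argument in the non-separable case $q=\infty$ (Proposition~\ref{prop:mesure-nonsep}, via Definition~\ref{defi:analnonsep} and Lemma~\ref{lemm:compactsup}). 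For your scheme you would additionally need joint measurability in the direction variable $\calH$, i.e.\ a measurable-in-$\calH$ version of the whole trace-coefficient apparatus \eqref{eq:tracecoef4}--\eqref{eq:tracecoef3} in rotated frames, which is nowhere established and cannot be waved through as ``Borel''. So both pillars of your argument --- the probe satisfying $(\star)$ uniformly in the direction, and the measurability needed to run the two Tonelli applications and to make sense of shyness --- are missing; the statement remains, as in the paper, a conjecture.
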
 

The paper is organized as follows. Our method is based on wavelets,
and requires various notions of real and functional analysis. Section
\ref{sec:preliminaries} provides all the definitions and important
results needed to complete the proof of Theorem \ref{theo:mainth}. In
Section \ref{sec:proof-main-result}, we prove the upper bound for the
singularity spectrum for all functions in $\Besov{s}{p}{q}(\R^D)$, and
the lower bound for all functions in a set that we call
$\mathcal{F}$. Then, in Section \ref{sec:proof-keyresult}, we show
that this set $\mathcal{F}$ is prevalent, the main difficulties lying
in the measurability properties of
$\mathcal{F}$. Appendix~\ref{sec:proof-coroll-refc} contains a shorter
proof of Theorem \ref{theo:tracebesov}, and
Appendix~\ref{sec:proof-key-nonsep} deals with the universal
measurability of $\mathcal{F} $ in the case $q=+\infty$ 
(which differs from the case $q < +\infty$ since
$\Besov{s}{p}{\infty}(\R^D)$ is not separable).

\section{Preliminaries}
\label{sec:preliminaries}

\subsection{Dimensions} \ \mk
\label{sec:dimension}

Two notions of dimensions of sets in $\R^d$ will be used below: the
Hausdorff dimension and the upper box dimension. We recall them
quickly.

\mk

Let $E$ be a bounded set in $\R^d$. 
For every $\ep>0$, denote by $N_\ep(E)$ the minimal number of cubes of
size $\ep$ needed to cover the set $E$. The upper box dimension of
$E$, denoted by $\overline{\dim_B}(E)$, is the real number $\in [0,d]
$ defined as 
\begin{equation}
\label{defboite}
 \overline{\dim_B}(E) = \limsup_{\ep \to 0^+} \frac{\log N_\ep(E)}{-\log \ep}.
\end{equation}

For the reader's convenience we also recall the definition of the Hausdorff dimension.

\begin{definition}\label{Hausdorff}
Let $s\ge 0$. The $s$-dimensional Hausdorff measure of a set $E$,
$\mathcal{H}^s(E)$, is defined as
$$
\mathcal{H}^s(E)=\lim_{r\searrow 0}\mathcal{H}_r^s(E),\quad\mbox{with
} \mathcal{H}_r^s(E)=\inf\Big\{\sum_{i}|E_i|^s\Big \},
$$
the infimum being taken over all the countable families of sets $E_i$
such that $|E_i|\le r$ and $E\subset \bigcup_i E_i$. Then, the
Hausdorff dimension of $E$, $\dim_{\calH}\, E$, is defined as 
$$\dim_{\calH}\,
E=\inf\{s\ge 0: \mathcal{H}^s(E)=0\}=\sup\{s\ge 0: \mathcal{H}^s(E)=+\infty\}. $$
\end{definition}

For a bounded set $E\subset \R^d$, we have
$$
0\leq  \dim_{\calH} (E)\leq  \overline{\dim_B}(E) \leq d .$$


\subsection{Wavelets}  \ \mk

\label{sec:wavelets}

\newcommand{\prodprimePsi}{\prod_{i=1}^{d'} \Psi^{l'_i}_{j, k'_i}(a_i)}
\newcommand{\tPsi}{\widetilde{\Psi}}

We recall very briefly the basics of multiresolution wavelet analysis
(for details see for instance~\cite{daubechies92}).  For an arbitrary
integer $N \geq 1$ one can construct compactly supported functions
$\Psi^0 \in C^N(\R)$ (called the scaling function) and $\Psi^1 \in C^N(\R)$ (called the mother wavelet), with $\Psi^1 $ having at least  $N+1$ vanishing moments (i.e. $\int_\R
x^n \Psi^1(x) dx = 0$ for $n \in \{0, \dots, N\}$), and such that the set of functions
\begin{equation*}
 \Psi^1_{j, k}: x \mapsto \Psi^1(2^j x - k)
\end{equation*}
for $j \in \Z, k \in \Z$  form an orthogonal basis of $L^2(\R)$  (note
that we choose   the $L^\infty$ normalization, not~$L^2$).  In this case, the wavelet is said to be $N$-regular.   

\newcommand{\lambdaD}{\lambda^D} 

Let us introduce the notations
\begin{eqnarray*}
   0^d  := (0,0,\cdots , 0),  \ \ \   \ \ \    1^d  := (1,1,\cdots , 1),      \ \ \ \ \ \ \ \  L^d \deq \{0, 1\}^d \backslash 0^d.
\end{eqnarray*}

An orthogonal basis of $L^2(\R^d)$ is then obtained by tensorization. For every $\lambda \deq (j, {\bf k}, {\bf l}) \in \Z\times \Z^d \times \{0,1\}^d$, let us define the tensorized wavelet
\begin{equation*}
  \Psi_{\lambda}(x) \deq   \prod_{i=1}^{d}  \ \Psi^{l_i}_{j,k_i}(x_i) ,
\end{equation*}
 with obvious notations  that $\bfk=(k_1,k_2,\cdots, k_d)$  and $\bfl=(l_1,l_2,\cdots, l_d)$.

Any function $f\in L^2(\R^d)$ can be written (the inequality being true in $L^2(\R^d)$)
\begin{equation}
\label{decompf}
 f =    \ \sum_{\gla=(j,\bfk,\bfl): \, j\in \Z, \, \bfk \in \Z^d,  \, \bfl \in L^d} \ c_{\gla}  \Psi_{\gla } (x),
\end{equation}
where
\begin{equation}
\label{defcoeff}
c_{\gla}     :=     2^{jd} \int_{R^d} f(x)  \Psi_{\gla}(x) \, dx.
\end{equation}
It is implicit in \eqref{defcoeff} that the wavelet coefficients depend on $f$. Observe that in the wavelet decomposition \eqref{decompf}, no wavelet $\Psi_\gla$ such that  $\bfl =0^d$ (where $\gla=(j,\bfk,\bfl)$) appears.

  \mk

Similar notations (e.g.  $\lambda^D \deq (j, (\bfk,\bfk'), (\bfl,\bfl')) \in
\Z\times \Z^D \times  \{0,1\}^D$) with the straightforward modifications will
produce an orthogonal basis of $L^2(\R^D)$.  The wavelets and the corresponding wavelet coefficients in $L^2(\R^D)$ will be  denoted respectively by $ \Psi_{\glaD }$ and $c_{\gla^D} $.

\mk

In~\ref{sec:probe-space-1} we shall 
need to consider the 1-periodic function
\begin{equation}
  \label{eq:defGG}
  G :  t\in \R \longmapsto    \sum_{k \in \Z } \Psi^1(t  - k)  .\end{equation}
and 
make the technical hypothesis on $\Psi^1$:\smallskip
\begin{equation}\smallskip
  \!\!  \tag{$\calH_N$} \label{eq:hypH}
\left\{ \begin{array}{l}\smallskip
 \!\! \text{(i)  \ \ \,  $\Psi^1$ is $N$-regular,}\\ \smallskip
 \!\! \text{(ii) \ \ \   The set } Z \deq
  G^{-1}(\{0\}) \cap \zu \text{ is finite,}
 \\\smallskip
 \!\! \text{(iii) \ \   For every $t\in Z$, $|G^{'}(t)| >0$ .} 
\end{array}\right.
\end{equation}

This condition is very reasonable for a given wavelet $\Psi^1$.
Numerical simulations (see figure \ref{fig2}) indicate that
$(\calH_N)$ is verified for suitable choices of regular wavelets,
including in particular Daubechies's compactly supported
wavelets~\cite{daubechies92}. 
In Figure 2, the simulations of $\Psi^1$ and $(\Psi^1)'$ (computed
using the associated wavelet filters) are precise enough to guarantee
that $G'$ does not vanish around the zeros of $G$.

\begin{figure}
  \begin{center} 
    \includegraphics[width=5.5cm,height=5.5cm]{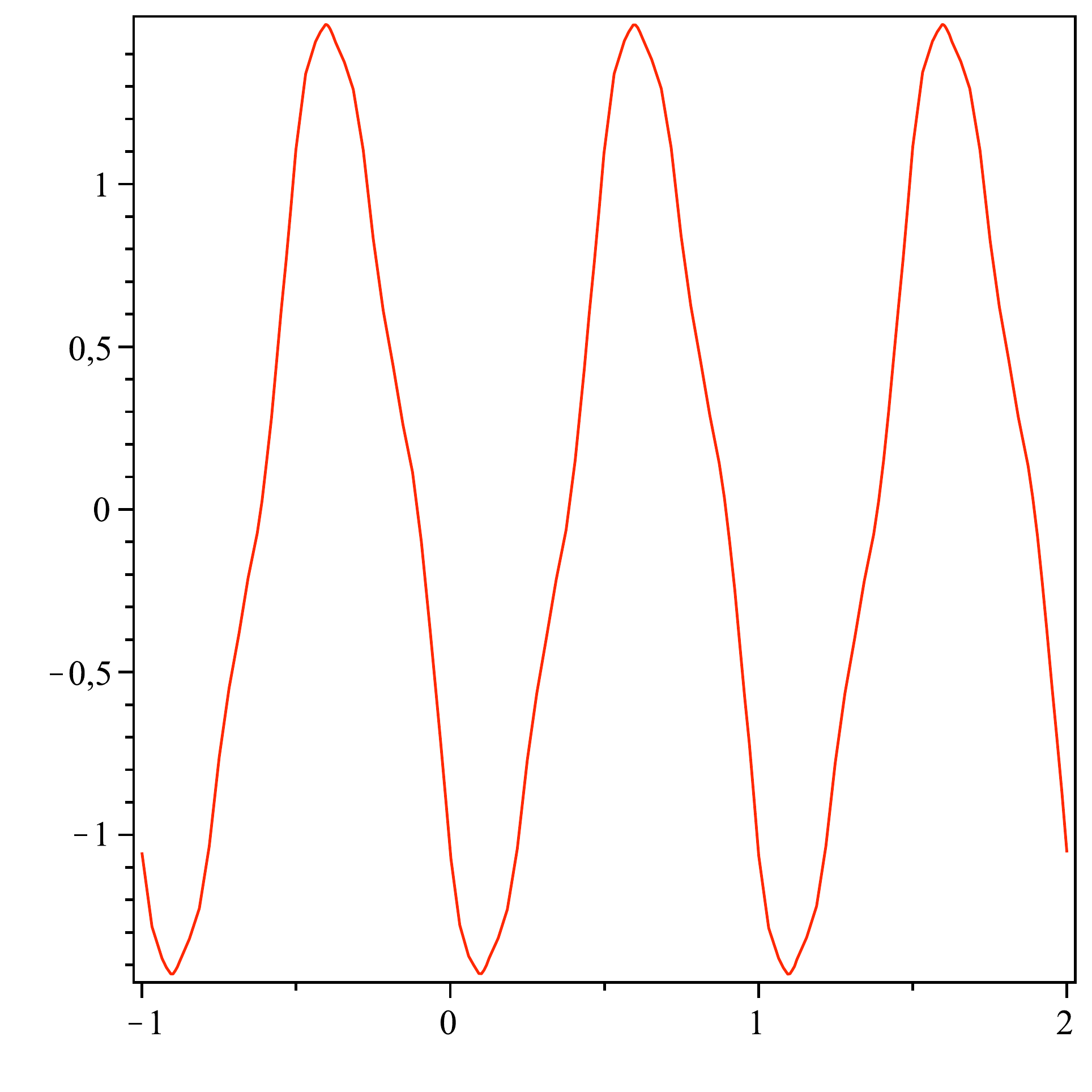}
    \includegraphics[width=5.5cm,height=5.5cm]{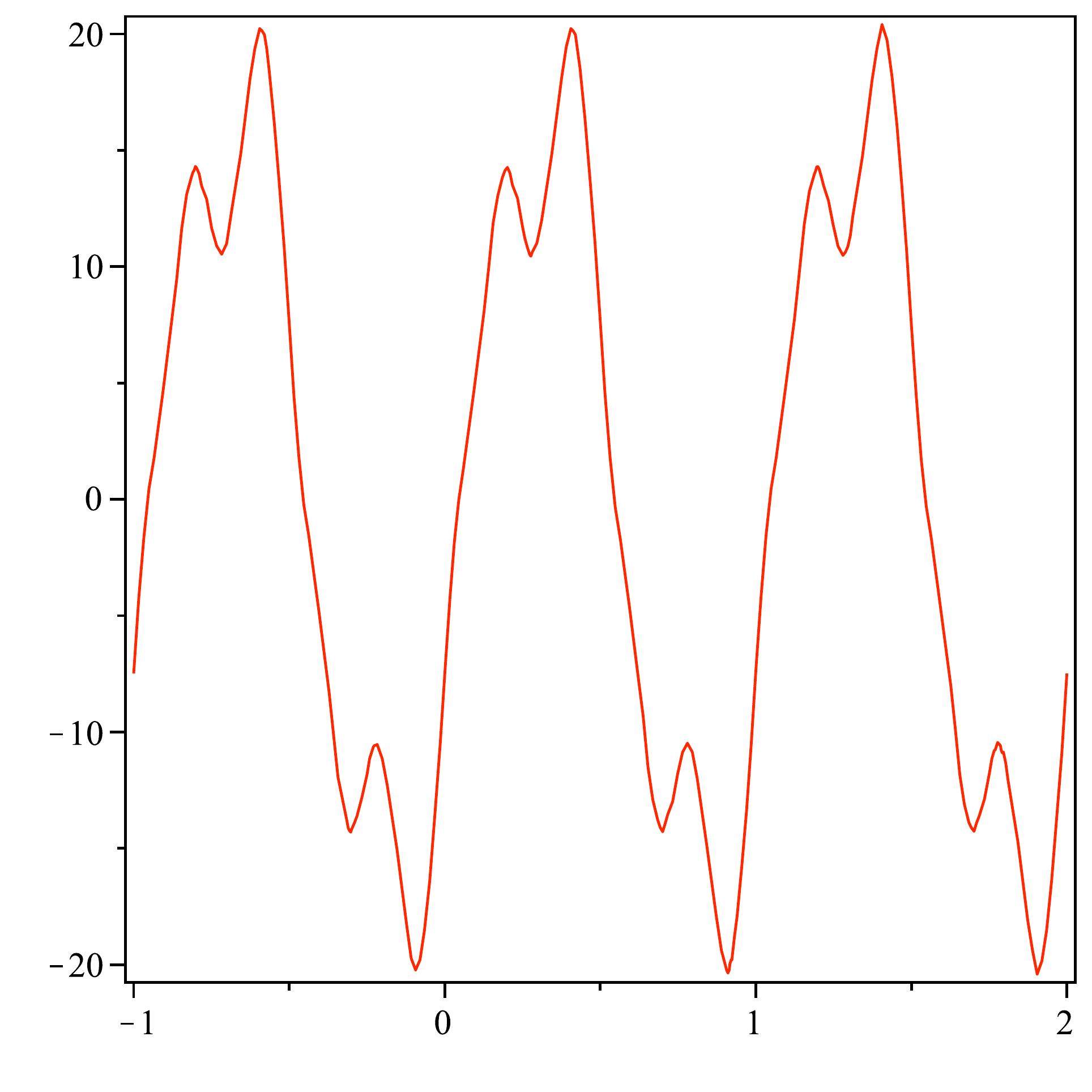}
    \caption{Plot of the periodized Daubechies wavelet with eight
      vanishing moments, and its derivative. The approximation is precise enough to ensure that $G$ and its derivative do not vanish at the same time.}
    \label{fig2}
  \end{center}
\end{figure}

\subsection{Localization of the problem}  \ \mk

\label{sec:period}

We will be first focusing on the local behavior of traces on $(0, 1)^d
\times \{a\}$, $a \in (0, 1)^{d'}$.  
As Proposition~\ref{caracpointwise} shows, if $f$ is written as
\eqref{decompf}, only the coefficients $c_\glaD$ such that $j \geq 0$
and $(\bfk 2^{-j},\bfk' 2^{-j}) \in \zu^D$ can play a role in the value of
the pointwise exponent $h_{f_a}(x)$.  For our purpose, we can identify
functions that have the same wavelet coefficients $c_\glaD$ when
$(\bfk 2^{-j},\bfk' 2^{-j}) \in \zu^D$.  Hence we will consider
functions $f$ of the form
\begin{equation}
  \label{decompf2}
  f = \sum_{\glaD \in \Lambda^D\times L^D} \ c_{\glaD}  \Psi_{\glaD } (x),
\end{equation}
where 
\begin{eqnarray*}
  \mbox{for $ j\geq 1$, }  \ \ \Z_j  & = &  \{ 0,1,\cdots, 2^j-1\} \ \
  \mbox{ and } \ \ \Lambda^D_j = \{j\}\times \Z_j^D\\ 
  \Lambda^D & = &  \bigcup_{j\geq 1} \Lambda^D_j .
\end{eqnarray*}
 
If we prove Theorem \ref{theo:mainth2} on $\zu^D$ instead of $\R^D$,
then by dilation it will be true on any cube $[-N, N]^D$.  Prevalence
results being stable by countable intersection on $N \in \N$, Theorem
\ref{theo:mainth2} will thus be obtained.

We shall present our results in this framework, and we will
effectively prove the following:
\begin{theorem}
  \label{theo:mainth}
  Let $0 < p < \infty$, $0 < q \leq \infty$.  Assuming that $s > d/p$,
  for almost all $f$ in $\Besov{s}{p}{q}(\zu^D)$, for almost all $a \in
  \zu^{d'}$, the following holds:
  \begin{enumerate}

  \item the spectrum of singularities of $f$ is
    \begin{equation}
      \label{eq:uno}
      d_{f_a}(h) =
      \begin{cases}
        d + (h-s) p &\text{ if } h \in [s-d/p, s] \\
        -\infty &\text{ else.}
      \end{cases}
    \end{equation}
    
  \item the level set $\hlevel{f_a}{s}$ has full Lebesgue measure
    in $\zu^d$.

  \end{enumerate}
\end{theorem}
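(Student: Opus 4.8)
The plan is to follow the strategy outlined in the introduction: first establish the upper bound for $d_{f_a}$ for \emph{every} $f\in\Besov{s}{p}{q}(\zu^D)$ and Lebesgue-almost every $a$; then the matching lower bound and assertion (ii) for every $f$ in a suitable set $\mathcal F$; and finally show that $\mathcal F$ is prevalent. Everything is carried out in the wavelet language of \S\ref{sec:wavelets}: writing $f=\sum_{\glaD}c_{\glaD}\Psi_{\glaD}$ as in \eqref{decompf2}, the trace $f_a$ has, at scale $j$ and horizontal position $\bfk$, a wavelet coefficient obtained by summing the ambient coefficients $c_{(j,(\bfk,\bfk'),(\bfl,\bfl'))}$ against the transverse weights $\prod_{i=1}^{d'}\Psi^{l'_i}_{j,k'_i}(a_i)$ — for fixed $a$ only $O(1)$ values of each $k'_i$ contribute, by compact support — and one reads $h_{f_a}(x)$ off these coefficients via Proposition~\ref{caracpointwise}. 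The key elementary computation is that $\sum_{k}\Psi^1_{j,k}(a_i)=G(2^ja_i)$ with $G$ the periodized wavelet of \eqref{eq:defGG}: this is where hypothesis $(\calH_N)$ is used, since it makes $|G(2^ja_i)|$ bounded below off a set of $a_i$ of arbitrarily small (and, at a summable rate, Borel--Cantelli-able) measure.

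\emph{Upper bound.} Let $f\in\Besov{s}{p}{q}(\zu^D)$ be arbitrary. By Theorem~\ref{theo:tracebesov} and the classical Besov embeddings \eqref{eq:Besovembed}, for Lebesgue-almost every $a$ the trace $f_a$ belongs to $\Besov{s'}{p}{\infty}(\R^d)$ for every $s'<s$. Applying Theorem~\ref{theo:upperboundebesov} to $f_a$ with exponent $s'$ and letting $s'\uparrow s$ yields $E_{f_a}(h)=\emptyset$ for $h<s-d/p$ and $d_{f_a}(h)\le d+(h-s)p$ on $[s-d/p,s]$; moreover the proof of Theorem~\ref{theo:upperboundebesov} controls the whole sublevel set, so $\dim_{\calH}\{x:h_{f_a}(x)\le h\}\le d+(h-s)p<d$ for every $h<s$. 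In particular $\{x:h_{f_a}(x)<s\}$ is a countable union of Lebesgue-null sets, so if we can separately ensure $h_{f_a}(x)\le s$ for all $x$ — which also kills $h>s$ — then assertion (ii) follows and the upper bound in (i) is complete.

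\emph{The set $\mathcal F$ and the lower bound.} I would define $\mathcal F\subset\Besov{s}{p}{q}(\zu^D)$ by explicit conditions on the ambient coefficients, \emph{not involving $a$}: a background layer, $|c_{\glaD}|\ge\eps_j2^{-sj}$ for a well-chosen wavelet index in every dyadic cube at scale $j$, with $\eps_j\downarrow0$ slowly enough to stay $q$-summable (so $f\in\Besov{s}{p}{q}$); and, at infinitely many scales $j$ and for each $\sigma\in[s-d/p,s]$, coefficients of size $\approx2^{-\sigma j}$ on all positions whose horizontal part lies in a fixed sparse set $S_j^{\sigma}\subset\Z_j^d$ with $\#S_j^{\sigma}\approx2^{(d+(\sigma-s)p)j}$ — note $(d+(\sigma-s)p)+d'=D+(\sigma-s)p$ is the largest exponent compatible with the Besov norm, and this ``sparse in the horizontal, full in the transverse'' shape is exactly what makes every transverse slice of the rich family coincide, up to the $G$-factor, with the fixed horizontal set $S_j^{\sigma}$. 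For $f\in\mathcal F$: using $\sum_k\Psi^1_{j,k}(a_i)=G(2^ja_i)$ and $(\calH_N)$ (whence Borel--Cantelli), the background survives in $f_a$ near every point for Lebesgue-almost every $a$, forcing $h_{f_a}(x)\le s$ for all $x$; with the previous paragraph this gives (ii) and kills $h>s$. For the lower bound in (i), for each $h\in[s-d/p,s]$ one uses the sets $S_j^{h}$ together with the Besov control of the competing coefficients to extract, again for a.e. $a$, a Cantor-like set $K_h(a)\subset\zu^d$ on which the trace wavelet leaders are $\approx2^{-hj}$ along a subsequence of scales and $\gtrsim2^{-sj}$ otherwise, so that $h_{f_a}\equiv h$ on $K_h(a)$, and a mass-distribution (Frostman) argument gives $\dim_{\calH}K_h(a)\ge d+(h-s)p$; hence $d_{f_a}(h)\ge d+(h-s)p$ and \eqref{eq:uno} holds. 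The only null set of excluded $a$ comes from the Borel--Cantelli step on $G$, which is the concrete counterpart of the Mattila-type slicing picture mentioned after Theorem~\ref{theo:mainth2}.

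\emph{Prevalence of $\mathcal F$ — the main obstacle.} Following the paper's plan, this is where the real work lies. (a) One exhibits a probe: a finite-dimensional subspace $V=\mathrm{span}(g_1,\dots,g_m)\subset\Besov{s}{p}{q}(\zu^D)$, each $g_i$ a lacunary tensorized-wavelet series carrying at a prescribed rate exactly the rich patterns of $\mathcal F$, arranged so that for \emph{any} $f$ and Lebesgue-almost every $(t_1,\dots,t_m)\in\R^m$ the perturbation $f+\sum_{i}t_ig_i$ lies in $\mathcal F$ — the point being that a generic linear combination can cancel neither the prescribed large coefficients nor the background, so only a measure-zero set of parameters fails, and $(\calH_N)$ again guarantees that these large coefficients genuinely appear in the traces for a.e. $a$. (b) One checks that $\mathcal F$ is universally measurable: when $q<\infty$ the space is separable and $\mathcal F$ is a countable combination of open and closed coefficient conditions, but the non-separable case $q=\infty$ needs the extra argument of Appendix~\ref{sec:proof-key-nonsep}. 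I expect step (b), together with the uniformity in the translate $f$ in step (a), to be the genuinely delicate points. Granting these, intersecting the upper bound (valid for all $f$, Lebesgue-a.e. $a$) with the lower bound and (ii) (valid on the prevalent set $\mathcal F$, Lebesgue-a.e. $a$) proves Theorem~\ref{theo:mainth}.
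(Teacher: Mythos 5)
Your high-level plan (upper bound for all $f$ via Theorems~\ref{theo:tracebesov} and \ref{theo:upperboundebesov}; lower bound and (ii) for a prevalent $\mathcal F$; trace formula $\sum_k\Psi^1(2^jt-k)=G(2^jt)$ together with hypothesis~\eqref{eq:hypH} and Borel--Cantelli to exclude a null set of $a$; probe $=$ finite-dimensional subspace of wavelet series) is the right skeleton, and your treatment of the upper bound matches the paper's Proposition~\ref{prop:upperbound}. But the way you propose to define $\mathcal F$ and to obtain the lower bound is genuinely different from the paper's route, and the deviation opens a real gap.

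You want $\mathcal F$ to consist of $f$ whose \emph{ambient} coefficients $c_{\glaD}$ carry explicit, $a$-independent patterns (background $\ge\eps_j2^{-sj}$ everywhere; and, at infinitely many scales, coefficients of size $\approx 2^{-\sigma j}$ on horizontal sets $S_j^\sigma$ of prescribed cardinality), and then to recover $d_{f_a}(h)\ge d+(h-s)p$ by building Cantor sets $K_h(a)$ with a Frostman measure. The problem is that this set $\mathcal F$ cannot be shown prevalent by a finite-dimensional probe. On a probe $V=\mathrm{span}(g_1,\dots,g_m)$, the coefficient at a fixed node is an affine function of $t\in\R^m$, so the bad set of $t$ for a single node is a thin slab. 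To force the lower bound $|c_\glaD(f+\sum t_ig_i)|\ge\eps_j2^{-sj}$ at \emph{all} prescribed nodes at scale $j$ one must union $\approx 2^{Dj}$ slabs of thickness $\approx\eps_j$, which is not summable over $j$; and the two-sided condition ``$\approx 2^{-\sigma j}$'' is even worse, since no finite perturbation can keep the arbitrary ambient $f$ from making coefficients too large. Your phrase ``only a measure-zero set of parameters fails'' is therefore too optimistic as stated; the counting does not close.

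The paper sidesteps this entirely. It does \emph{not} ask the probe to force coefficient sizes. The prevalent set is $\calF=\bigcap_n\calF_{\alpha_n}$ with
$$\calF_\alpha=\{f:\ \text{for a.e.\ }a,\ \forall x\in\calX^\alpha,\ h_{f_a}(x)\le H(\alpha)\},$$
i.e.\ a trace-level, $\limsup$-type condition: coefficients must only be ``not all small'' in cones above points of $\calX^\alpha$ at infinitely many scales. The shyness of the complement $\calO_{\gamma,N}$ is then reduced (Lemma~\ref{lemm:BaaNf}) to a union bound over one dyadic cube $\lambda_0$ per scale $j_0$: if $\beta,\widetilde\beta$ both lie in $\calB_{a,\lambda_0}$ then $|\beta_i-\widetilde\beta_i|\lesssim 2^{-(\gamma-H(\alpha))j_1}$ for every $i$, so $\Lebesgue_{d_1}(\calB_{a,\lambda_0})\lesssim 2^{-d_1(\gamma-H(\alpha))j_1}$, and the sum over the $2^{dj_0}$ cubes converges precisely because $J_0$ (hence $d_1=2^{dJ_0}$) is chosen so that $d-d_1\alpha(\gamma-H(\alpha))<0$ — see \eqref{choixj0}. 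That counting closes only because the condition is ``some $x$ in one cube'', not ``all prescribed nodes''. For the lower bound, instead of Cantor sets the paper uses the ubiquity measure $m_\alpha$ of Theorem~\ref{theo:ubiquite}: once $h_{f_a}\le H(\alpha)$ on $\calX^\alpha$ and once the deterministic bound~\eqref{equpper} gives $\dim_\calH\{h_{f_a}<H(\alpha)\}<d/\alpha$, the fact that $m_\alpha$ kills sets of dimension $<d/\alpha$ immediately yields $m_\alpha\{x\in\calX^\alpha:h_{f_a}(x)=H(\alpha)\}>0$, hence $d_{f_a}(H(\alpha))\ge d/\alpha$. No Frostman construction is needed, and crucially no \emph{upper} bound on coefficients has to be enforced by the probe — it is supplied for free by~\eqref{equpper}.

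In short: your outline correctly identifies the ingredients (trace formula, $(\calH_N)$, probe, separate measurability issue when $q=\infty$), but defining $\mathcal F$ via two-sided ambient coefficient conditions and proving the spectrum lower bound via Cantor/Frostman is not a viable substitute for the paper's $\calF_\alpha$-plus-ubiquity scheme; the prevalence step would not go through as you describe it.
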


\subsection{Characterization of local and global regularity properties
}  \ \mk

\label{sec:besov-spaces}

Let $0 < s < \infty$, $0 < p, q \leq \infty$.  Assume that the wavelet
$\Psi$ is at least $[s+1]$-regular.  The $\Besov{s}{p}{q}(\zu^D)$
Besov norm (quasi-norm when $p < 1$ or $q < 1$) of a function $f$ on
$\zu^D$ having wavelet coefficients $c_{\lambdaD}$ is defined as
\begin{equation}
  \label{eq:caracbesov}
  \norm{\Besov{s}{p}{q}}{f} = \biggparen{ \sum_{j\geq 1}
    \biggparen{2^{(sp-D) j} \sum_{(\bfk, \bfk') \in \Z_j^D} \abs{c_{\lambdaD}}^p
    }^{\frac{q}{p}}}^{\frac1q}
\end{equation}
with the obvious modifications when $p = \infty$ or $q = \infty$.  The
Besov space $\Besov{s}{p}{q} (\zu^D) $ is naturally the set of
functions with finite (quasi-)norm.  It is a complete metrizable
space, normed when $p$ and $q \geq 1$, separable when both are finite.

The following standard embeddings are easy to deduce
from~(\ref{eq:caracbesov}): 
  For any $0 < s < \infty$, $0 < p \leq \infty$, $0 < q < q' \leq
  \infty$, $\eps > 0$,
  \begin{equation}
    \label{eq:Besovembed}
    \Besov{s}{p}{q} (\zu^D)\hookrightarrow \Besov{s}{p}{q'}(\zu^D)
    \hookrightarrow 
    \Besov{s-\eps}{p}{q}(\zu^D)
  \end{equation}

\begin{remark}
  In contrast with Theorems~\ref{theo:tracebesov} and
  \ref{theo:upperboundebesov}, the prevalence result for a given $q <
  \infty$ cannot simply be deduced from the result for $q = \infty$
  (nor the other way round).  Indeed it can be shown that
  in~(\ref{eq:Besovembed}) each included space is shy in the next one.
\end{remark}

Let us finally recall the fundamental result linking pointwise
regularity and the size of wavelet coefficients, which justifies our
approach.
\begin{proposition} 
  \label{caracpointwise}
  Suppose that $\gamma > 0$ and the wavelet $\Psi$ is at least
  $[\gamma+1]$-regular.  Let $f : \zu^d \to \R$ be a locally bounded
  function with wavelet coefficients $\{c_\lambda\}$, and let $x \in
  \zu^d$.

  If $f \in C^\gamma(x)$, then there exists a constant $M < \infty$
  such that for all $\lambda=(j,\bfk,\bfl) \in \Lambda^d \times L^d$,
  \begin{equation}
    \label{eq:caracholder}
    \abs{c_\lambda} \leq M \paren{2^{-j} + \abs{x - k 2^{-j}} }^\gamma
    = M 2^{-j\gamma} (1+ \abs{2^jx - k } )^\gamma
  \end{equation}
  Reciprocally, if \eqref{eq:caracholder} holds true and if $f\in
  \bigcup_{\ep>0} \, C^\ep(\zu^d)$, then $f \in C^{\gamma -\eta}(x)$,
  for every $\eta>0$.
\end{proposition}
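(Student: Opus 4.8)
This is Jaffard's classical two‑sided wavelet characterization of pointwise Hölder regularity, and the plan is to treat the two implications separately.

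For the \emph{direct} implication, assume $f\in C^\gamma(x)$, so that there are $\rho>0$, a polynomial $P$ with $\deg P\le\floor{\gamma}$, and $C>0$ with $|f(y)-P(y-x)|\le C|y-x|^\gamma$ whenever $|y-x|\le\rho$; put $R:=f-P(\cdot-x)$, which is bounded on $\zu^d$ (say by $C_1$) since $f$ is locally bounded on a compact set. Because $\Psi$ is $[\gamma+1]$‑regular, every $\Psi_\lambda$ with $\bfl\in L^d$ has a mother‑wavelet factor annihilating all polynomials of degree $\le\floor{\gamma}$, hence $\int P(y-x)\Psi_\lambda(y)\,dy=0$ and, by \eqref{defcoeff}, $c_\lambda=2^{jd}\int R(y)\Psi_\lambda(y)\,dy$. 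With the $L^\infty$ normalization $\|\Psi_\lambda\|_{L^1}=2^{-jd}\|\Psi\|_{L^1}$, so $|c_\lambda|\le\|\Psi\|_{L^1}\,\sup_{\supp\Psi_\lambda}|R|$, and the estimate then follows from a near/far dichotomy on $\lambda=(j,\bfk,\bfl)$. Recalling that $\supp\Psi_\lambda$ lies in a ball of radius $\lesssim 2^{-j}$ about $k2^{-j}$: for the finitely many coarse scales $j\le j_0$, and for any $\lambda$ whose support meets the complement of the $\rho$‑ball around $x$, the crude bound $|c_\lambda|\le\|\Psi\|_{L^1}C_1$ already suffices because the right‑hand side of \eqref{eq:caracholder} is then bounded below; for the remaining $\lambda$ the local estimate gives $\sup_{\supp\Psi_\lambda}|R|\lesssim(2^{-j}+|x-k2^{-j}|)^\gamma$, which is exactly the desired bound. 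Taking $M$ to be the largest constant produced concludes this direction.

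For the \emph{reciprocal}, fix $\eta>0$ and set $\alpha:=\gamma-\eta$, $m:=\floor{\alpha}$, so $m\le\alpha<\gamma$; since $f\in C^{\ep}(\zu^d)$ for some $\ep>0$ its wavelet series converges uniformly, and I write $D_jf:=\sum_{\lambda=(j,\bfk,\bfl)}c_\lambda\Psi_\lambda$ for the scale‑$j$ block. Differentiating a tensorized wavelet produces a factor $2^{j|\beta|}$ and only $O(1)$ indices $\bfk$ contribute at each scale by compact support, so \eqref{eq:caracholder} yields $|\partial^\beta D_jf(x)|\lesssim 2^{j(|\beta|-\gamma)}$; as $|\beta|\le m<\gamma$ this is summable, so the order‑$m$ Taylor polynomials $P_j$ of $D_jf$ at $x$ sum to a genuine polynomial $P:=\sum_{j\ge1}P_j$ of degree $\le m$ with $P(0)=\sum_jD_jf(x)=f(x)$. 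To bound $|f(x+z)-P(z)|$ with $r:=|z|$ small, pick $J$ with $2^{-J}\sim r$ and split $f(x+z)-P(z)=\sum_{j\le J}\bigl(D_jf(x+z)-P_j(z)\bigr)+\sum_{j>J}\bigl(D_jf(x+z)-D_jf(x)\bigr)+\sum_{j>J}\bigl(D_jf(x)-P_j(z)\bigr)$. The low‑frequency block is controlled by Taylor's remainder, $|D_jf(x+z)-P_j(z)|\lesssim 2^{j(m+1-\gamma)}r^{m+1}$ (the relevant $\bfk$ satisfy $|2^jx-k|\lesssim 1$ there), which sums over $j\le J$ to $\lesssim r^{\gamma-\eta}$, the worst case being $\sim\log(1/r)\,r^\gamma$; the third block is $\lesssim\sum_{1\le|\beta|\le m}2^{J(|\beta|-\gamma)}r^{|\beta|}\lesssim r^\gamma$ by the same estimate on $\partial^\beta D_jf(x)$.

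The delicate term is the high‑frequency block $\sum_{j>J}\bigl(D_jf(x+z)-D_jf(x)\bigr)$: for the indices $\bfk$ contributing to $D_jf(x+z)$ one has $|2^jx-k|\sim 2^{j-J}$, so \eqref{eq:caracholder} only gives $|D_jf(x+z)|\lesssim r^\gamma$ with \emph{no} decay in $j$ — indeed the series need not even converge absolutely near $x$ — so one must genuinely exploit the a priori regularity. The idea is to couple \eqref{eq:caracholder} with the uniform bound $|c_\lambda|\lesssim 2^{-j\ep}$ furnished by $f\in C^{\ep}(\zu^d)$, using the former for $J<j\lesssim(\gamma/\ep)J$ and the latter for larger $j$; the two contributions sum to $\lesssim Jr^\gamma\lesssim\log(1/r)\,r^\gamma$. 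Altogether $|f(x+z)-P(z)|\lesssim\log(1/r)\,r^\gamma\le C_\eta\,r^{\gamma-\eta}$ for $r$ small, i.e. $f\in C^{\gamma-\eta}(x)$. \textbf{Main obstacle:} the direct implication is essentially routine (vanishing moments plus a near/far split); the real work is this last high‑frequency estimate, where the mandatory interpolation between the localized bound \eqref{eq:caracholder} and the global Hölder bound is precisely what forces the logarithmic loss, hence the exponent $\gamma-\eta$ rather than $\gamma$.
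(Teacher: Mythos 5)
The paper does not prove Proposition~\ref{caracpointwise}; it is stated as a recalled classical result (Jaffard's two-sided wavelet characterization of pointwise H\"older regularity), so there is no in-paper argument to compare against. Your blind reconstruction is the standard proof and is essentially correct.

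The direct implication is fine: since $\Psi$ is $[\gamma+1]$-regular, $\Psi^1$ kills monomials of degree $\leq [\gamma+1]\geq\floor{\gamma}$, so $c_\lambda=2^{jd}\int R\,\Psi_\lambda$ with $R=f-P(\cdot-x)$, and the near/far split between the $\rho$-ball where the local estimate applies and its complement (where the right-hand side of \eqref{eq:caracholder} is bounded below and the crude $L^\infty$ bound suffices) gives the stated inequality. For the reciprocal, the scheme — scale blocks $D_jf$, partial Taylor polynomials $P_j$ at $x$, the three-term split at $J\sim\log_2(1/r)$, and the interpolation in the high-frequency block between the cone estimate from \eqref{eq:caracholder} for $J<j\lesssim(\gamma/\ep)J$ and the global bound $\abs{c_\lambda}\lesssim 2^{-j\ep}$ beyond — is exactly right, and you correctly identify that this interpolation is where the uniform H\"older hypothesis is used and where the $\log(1/r)$ loss arises, forcing the exponent $\gamma-\eta$. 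Two small points worth spelling out: taking $m:=\floor{\gamma-\eta}$ (not $\floor{\gamma}$) is what guarantees $\deg P\leq\floor{\gamma-\eta}$ as required by the definition of $C^{\gamma-\eta}(x)$, and since $m+1\geq\gamma-\eta$ always, the low-frequency Taylor remainder still lands at $\lesssim r^{m+1}\lesssim r^{\gamma-\eta}$ when $m+1<\gamma$; and the Taylor expansion to order $m+1$ is legitimate because $m+1\leq[\gamma+1]=N$, which is precisely why the hypothesis requires $[\gamma+1]$-regularity. The coarse-scale/scaling-function piece of the localized decomposition \eqref{decompffinal2} is $C^\infty$ and harmless, but should be mentioned for completeness.
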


Finally, the notion of cone of influence will be needed later.

\begin{definition}
\label{defcone}
Let $L>0$. The cone of influence of width $L$ above $x \in \R^d$ is
the set of cubes $(j,\textbf{k},\bfl) \in \Lambda^d$  such that  
\begin{displaymath}
\vert x - {\textbf{k}} \,{2^{-j}}\vert\leq  {L}{2^{-j}}.
\end{displaymath}
\end{definition}

\subsection{Traces}  \ \mk

\label{sec:traces}

Recall that for $a \in \T^{d'}$ and $f$ continuous on $\T^D$, the
function $f_a$ is simply defined as $f_a(x) \deq f(x, a)$.  Moreover,
recall that $\gla=(j,\bfk,\bfl)$ with $j\in \N^*$, $\bfk \in \Z^d_j $
and $\bfl \in \{0,1\}^d$ and that $\glaD=(j,(\bfk,\bfk'),
(\bfl,\bfl'))$ with $j\in \N^*$, $\bfk \in \Z^d_j $, $\bfk' \in
\Z^{d'}_j $, $\bfl \in \{0,1\}^d$ and $\bfl' \in \{0,1\}^{d'}$. Using
the expansion \eqref{decompf2} of $f$ in the tensorized wavelet basis
$\accol{\Psi_{\lambda^D}}$, we have
\begin{align}
\nonumber
  f_a(x) &=  \sum_{\lambdaD \in \Lambda^D \times L^D} c_{\lambdaD}
  \prod_{i=1}^{d} \Psi^{l_i}_{j,k_i}(x_i) \prodprimePsi  \\
  \label{eq:trtrtr} &=  G_a(x) + F_a(x)   \end{align}
where
\begin{eqnarray}
  \label{eq:tracecoef}
  G_a(x) & := & \sum_{\lambda \in \Lambda^d \times 0^d }
  d_\lambda(a)   \Psi_\lambda(x) 
  \\
  \label{eq:tracecoef2} F_a(x)  & :=  &   \sum_{\lambda \in \Lambda^d
    \times L^d} d_\lambda(a) 
  \Psi_\lambda(x)
\end{eqnarray}
and for $\gla=(j,\bfk,\bfl) \in \Lambda^d\times \{0,1\}^d$, 
\begin{eqnarray}
  \label{eq:tracecoef4}  \mbox{if $\bfl=0^d$, } & d_\lambda(a)    \deq
  & \sum_{ \substack{ \glaD =(j,(\bfk,\bfk'),(0^d,\bfl')): \\   \bfk'
      \in \Z_j^{d'},  \,  \bfl' \in L^{d'}}} 
  c_{\lambdaD} \prodprimePsi.\\
 \label{eq:tracecoef3} \mbox{if $\bfl  \in L^d$,  } & d_\lambda(a)
 \deq  & \sum_{ \substack{\glaD =(j,(\bfk,\bfk'),(\bfl,\bfl')): \\
     \,   \,\bfk' \in \Z_j^{d'},  \, \bfl' \in \{0, 1\}^{d'}}} 
  c_{\lambdaD} \prodprimePsi.
\end{eqnarray}
Formula \eqref{eq:tracecoef2} indeed yields a wavelet decomposition of
the  function $F_a$, since the wavelets appearing in
\eqref{eq:tracecoef2}  form a wavelet basis of $L^2(\T^d)$ (if
completed by the  function $\Psi_{0,0^d,0^d}$). This is not the case
for the function $G_a$ with formula \eqref{eq:tracecoef}, since only
the scaling function  $\Psi^0$ is used in this
decomposition. Fortunately, we have the following standard result for
the Besov properties of a function $G_a$ defined through a formula
like \eqref{eq:tracecoef}. 

\begin{proposition}
  \label{proptrace1}
  If $s_0>0$, and $g(x) = \sum_{\lambda \in \Lambda^d \times 0^d}
  d_\lambda \Psi_\lambda(x)$ with $\{d_\gla\}$ satisfying
  \begin{equation}
    \label{maj0}
    \sup _{j \geq 1}  \ \ 2^{j( p_0 s_0-d )}  \left (
      \sum_{\gla=(j,{\bf k}, {\bf l}): \,  {\bf k} \in \Z_j^d,  \,
        {\bf l}=0^d} |d_\lambda|^p \right) < + \infty , 
  \end{equation}
  then $g\in B^{s_0}_{p_0,\infty}(\zu^d)$.
\end{proposition}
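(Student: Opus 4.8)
The plan is to set aside the fact that $g$ is expanded over scaling functions only, re-expand it in the genuine tensorised wavelet basis of $L^2(\zu^d)$, and bound the resulting wavelet coefficients scale by scale, so that the wavelet characterisation \eqref{eq:caracbesov} of the Besov (quasi-)norm applies (we assume, as throughout \S\ref{sec:besov-spaces}, that $\Psi$ is at least $\lfloor s_0+1\rfloor$-regular). The only structural input needed to compensate for the absence of vanishing moments of $\Psi^0$ is the multiresolution fact that the scale-$j$ part of $g$ carries no wavelet content at scales $\geq j$.

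Concretely, I would write $g=\sum_{j\ge 1}g_j$ with $g_j\deq\sum_{\bfk\in\Z_j^d}d_{(j,\bfk,0^d)}\,\Psi_{(j,\bfk,0^d)}$, so that each $g_j$ lies in the scale-$j$ approximation (scaling-function) space of the periodised multiresolution analysis. A tensorised wavelet $\Psi_\mu$ indexed by $\mu=(J,\bfk',\bfl)$ with $\bfl\in L^d$ and $J\ge j$ is orthogonal to that space, since it carries a factor $\Psi^1_{J,k'_i}$ that is orthogonal to all scaling functions at scales $\le J$; hence $c_\mu(g_j)=0$. Consequently, for every wavelet index $\mu=(J,\bfk',\bfl)$,
\[
  c_\mu(g)=\sum_{j>J}c_\mu(g_j).
\]

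The per-block estimate is elementary. From \eqref{maj0} and the bounded overlap of the supports of the ($L^\infty$-normalised) functions $\Psi_{(j,\bfk,0^d)}$ one obtains
\[
  \|g_j\|_{L^{p_0}(\zu^d)}^{p_0}\ \lesssim\ 2^{-jd}\sum_{\bfk\in\Z_j^d}|d_{(j,\bfk,0^d)}|^{p_0}\ \lesssim\ 2^{-jp_0s_0}.
\]
Since $|\Psi_\mu|\lesssim\mathbf{1}_{Q_\mu}$ with $Q_\mu$ a cube of side $\sim 2^{-J}$, formula \eqref{defcoeff} gives $|c_\mu(g_j)|\lesssim 2^{Jd}\int_{Q_\mu}|g_j|$, whence, by Hölder's inequality and the bounded overlap of the cubes $Q_\mu$, summing over $\bfk'$ and $\bfl$,
\[
  \sum_{\bfk'\in\Z_J^d,\ \bfl\in L^d}|c_\mu(g_j)|^{p_0}\ \lesssim\ 2^{Jd}\,\|g_j\|_{L^{p_0}(\zu^d)}^{p_0}\ \lesssim\ 2^{Jd}\,2^{-jp_0s_0}.
\]
Plugging this into $c_\mu(g)=\sum_{j>J}c_\mu(g_j)$, applying the triangle inequality in $\ell^{p_0}$ in the variables $(\bfk',\bfl)$, and summing the geometric series $\sum_{j>J}2^{-js_0}\lesssim 2^{-Js_0}$, I get $\sum_{\bfk',\bfl}|c_{(J,\bfk',\bfl)}(g)|^{p_0}\lesssim 2^{J(d-p_0s_0)}$, i.e. $\sup_{J\ge 1}2^{(p_0s_0-d)J}\sum_{\bfk',\bfl}|c_{(J,\bfk',\bfl)}(g)|^{p_0}<\infty$; by \eqref{eq:caracbesov} this is precisely $g\in\Besov{s_0}{p_0}{\infty}(\zu^d)$. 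The coarsest (scaling-function) component of $g$ contributes only finitely many bounded coefficients and is harmless; for $0<p_0<1$ the two triangle inequalities are replaced by the corresponding $p_0$-subadditivity of $\|\cdot\|_{\ell^{p_0}}^{p_0}$.

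\emph{Main obstacle.} The only step that is not routine wavelet bookkeeping is the vanishing of $c_\mu(g_j)$ for scales of $\mu$ at least $j$: this is what collapses the double sum to a single geometric series, which in turn converges precisely because $s_0>0$. Everything else — the $L^{p_0}$ control of $g_j$, the support and normalisation estimates, the bounded-overlap counting — is standard for compactly supported wavelets.
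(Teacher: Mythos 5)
Your decomposition $g = \sum_{j \geq 1} g_j$ with $g_j$ lying in the scale-$j$ approximation space, the key observation that $c_\mu(g_j) = 0$ for wavelet indices $\mu$ at scale $J \geq j$, and the resulting geometric-series summation are precisely the paper's route (the paper only sketches it: ``decomposing each scaling function $\Psi_\lambda$ on the wavelets of smaller frequencies''). For $p_0 \geq 1$ your argument closes correctly.

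For $p_0 < 1$, however, there is a genuine gap. Your central estimate
\[
\sum_{\bfk' \in \Z_J^d,\ \bfl \in L^d} |c_\mu(g_j)|^{p_0} \lesssim 2^{Jd}\,\|g_j\|_{L^{p_0}(\zu^d)}^{p_0}
\]
rests on the H\"older step $\int_{Q_\mu}|g_j| \leq |Q_\mu|^{1-1/p_0}\,\|g_j\|_{L^{p_0}(Q_\mu)}$, which is false for $p_0 < 1$: the exponent $1-1/p_0$ is negative and the inequality points the other way. Your closing parenthetical --- replacing the two triangle inequalities by $p_0$-subadditivity --- repairs only the later sum over $j > J$, not this step. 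A concrete failure in the Haar case with $d = 1$: take $g_j = \Psi^0_{j,0}$; then $\|g_j\|_{L^{p_0}}^{p_0} = 2^{-j}$ while $\sum_{k'}|c_{(J,k')}(g_j)|^{p_0} = 2^{(J-j)p_0}$, and the ratio $2^{(J-j)p_0}/(2^{J}\,2^{-j}) = 2^{(j-J)(1-p_0)} \to \infty$ as $j \to \infty$.

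The correct per-block bound for $p_0 < 1$ comes from $p_0$-subadditivity applied to the roughly $2^{(j-J)d}$ overlapping translates $\bfk$ and gives
\[
\sum_{\bfk',\bfl} |c_\mu(g_j)|^{p_0} \lesssim 2^{(J-j)d p_0}\sum_{\bfk}|d_{(j,\bfk,0^d)}|^{p_0} \lesssim 2^{(J-j)d p_0}\,2^{j(d-p_0 s_0)},
\]
and the resulting geometric series in $j > J$ converges only when $s_0 > d/p_0 - d$, not under the bare hypothesis $s_0 > 0$. (Indeed, for $p_0<1$ and $s_0 < d/p_0 - d$ the formal sum $g$ may not even have convergent wavelet coefficients, so some such strengthening of the hypothesis is unavoidable.) In the one place the paper actually invokes Proposition~\ref{proptrace1}, namely Appendix~\ref{sec:proof-coroll-refc}, one has $p_0 = p$ and $s_0 = s - \eps$ with $s > d/p$, so $s_0 > d/p_0 - d$ does hold for $\eps$ small and the corrected estimate closes the argument; but as written, your proof does not cover $p_0 < 1$ under the proposition's stated assumption.
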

Proposition \ref{proptrace1} entails that the same Besov
characterization as \eqref{eq:caracbesov} when one considers only
scaling functions. The proof of Proposition \ref{proptrace1}, that we
do not reproduce here, 
consists of decomposing each scaling function
$\Psi_\gla$, for $\gla = (j,\bfk,0^d)$ on the wavelets of smaller
frequencies, i.e. on $\Psi_{\widetilde\gla}$ with $\widetilde\gla =
(\widetilde j,\widetilde\bfk, \widetilde \bfl )$ such that $\widetilde
j \leq j $ and  $\widetilde \bfl \in L^d$.

\smallskip


As a conclusion, the trace $f_a$ can be written
\begin{equation}
  \label{decompffinal}
  f_a  =   \sum_{\lambda \in \Lambda^d \times \{0,1\}^d} d_\lambda(a)
  \Psi_\lambda(x)
\end{equation}
where for $\gla=(j,\bfk,\bfl) \in \Lambda^d\times \{0,1\}^d$,
$d_\gla(a)$ is given by \eqref{eq:tracecoef4} and
\eqref{eq:tracecoef3}. For such a decomposition, the Besov
characterization \eqref{eq:caracbesov} holds true, the difference with
\eqref{decompf} is that the sum over $\gla\in L^d$ is replaced by
$\gla\in \{0,1\}^d$.

Recalling now Theorem \ref{theo:tracebesov} (proved in
Appendix~\ref{sec:proof-coroll-refc}), $f_a \in \bigcap_{\ep>0}
\Besov{s-\ep}{p}{\infty}(\zu^d)$ for Lebesgue-almost every
$a\in\zu^d$.  Hence, still for almost every $a$, we can consider the
{\em effective} wavelet decomposition of $f_a$ on the wavelet basis
provided by \eqref{decompf}, and we write
\begin{equation}
  \label{decompffinal2}
  f_a  =   c_{0,0^d,0^d } \Psi_{0,0^d,0^d}(x) +  \sum_{\lambda \in
    \Lambda^d \times L^d} c_\lambda(a) 
  \Psi_\lambda(x).
\end{equation}

We will use both forms \eqref{decompffinal} and \eqref{decompffinal2}.

\subsection{Dyadic approximation}  \ \mk

\label{sec:dyadic-approximation}
 
Let $B(x, r)$ denote the closed $l^\infty$ ball of radius $r$ around
$x$ in $\T^d$.  For $\alpha \geq 1$ and $j \in \N$, let
\begin{eqnarray*}
  \calX^{\alpha}_{j}  & \deq &  \bigcup_{k\in\Z_j^d} B(k 2^{-j}, 2^{-j
    \alpha})\\ 
  \mbox{and } \ \ \ \ 
  \calX^{\alpha}  & \deq &   \limsup_{j\rightarrow\infty} \calX^{\alpha}_{j}
\end{eqnarray*}
The set $ \calX^{\alpha} $ is constituted by points in $\T^d$ that are
approached at rate at least $\alpha$ by dyadics.  In other words, $x
\in \calX^\alpha$ if and only if there exists a sequence $(J_{n},
K_{n})_{n\geq 1} \in \Lambda^d$ such that ${J}_n \to +\infty$ and for
all $n \in \N$
\begin{equation}
  \label{eq:eq311}
   \abs{x - K_n 2^{-J_n} } \leq {2^{- \alpha J_n}}. 
\end{equation}  
Observe that $\calX^1 = \T^d$ and if $\alpha \leq \alpha'$ then
$\calX^{\alpha'} \subset \calX^{\alpha}$. Observe also that if $x \in
\calX^\alpha$ is not itself a dyadic, then the sequence $(J_{n},
K_{n})$ can be chosen so that for every $n$ the fraction
$\frac{\bfK_n}{2^{J_n}} $ is irreducible.  We call $(J_{n},
K_{n})_{n\geq 1} $ an {\em irreducible} sequence.

  About the  dimension of $\calX^\alpha$, a well know
result (for instance proved in ~\cite{Fa1}) states:
    
\begin{theorem}
  \label{theo:ubiquite}
  There exists a positive $\sigma$-finite measure $m_\alpha$ carried by
  $\calX^\alpha$ and such that any set $E$ having Hausdorff dimension
  $\dim_{\calH}(E) < \frac{d}{\alpha}$ has measure $m_\alpha(E)=0$.

  In particular, $m_\alpha(\calX^\alpha)>0$ and $\dim_\calH
  \calX^\alpha = d/\alpha$. 
\end{theorem}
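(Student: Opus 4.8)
The plan is to establish the two bounds $\dim_{\calH}\calX^\alpha\le d/\alpha$ and $\dim_{\calH}\calX^\alpha\ge d/\alpha$ separately, producing the measure $m_\alpha$ along the way by an explicit Cantor-type subconstruction of $\calX^\alpha$. Throughout I assume $\alpha>1$; for $\alpha=1$ the statement is trivial since $\calX^1=\T^d$ and $m_1$ may be taken to be Lebesgue measure. The upper bound is immediate from the natural cover: for every $J$ one has $\calX^\alpha\subseteq\bigcup_{j\ge J}\bigcup_{k\in\Z_j^d}B(k2^{-j},2^{-j\alpha})$, a cover by $\sum_{j\ge J}2^{jd}$ cubes of diameter $2\cdot 2^{-j\alpha}$; hence for any $t>d/\alpha$, $\mathcal{H}^t(\calX^\alpha)\le 2^t\sum_{j\ge J}2^{(d-t\alpha)j}\to 0$ as $J\to\infty$ (convergent geometric series), so $\dim_{\calH}\calX^\alpha\le d/\alpha$.

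For the lower bound and the measure I would fix a very rapidly increasing integer sequence $1\le j_1<j_2<\cdots$ (constrained below) and build a decreasing family of finite unions of closed $\ell^\infty$-cubes $\bigcup\mathcal{Q}_1\supset\bigcup\mathcal{Q}_2\supset\cdots$, where $\mathcal{Q}_n$ is a collection of ``generation-$n$ cubes'' $B(k2^{-j_n},2^{-j_n\alpha})$ with $k\in\Z_{j_n}^d$, of side $\ell_n:=2\cdot 2^{-j_n\alpha}$, keeping inside a given generation-$(n-1)$ cube exactly those generation-$n$ cubes whose centre lies well inside it. Writing $S_n:=\sum_{m\le n}j_m$, a generation-$(n-1)$ cube then contains $\Theta\bigl(2^{(j_n-\alpha j_{n-1})d}\bigr)$ pairwise disjoint generation-$n$ cubes, so $N_n:=\#\mathcal{Q}_n=\Theta\bigl(2^{d(j_n-(\alpha-1)S_{n-1})}\bigr)$. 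The compact set $\mathcal{C}:=\bigcap_n\bigcup\mathcal{Q}_n$ is contained in $\calX^\alpha$, since any $x\in\mathcal{C}$ lies within $2^{-j_n\alpha}$ of a dyadic point $k_n2^{-j_n}$ for every $n$, with $j_n\to\infty$. Then $m_\alpha$ will be the usual mass distribution on $\mathcal{C}$, namely the weak-$*$ limit of the measures assigning total mass $1/N_n$, spread uniformly, to each generation-$n$ cube; it is a Borel probability measure carried by $\mathcal{C}\subset\calX^\alpha$, in particular positive and $\sigma$-finite.

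The core of the argument is the local density estimate for $m_\alpha$. A routine ball count gives, for $x\in\mathcal{C}$: $m_\alpha(B(x,r))\asymp 1/N_n$ when $\ell_n\le r<2^{-j_n}$ (there $B(x,r)$ meets $O(1)$ generation-$n$ cubes), and $m_\alpha(B(x,r))\asymp r^d 2^{dj_n}/N_n=r^d\,2^{d(\alpha-1)S_{n-1}}$ when $2^{-j_n}\le r<\ell_{n-1}$ (there $B(x,r)$ meets $\asymp (r2^{j_n})^d$ generation-$n$ cubes inside a single generation-$(n-1)$ cube). On each such range $\log m_\alpha(B(x,r))/\log r$ is monotone, so its liminf as $r\to 0$ is attained, up to $o(1)$, along $r=\ell_n$, where it equals $\frac{d(j_n-(\alpha-1)S_{n-1})}{\alpha j_n}=\frac{d}{\alpha}\bigl(1-(\alpha-1)S_{n-1}/j_n\bigr)$, while between $\ell_n$ and $\ell_{n-1}$ it rises to about $d$. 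Hence, provided the $j_n$ grow fast enough that $S_{n-1}/j_n\to 0$ (e.g. $j_n=2^{2^n}$), $\liminf_{r\to 0}\log m_\alpha(B(x,r))/\log r=d/\alpha$ for every $x\in\mathcal{C}$. By the standard form of the mass distribution principle relating the lower local dimension of a measure to the Hausdorff dimension of its positive-measure sets, every Borel $E$ with $m_\alpha(E)>0$ then satisfies $\dim_{\calH}E\ge d/\alpha$; equivalently $\dim_{\calH}E<d/\alpha\Rightarrow m_\alpha(E)=0$, which is the asserted property. Since $m_\alpha(\calX^\alpha)\ge m_\alpha(\mathcal{C})=1>0$, this also gives $\dim_{\calH}\calX^\alpha\ge d/\alpha$, and with the upper bound, equality.

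The main obstacle, and the reason a plain Frostman argument does not suffice, is that $m_\alpha$ is genuinely not Ahlfors $(d/\alpha)$-regular: at the scales $r=\ell_n$ the ratio $m_\alpha(B(x,r))/r^{d/\alpha}$ is $\asymp 2^{d(\alpha-1)S_{n-1}}\to\infty$, so no uniform bound $m_\alpha(B(x,r))\lesssim r^{d/\alpha}$ can hold. One must instead control the \emph{liminf} of the local dimension, and the whole point of forcing the scales to be lacunary with $S_{n-1}\ll j_n$ is to make these ``bad'' scales rare enough that this liminf is still exactly $d/\alpha$. (This theorem is a form of the Jarník–Besicovitch theorem and could alternatively be deduced from the Mass Transference Principle, but the direct Cantor construction above is the most self-contained route.)
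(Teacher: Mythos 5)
Your proof is correct, but note that the paper itself does not prove Theorem~\ref{theo:ubiquite}: it cites it as a well-known result and refers to Falconer~\cite{Fa1}. Your argument -- building a Cantor-like subset $\mathcal{C}\subset\calX^\alpha$ by retaining, at a lacunary sequence of dyadic scales $j_n$, the balls $B(k2^{-j_n},2^{-j_n\alpha})$ nested inside the previous generation, equipping it with the natural mass distribution, estimating the lower local dimension, and invoking the Billingsley/mass-distribution principle -- is precisely the standard route, and essentially the one found in the cited reference. The upper bound $\dim_{\calH}\calX^\alpha\le d/\alpha$ via the natural limsup cover is also correct. Two small technical points worth tightening if this were to be written out in full: (a) the ``weak-$*$ limit of measures assigning $1/N_n$ to each generation-$n$ cube'' is only well-defined if the splitting is Kolmogorov-consistent; since different generation-$(n-1)$ cubes may contain slightly different numbers of generation-$n$ subcubes, one should either trim each parent to a fixed number of children or, more simply, define the mass hierarchically by splitting each parent's mass equally among its children (the local estimates are unchanged up to constants); (b) the local-dimension computation should make explicit that the quantity being bounded below is $\liminf_{r\to 0}\log m_\alpha(B(x,r))/\log r$, which is exactly what the mass-distribution principle needs, and your observation that $m_\alpha$ is far from Ahlfors regular so a naive Frostman bound fails is the right thing to flag. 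Neither is a real gap. One could also, as you note, deduce the result from the general ubiquity framework or the Mass Transference Principle, which is slightly more machinery than necessary for this dyadic case.
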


\subsection{Prevalence, universal measurability, analytic sets} \ \mk
\label{sec:prevalence}

In the Definition \ref{defprevalence} of the prevalence in a complete
metric space $E$, a set $B\subset E$ needs to be universally
measurable to be shy or prevalent (this includes the Borel sets). One
main difficulty occurring in the proof of Theorem \ref{theo:mainth2}
lies in the universal measurability property of subsets of $E$ for
which we aim to prove a prevalence property. Indeed, these sets will
be defined through complicated formulas, not easily tractable. In
particular, these subsets of $E$ can often be viewed as continuous
images of Borel sets.

When $E$ is a Polish space (this is the case for $B^s_{p,q}(\R^D)$
when $q<\infty$), such sets are called \emph{analytic}, and we have
the following theorem \cite{Choquet:1954uo}.

\begin{theorem}
  \label{theo:unimeasurable}
  Every analytic set in a Polish space is universally measurable.
\end{theorem}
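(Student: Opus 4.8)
The plan is to reduce the statement to the Choquet capacitability theorem, which is the standard route. First I would recall the relevant definitions precisely: in a Polish space $E$, an \emph{analytic} set is a continuous image of a Polish space (equivalently, of a Borel subset of a Polish space, or a projection of a closed subset of $E \times \mathcal{N}$ where $\mathcal{N} = \N^\N$ is Baire space). Fix an arbitrary completed Borel measure $\mu$ on $(E, \Borel(E))$; I want to show that every analytic $A \subset E$ lies in the $\mu$-completion of $\Borel(E)$, i.e.\ there exist Borel sets $B_1 \subset A \subset B_2$ with $\mu(B_2 \setminus B_1) = 0$. It suffices to produce, for each analytic $A$ and each $\eps > 0$, a closed set $F \subset A$ with $\mu(A) - \mu(F) < \eps$ (inner regularity by closed sets), since then taking a countable union of such sets over $\eps = 1/n$ gives an $F_\sigma$ set $B_1 \subset A$ with $\mu(B_1) = \mu^*(A)$ (the outer measure), and a symmetric argument applied to reach outer regularity, or a direct argument on $\mu^*$, yields the enveloping $B_2$; combined with completeness of $\mu$ this gives $A$ in the completion.

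Next I would invoke the machinery of capacities. Define, for $X \subset E$, the set function $I(X) = \mu^*(X)$, the outer measure induced by $\mu$. One checks that $I$ is a \emph{Choquet capacity} relative to the paving of compact subsets of $E$: it is monotone, it is continuous along increasing sequences ($X_n \uparrow X \Rightarrow I(X_n) \uparrow I(X)$, which is continuity from below of outer measure), and it is continuous from above along decreasing sequences of compact sets ($K_n \downarrow K \Rightarrow I(K_n) \downarrow I(K)$, which follows because the $K_n$ are compact and $\mu$ is, by Ulam's theorem on Polish spaces, inner regular by compact sets on Borel sets — here one uses that $E$ is Polish). Then the Choquet capacitability theorem states that every analytic set $A$ (more precisely every $\mathcal{K}$-analytic, i.e.\ $\mathcal{K}_{\sigma\delta}$-Souslin, set) is \emph{capacitable}: $I(A) = \sup\{ I(K) : K \subset A, \ K \text{ compact}\}$. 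Since $K \subset A$ compact implies $K$ Borel with $\mu(K) = I(K)$, this is exactly the inner approximation by closed (indeed compact) sets that I need, and the measurability conclusion follows as above. Finally, since the completed measure $\mu$ was arbitrary, $A$ is universally measurable.

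The main obstacle — and the only place where real content enters — is verifying the hypotheses of the Choquet capacitability theorem, specifically that the compact sets form an appropriate paving closed under finite unions and countable intersections relative to which $\mu^*$ behaves like a capacity, and that analytic sets in the descriptive-set-theoretic sense coincide with the Souslin operation applied to that compact paving. This last point uses crucially that $E$ is Polish: a continuous image of a Polish space into $E$ can be written as $\mathcal{A}(\{F_{n_1\cdots n_k}\})$, the Souslin operation applied to a regular Souslin scheme of closed (hence, after intersecting with a suitable compactification argument or directly via $\sigma$-compactness considerations, essentially compact) sets. I would therefore structure the write-up as: (1) recall capacities and the capacitability theorem as a black box with citation to \cite{Choquet:1954uo}; (2) verify $\mu^*$ is a capacity using Ulam's compact-inner-regularity on Polish spaces; (3) represent analytic sets via the Souslin operation on compact pavings; (4) conclude capacitability, hence inner and outer Borel approximation, hence membership in the completion of every Borel measure. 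If a fully self-contained treatment is desired one would also need to prove the capacitability theorem itself, but given the phrasing "we have the following theorem \cite{Choquet:1954uo}" I expect the intended proof is precisely this short reduction.
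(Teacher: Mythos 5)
The paper does not prove Theorem~\ref{theo:unimeasurable} at all: it is quoted as a classical result with a citation to Choquet's article, and Appendix~B reiterates that it ``is based on Choquet's capacitability theorem.'' Your outline is precisely the standard reduction to Choquet capacitability that this citation points to, so it is consistent with the paper's intent. One small misattribution in your sketch: the continuity of $\mu^*$ from above along decreasing sequences of compact sets, $K_n \downarrow K \Rightarrow \mu^*(K_n)\downarrow\mu^*(K)$, is plain measure continuity on Borel sets (granting $\mu(K_1)<\infty$) and has nothing to do with Ulam's inner-regularity theorem; likewise continuity from below of $\mu^*$ on arbitrary increasing sequences is a regularity property of the outer measure that needs no Polish input. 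Where Polishness of $E$ genuinely enters is the point you already identify as the crux, namely representing an analytic set as the result of the Souslin operation applied to a paving of compact (or semicompact) sets so that the capacitability theorem applies; that step is the real content, and a complete write-up should either carry it out (via a closed scheme in $E\times\N^\N$ and a semicompactness argument, or via an embedding into a metrizable compactification) or, as the paper does, defer it entirely to the reference.
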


When $E$ is not Polish (in our context, when
$E = B^s_{p,\infty}(\R^D)$), continuous images of Borel sets need not be
universally measurable. Hence, in order to obtain the universal
measurability for our specific sets, the definition of an analytic set
has to be modified and is more complicated (see
\S~\ref{sec:analytic-sets}). Once this second definition is adopted,
the same result as Theorem \ref{theo:unimeasurable} holds,
i.e. analyticity implies universal measurability. The fact that the
sets we will meet indeed satisfy this second definition of analytic
set is proved in \S~\ref{sec:measurability-nonsep}.

\section{Proof of Theorem \ref{theo:mainth}}
\label{sec:proof-main-result}

\subsection{A first property of the wavelet}  \ \mk

\label{sec:hypotheseH}

Recall the definition    \eqref{eq:defGG} of the periodized wavelet
$G$ and  let us introduce its $d'$-dimensional version $G_{d'}$
defined as 
\begin{equation}
\label{defgdprime}
    G_{d'}:  x \in \R^{d'} \longmapsto    G(x_1)\cdot G(x_2) \cdots G(x_{d'}).
    \end{equation}

\begin{proposition}
  \label{prop:protr1}
  If $\Psi$ satisfies  \eqref{eq:hypH}, then the set 
  \begin{displaymath}
    \Aun \deq \set{a\in \T^{d'}}{\exists \, j_a
      \in \N \textrm{ such that } \,\forall j \geq j_a, \, 
      \abs{G_{d'}(2^j a)} >   j^{- 2d' } }
  \end{displaymath}
  has full Lebesgue measure.
\end{proposition}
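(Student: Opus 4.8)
The statement is a classical "almost every point is well approximable from below" / metric number theory fact, one-dimensional first and then tensorized. The key observation is that $t \mapsto G(t)$ is a fixed, nonzero, $1$-periodic $C^N$ function (nonzero because $\Psi^1$ has integral $1$ after suitable normalization, or at least because $G$ cannot vanish identically as the $\Psi^1_{j,k}$ form a basis), and hypothesis $(\calH_N)$ tells us precisely that its zero set $Z \subset [0,1)$ is finite and that $G$ vanishes there only to first order. Consequently, near each zero $t_* \in Z$ we have $|G(t)| \geq c\,|t - t_*|$ for $t$ in a neighborhood, and away from a neighborhood of $Z$ we have $|G| \geq c' > 0$. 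Combining, there is a constant $c>0$ with
\begin{equation*}
  |G(t)| \geq c \cdot \mathrm{dist}(t, Z + \Z) \qquad (t \in \R).
\end{equation*}

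**One-dimensional estimate.** Using this, $|G(2^j a)| \le j^{-2d'}$ (for a fixed coordinate this is the bad event; but since we will multiply $d'$ of them, I will actually prove the stronger statement that, for almost every $a\in\T$, $|G(2^ja)| > j^{-2}$ eventually, and then tensorize). The set of $a \in \T$ with $|G(2^j a)| \leq j^{-2}$ is contained in $\{a : \mathrm{dist}(2^j a, Z+\Z) \leq j^{-2}/c\}$, which is a union over the finitely many $t_* \in Z$ and over $k \in \{0,\dots,2^j-1\}$ of intervals of length $\lesssim 2^{-j} j^{-2}$ around $(t_* + k)2^{-j}$. Hence its Lebesgue measure is $\lesssim \#Z \cdot j^{-2}$, which is summable in $j$. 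By Borel–Cantelli, for Lebesgue-almost every $a \in \T$ there exists $j_a$ such that $|G(2^j a)| > j^{-2}$ for all $j \geq j_a$.

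**Tensorization.** For $a = (a_1,\dots,a_{d'}) \in \T^{d'}$, apply the one-dimensional result in each coordinate: for Lebesgue-almost every $a$ (a finite intersection of full-measure sets, using Fubini), there is $j_a = \max_i j_{a_i}$ such that for all $j \geq j_a$ and all $i$, $|G(2^j a_i)| > j^{-2}$; therefore
\begin{equation*}
  |G_{d'}(2^j a)| = \prod_{i=1}^{d'} |G(2^j a_i)| > (j^{-2})^{d'} = j^{-2d'},
\end{equation*}
which is exactly the defining condition of $\Aun$. Hence $\Aun$ has full Lebesgue measure in $\T^{d'}$.

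**Where the work is.** The only genuine content is the lower bound $|G(t)| \geq c\,\mathrm{dist}(t, Z+\Z)$, which is where $(\calH_N)$ enters: finiteness of $Z$ lets one cover $Z$ by finitely many intervals, the first-order vanishing $|G'(t_*)|>0$ gives the linear lower bound on each such interval (Taylor with the $C^1$, indeed $C^N$, regularity from $(\calH_N)(\mathrm{i})$), and compactness of the complement gives a uniform positive lower bound there. Everything after that is the standard Borel–Cantelli computation, for which the exponent $2d'$ (in fact anything giving a summable series, i.e. power $>1$ per coordinate after raising to the $1/d'$) is comfortably enough. I do not anticipate a real obstacle, only the bookkeeping of constants; I would state the distance lower bound as a short lemma and then run Borel–Cantelli.
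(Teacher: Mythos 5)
Your proof is correct and follows essentially the same route as the paper's: reduce to the one-dimensional statement with threshold $j^{-2}$, use $(\calH_N)$ (finite zero set, nonvanishing derivative there, compactness of the complement) to bound the Lebesgue measure of the bad set $\{a : |G(2^j a)| \le j^{-2}\}$ by $O(j^{-2})$ via $1$-periodicity, apply Borel--Cantelli, and tensorize. Packaging the hypothesis as the lower bound $|G(t)| \ge c\,\mathrm{dist}(t, Z+\Z)$ is a cleaner way of stating what the paper proves inline, but it is the same argument.
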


Remark that Proposition \ref{prop:protr1}  holds  in fact  for any function $G \in C^N(\R)$ satisfying assumptions (ii) and (iii) of \eqref{eq:hypH}.

\begin{proof}
Obviously, if we are able to prove that   the set 
 \begin{displaymath}
    \Aun' \deq \set{a_1\in \T }{\exists \, j_a
      \in \N \textrm{ such that } \,\forall j \geq j_a, \, 
      \abs{G (2^j a_1)} >   j^{-2  } }
  \end{displaymath}
has full Lebesgue measure in $\zu$, then Proposition \ref{prop:protr1} will be proved  since we have the inclusion $   (\Aun')^{d'} \subset    \Aun$.

\mk
   
By \eqref{eq:hypH}, there is a finite number, say $y_1$, $y_2$, $\cdots$, $y_p$, of  zeros of $G$ on the interval $\zu$, and $G'$ does not vanish at these real numbers. Let $M=\min(|G'(y_1)|,|G'(y_2)|, \cdots, |G'(y_p)|)$. For each $y_i$ there is a small interval $[y_i-r_i,y_i+r_i]$ around $y_i$ on which $|G(a)|\geq M/2 |a-y_i|  $.

Let $r=\min(r_i: i=1,..., p)$.


Let us denote by $m$ the minimum of $G$ on the compact set $\zu\setminus \bigcup_{i=1}^p (y_i - r,y_i+r)$. We now choose an integer $n_1 $ such that $1/n_1 \leq \min(m,r)$.

\mk

The above construction guarantees that for every  integer $n\geq n_1$, for every $a\notin \bigcup_{i=1}^p [ y_i - \frac{2}{M\cdot n }    ,y_i+  \frac{2}{M\cdot n } ]$, we have $|G(a)|\geq 1/n$. In other words, $|G(a)| < 1/n$ except on a set of at most Lebesgue measure  $\sum_{i=1}^p 2 \frac{2}{M\cdot n } = C/n$, for some  constant $C>0$.  This immediately implies that for every $j$ large enough, the set
  \begin{displaymath}
    \widetilde{\calA}(j) \deq \{a \in \T : 
      \abs{G (  a )}  \leq  j^{ -2}  \}
  \end{displaymath}
has a Lebesgue measure less than $C j^{-2}$.

Remarking the 1-periodicity of $G$, we deduce that the Lebesgue measure of the set 
   \begin{displaymath}
 {\calA'}(j) \deq \{a \in \T : 
      \abs{G (  2^j a )}  \leq  j^{-2 }  \}
  \end{displaymath}
  is also equal $C j^{-2}$ (the same as that of $   \widetilde{\calA}(j)  $). 
    Obviously,
    $$\sum_{j\geq 1}  \ \mathcal{L}({\calA'}(j) ) <+\infty.$$
  Thus, applying the Borel-Cantelli lemma to the sets $ {\calA'}(j)$, we deduce that the limsup set  
  \begin{displaymath}
  \bigcap_{J\geq 1} \ \ \bigcup_{j\geq J}  \ \  \calA'(j) 
      \end{displaymath}
has zero Lebesgue measure. This set is the complement of the set $\calA'_1$, which by deduction is of full Lebesgue measure in $\zu$.
\end{proof}

\subsection{Prevalence property of an ancillary set}  \ \mk

\label{sec:key-result}

The key result to obtain the prevalence of the  singularity spectrum of Theorem \ref{theo:mainth} is the following theorem.

\begin{theorem}
  \label{theo:keyresult}
  Suppose that $0 < s- D/p < \infty$ and $0 < q \leq \infty$.  Let $\alpha \geq
  1$ and let us defined the exponent
  \begin{equation}
\label{defhalpha}
H(\alpha) \deq s - \frac{d}{p} + \frac{d}{\alpha p}.
\end{equation}
The
  set
  \begin{multline*}
    \calF_\alpha \deq \left\{f \in \Besov{s}{p}{q}(\T^D)
      \suchthat \exists  \, \calA(f) \text{ of full Lebesgue measure such that}
    \right. \\    a \in \calA(f) \Longrightarrow \forall x \in
      \calX^\alpha,  \ h_{f_a}(x) \leq H(\alpha)     \Big\}
        \end{multline*}
  is prevalent in $\Besov{s}{p}{q}(\T^D)$.
\end{theorem}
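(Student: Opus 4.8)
## Proof plan for Theorem~\ref{theo:keyresult}

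\textbf{Overall strategy: a finite-dimensional probe.} The plan is to exhibit an explicit probe measure $\mu$ supported on a finite-dimensional subspace of $\Besov{s}{p}{q}(\T^D)$ and show that for $\mu$-almost every perturbation $g$ and \emph{every} $f$, the function $f+g$ lies in $\calF_\alpha$; together with the (separately established) universal measurability of $\calF_\alpha$, this gives prevalence. The natural probe is built from a single ``saturating'' function: fix a function $\phi \in \Besov{s}{p}{q}(\T^D)$ whose wavelet coefficients are chosen to be maximal (up to constants) on a sparse set of scales, i.e. of the form $c_{\lambda^D} \asymp 2^{-j(s-D/p)}$ for $j$ in a lacunary sequence $(j_n)$ and for a \emph{single} well-chosen $(\bfk,\bfk')$ per scale, with $\bfl'$ chosen so that the relevant one-dimensional factors are scaling functions $\Psi^0$ (so that the product $\prod_i \Psi^{l'_i}_{j,k'_i}(a_i)$ becomes, after periodization, governed by $G_{d'}(2^j a)$). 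One checks that such a $\phi$ has finite Besov quasi-norm precisely because of the lacunarity and the normalization $s-D/p>0$. The probe is then $\mu = $ law of $t\phi$, $t\in[0,1]$ (one-dimensional Lebesgue measure pushed forward), or a slightly richer finite-dimensional family if a single direction is not enough to kill every translate.

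\textbf{Key steps, in order.}
First, I would set up the wavelet data of $\phi$ and verify $\phi\in\Besov{s}{p}{q}(\T^D)$ via \eqref{eq:caracbesov}, using that only one spatial index contributes at each of the lacunary scales $j_n$, so $\sum_{j}(2^{(sp-D)j}|c_{\lambda^D}|^p)^{q/p}<\infty$ reduces to $\sum_n 1 \cdot (\text{lacunary})$, which converges after inserting a mild decay like an extra $j^{-\beta p}$ factor (chosen large enough for summability, small enough not to spoil the growth estimate below).
Second, for a fixed $f$ with coefficients $c_{\lambda^D}(f)$ and for $t\ne 0$, I would compute the trace coefficients of $(f+t\phi)_a$ along the lacunary scales using \eqref{eq:tracecoef4}: at scale $j_n$, the coefficient $d_{\lambda}(a)$ of $(f+t\phi)_a$ picks up the term $t\, c_{\lambda^D}(\phi)\,\prod_{i} \Psi^{0}_{j_n,k'_i}(a_i)$, whose periodized size is $\asymp 2^{-j_n(s-D/p)} j_n^{-\beta} \cdot |G_{d'}(2^{j_n}a)|$, plus the contribution of $f$.
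Third, I would invoke Proposition~\ref{prop:protr1}: for $a$ in a full-measure set $\calA'_1$ (depending only on $\Psi$, not on $f$), $|G_{d'}(2^{j_n}a)| > j_n^{-2d'}$ eventually, so the $\phi$-term has size at least $c\,|t|\,2^{-j_n(s-D/p)} j_n^{-\beta-2d'}$.
Fourth — this is where the argument forces a \emph{lower} bound on coefficients — I would show that the set of ``bad'' $t$, for which the $f$-contribution cancels the $\phi$-contribution down to size $\le \varepsilon\, 2^{-j_n(s-D/p)}j_n^{-\beta-2d'}$ at infinitely many $n$, has Lebesgue measure zero: at each scale the bad set of $t$ is an interval of length $O(\varepsilon j_n^{?})/|c_{\lambda^D}(\phi)|$ around a single point $-c_{\lambda^D}(f)/c_{\lambda^D}(\phi)$, and a Borel–Cantelli / Fubini argument across the lacunary scales kills it.
Fifth, having a genuine lower bound $|d_\lambda(a)| \gtrsim 2^{-j_n(s-D/p)} j_n^{-C}$ at the scales $j_n$ with $k 2^{-j_n}$ within $2^{-j_n}$ of a prescribed point of $\calX^\alpha$ (one arranges $(k'_i)$, hence the spatial localization, to track an irreducible approximating sequence \eqref{eq:eq311}), I would feed this into the necessary condition of Proposition~\ref{caracpointwise}: if $f_a\in C^\gamma(x)$ then $|d_\lambda|\le M(2^{-j}+|x-k2^{-j}|)^\gamma$. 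For $x\in\calX^\alpha$ with approximants $(J_n,K_n)$, choosing the coefficients to live exactly on those $(J_n,K_n)$ gives $|d_\lambda|\le M\,2^{-J_n\gamma}$ (up to constants, since $|x-K_n2^{-J_n}|\le 2^{-\alpha J_n}\le 2^{-J_n}$), and comparing with the lower bound $2^{-J_n(s-D/p)}J_n^{-C}$ forces $\gamma \le s-D/p$; but one needs $\gamma \le H(\alpha) = s-d/p+d/(\alpha p)$, which is \emph{larger} than $s-D/p$, so the right accounting is subtler — the gain comes from the fact that we only demand the bound at points $x\in\calX^\alpha$, where a whole $d$-dimensional cube's worth of spatial indices $k$ near $x$ must simultaneously satisfy the estimate, and it is the $d'$-dimensional trace sum in \eqref{eq:tracecoef4} together with the box-counting of $\calX^\alpha$ (Theorem~\ref{theo:ubiquite}, $\dim_\calH\calX^\alpha = d/\alpha$) that produces the exponent $H(\alpha)$. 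Concretely one arranges, at scale $j_n$, not one but $\asymp 2^{j_n d'(1-1/\alpha)}$ coefficients $c_{\lambda^D}(\phi)$ spread over the $\bfk'$-variable, each of size $\asymp 2^{-j_n s} 2^{j_n d/(\alpha p)}$ so the Besov norm stays finite, and the trace sum over $\bfk'$ then has a term as large as claimed whenever $2^{j_n}a$ is suitably positioned — which happens for a.e.\ $a$ by the same Borel–Cantelli/periodicity device.
Sixth, take the countable intersection over a sequence $\varepsilon\to 0$ and over a dense set of target points $x$, and conclude that for $\mu$-a.e.\ $t$ and every $f$, there is a full-measure $\calA(f+t\phi)$ with $h_{(f+t\phi)_a}(x)\le H(\alpha)$ for all $x\in\calX^\alpha$; this is exactly $f+t\phi\in\calF_\alpha$, so $\calF_\alpha$ is prevalent modulo measurability.

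\textbf{Main obstacle.} The delicate point is the fourth–fifth steps: getting a robust \emph{lower} bound on the trace coefficients $d_\lambda(a)$ of $f+t\phi$ that holds for \emph{all} $f$ simultaneously (after removing a null set of $t$) \emph{and} at the correct density of scales/positions to yield the exponent $H(\alpha)$ rather than merely $s-D/p$. This requires (a) carefully balancing the lacunarity and the amplitude profile of $\phi$ so the Besov quasi-norm \eqref{eq:caracbesov} stays finite while the per-scale $\ell^p$-mass of coefficients is as large as possible, (b) controlling the periodized products $\prod_i \Psi^0_{j,k'_i}(a_i)$ uniformly via $G_{d'}$ and Proposition~\ref{prop:protr1}, and (c) a Fubini/Borel–Cantelli argument in the $(t,a)$ variables to discard simultaneously the bad $t$'s (where $f$ cancels $\phi$) and the bad $a$'s (where $G_{d'}(2^ja)$ is too small). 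The measurability of $\calF_\alpha$ — needed to even speak of prevalence — is explicitly deferred to \S\ref{sec:proof-keyresult} and its analytic-set machinery, so I would only flag it here and not attempt it in this proof.
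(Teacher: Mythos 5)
Your broad strategy does match the paper's: build a finite-dimensional probe in $\Besov{s}{p}{q}(\T^D)$, get a lower bound on trace coefficients of perturbations via the periodized function $G_{d'}$ and Proposition~\ref{prop:protr1}, then run a Borel--Cantelli/Fubini argument in the $(t,a)$ variables. But the concrete probe you propose has three defects, each fatal on its own.

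\textbf{The one-dimensional probe cannot survive Borel--Cantelli.} To show that a candidate set is shy one must bound, at each scale $j_0$, the total Lebesgue measure of the ``bad'' probe parameters that could cancel the probe contribution near some cube $\lambda_0$ at scale $j_0$, and sum over the $\asymp 2^{dj_0}$ cubes. With a single free parameter $t$ the bad set near each $\lambda_0$ is an interval of length $\asymp 2^{-(\gamma-H(\alpha))j_1}$ (with $j_1 \asymp \alpha j_0$), so the sum is $\asymp 2^{j_0(d-\alpha(\gamma-H(\alpha)))}$, which diverges as soon as $\gamma$ is close to $H(\alpha)$. The paper remedies this by building a $d_1 = 2^{dJ_0}$-dimensional probe (choosing $J_0$ so that $d - d_1\alpha(\gamma-H(\alpha))<0$, as in \eqref{choixj0}), spanned by functions $g^{(i)}$ whose coefficients are obtained by \emph{dispatching} those of a single base function $g$ over the $d_1$ subcubes at scale $j+J_0$ (formula \eqref{defprobe}). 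This guarantees $e^{(i')}_{\lambda^{(i)}}(a)=0$ for $i'\ne i$, so the bad set is a product of $d_1$ intervals of the above length and its measure decays like $2^{-d_1(\gamma-H(\alpha))j_1}$, beating the $2^{dj_0}$ count. You flag this possibility (``or a slightly richer finite-dimensional family'') but do not carry it out, and the needed dimension is not ``slightly richer'' but must grow with $1/(\gamma-H(\alpha))$.

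\textbf{The amplitude profile does not produce the exponent $H(\alpha)$.} Your proposed scale-$j_n$ amplitude $2^{-j_n s}2^{j_n d/(\alpha p)}=2^{-j_n(s-d/(\alpha p))}$ equals $2^{-j_nH(\alpha)}$ only when $\alpha=2$. The paper's key device, absent from your write-up, is to make the amplitude depend on the \emph{irreducibility level} $J\leq j$ of the cube position $\bfk 2^{-j}=\bfK 2^{-J}$ (with $\bfK$ odd): the base coefficients $e_{\lambda^D} = j^{-(q+2)/(qp)}2^{(d/p-s)j}2^{-dJ/p}$ (equation \eqref{eq:defcoefg}) are then large exactly at cubes positioned on dyadic points of low irreducibility. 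When $x\in\calX^\alpha$ is approximated by $\bfK_n 2^{-J_n}$ with $J_n \asymp j_n/\alpha$, the cube $\lambda_n$ at scale $j_n$ sitting at $\bfK_n 2^{-J_n}$ has $J=J_n$, and one gets $\asymp j_n^{-c}2^{(d/p-s)j_n}2^{-dj_n/(\alpha p)} = j_n^{-c}2^{-j_nH(\alpha)}$ automatically. This is also precisely what makes the Besov norm finite (the count of cubes at a given irreducibility level balances the $2^{-dJ}$ decay, see the proof of Proposition~\ref{prop:gBesov}), replacing your lacunarity device.

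\textbf{The coefficients must be placed at \emph{all} $\bfk'$, not a sparse subset, and with $\bfl'=1^{d'}$.} You propose using only $\asymp 2^{j_nd'(1-1/\alpha)}$ positions $\bfk'$ at scale $j_n$. The one-dimensional wavelets $\Psi^{l'_i}_{j,k'_i}$ are compactly supported, so the trace formula \eqref{eq:tracecoef3} at parameter $a$ only sees coefficients with $\bfk'$ within $O(1)$ of $2^{j_n}a$. A sparse set of $\bfk'$'s therefore leaves the probe invisible at $a$ outside a set of measure $\to 0$; by Borel--Cantelli, Lebesgue-almost every $a$ fails to see the probe at infinitely many scales, which is the opposite of what you need. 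The paper makes $e_{\lambda^D}$ independent of $\bfk'$, so the sum over $\bfk'$ periodizes to $G_{d'}(2^ja)$, which by Proposition~\ref{prop:protr1} is $\gtrsim j^{-2d'}$ for almost every $a$. Relatedly, the periodization that gives $G_{d'}$ requires $\bfl'=1^{d'}$, i.e.\ the mother wavelet $\Psi^1$, not the scaling function $\Psi^0$ as you suggest; $\sum_{k'}\Psi^0(\cdot-k')$ is constant and so that choice would give no information about where the probe is visible.
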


The proof of Theorem \ref{theo:keyresult} is postponed to
\S~\ref{sec:proof-keyresult}.  We admit it for the
moment, and we explain how we conclude once Theorem
\ref{theo:keyresult} is proved.  Our main result,
Theorem~\ref{theo:mainth}, is a direct consequence of
Propositions~\ref{prop:upperbound}--\ref{prop:lowerboundtrace} below.

\mk

From
now on, let $(\alpha_n)_{n\in\N}$ be a dense sequence in $[1, \infty)$
such that $\alpha_0 = 1$.  Using the fact that a countable
intersection of prevalent (resp. full Lebesgue measure) sets is
prevalent (resp. of  full Lebesgue measure), it follows immediately that:

\begin{corollary}
  \label{coro:keyresultdense}
  The set
  \begin{multline*}
  \!\!\!\!\!   \calF \deq \left\{f \in \Besov{s}{p}{q}(\T^D) \suchthat
      \exists  \, \calA(f) \text{ of full Lebesgue measure such that}
    \right. \\   a \in \calA(f) \Rightarrow \forall n\in\N, \, 
      \forall x \in \calX^{\alpha_n}, \, h_{f_a}(x) \leq H(\alpha_n)
    \Big\}
  \end{multline*}
  is prevalent in $\Besov{s}{p}{q}(\T^D)$.
\end{corollary}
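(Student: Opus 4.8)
The plan is to deduce Corollary~\ref{coro:keyresultdense} from Theorem~\ref{theo:keyresult} purely by the two stability properties of prevalence already recorded in the paper (after Definition~\ref{defprevalence}): namely that $B+x$ is prevalent whenever $B$ is, and, crucially here, that a countable intersection of prevalent sets is prevalent. Concretely, I would observe that
\[
\calF = \bigcap_{n\in\N} \calF_{\alpha_n},
\]
so that once each $\calF_{\alpha_n}$ is known to be prevalent (Theorem~\ref{theo:keyresult} applied with $\alpha=\alpha_n$), the set $\calF$ is prevalent as a countable intersection. The only genuine content is to justify this set-theoretic identity, and for that the mechanism is the same one used throughout this kind of argument: intersect the \emph{full-measure} sets $\calA(f)$ over $n$.

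First I would fix $f\in\Besov{s}{p}{q}(\T^D)$ and unwind the definitions. If $f\in\bigcap_n \calF_{\alpha_n}$, then for each $n$ there is a set $\calA_n(f)\subset\T^{d'}$ of full Lebesgue measure with the property that $a\in\calA_n(f)$ forces $h_{f_a}(x)\le H(\alpha_n)$ for all $x\in\calX^{\alpha_n}$. Setting $\calA(f)\deq\bigcap_{n\in\N}\calA_n(f)$, this is a countable intersection of full-measure subsets of $\T^{d'}$, hence itself of full Lebesgue measure; and any $a\in\calA(f)$ lies in every $\calA_n(f)$, so simultaneously $h_{f_a}(x)\le H(\alpha_n)$ for all $n\in\N$ and all $x\in\calX^{\alpha_n}$. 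That is exactly the defining condition of $\calF$, so $\bigcap_n\calF_{\alpha_n}\subset\calF$. The reverse inclusion is immediate: the witnessing set $\calA(f)$ in the definition of $\calF$ serves as a witness for each individual $\calF_{\alpha_n}$ (just restrict the universal quantifier over $n$ to the single value $n$). Hence $\calF=\bigcap_n\calF_{\alpha_n}$.

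Combining, each $\calF_{\alpha_n}$ is prevalent by Theorem~\ref{theo:keyresult}, so $\calF=\bigcap_{n}\calF_{\alpha_n}$ is prevalent by the countable-intersection stability of prevalence, which completes the proof. I do not expect a real obstacle here: the substantive work is entirely inside Theorem~\ref{theo:keyresult} (where the probe construction and the measurability of $\calF_\alpha$ live), and the corollary is a routine bookkeeping step. The one small point worth stating explicitly is that the same dense sequence $(\alpha_n)$ with $\alpha_0=1$ is used, so that in particular $\calX^1=\T^d$ is among the sets $\calX^{\alpha_n}$; this will matter later when Corollary~\ref{coro:keyresultdense} is used to produce the matching upper bound on $d_{f_a}$ for all exponents, but it requires nothing beyond recording the choice.
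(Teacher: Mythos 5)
Your proposal is correct and takes essentially the same approach as the paper: the paper also deduces the corollary directly from Theorem~\ref{theo:keyresult} via the countable-intersection stability of prevalence (and of full Lebesgue measure in $\T^{d'}$), stating it as immediate. You have simply spelled out the set-theoretic identity $\calF=\bigcap_n\calF_{\alpha_n}$ and the intersection of the witnessing sets $\calA_n(f)$, which the paper leaves implicit.
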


\subsection{Prevalent upper bound}  \ \mk

\label{sec:prev-upper-bound}

We first find an upper bound for the singularity spectrum of Lebesgue
almost traces of $f$, for {\bf every $f \in \Besov{s}{p}{q}(\T^D)$}. 
\begin{proposition}
  \label{prop:upperbound}
  For every $f \in \Besov{s}{p}{q}(\T^D)$, for almost all $a \in
  \T^{d'}$, we have 
   \begin{equation}
   \label{equpper}
  \mbox{ for every $h\geq s-d/p$, } \ \ \   d_{f_a}(h) \leq \min(d, d
  + (h-s) p ). 
  \end{equation}
  \end{proposition}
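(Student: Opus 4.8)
The plan is to combine the universal upper bound of Theorem \ref{theo:upperboundebesov} with the trace regularity of Theorem \ref{theo:tracebesov}, and then separately treat the "$\le d$" part of the minimum, which is automatic because $f_a$ is a function on $\T^d$ and Hausdorff dimension of a subset of $\T^d$ never exceeds $d$. So the real content is the bound $d_{f_a}(h) \le d + (h-s)p$ for $h \ge s - d/p$, together with $\hlevel{f_a}{h} = \emptyset$ for $h < s - d/p$.

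First I would fix $f \in \Besov{s}{p}{q}(\T^D)$. By the Besov embedding \eqref{eq:Besovembed} we have $f \in \Besov{s}{p}{\infty}(\T^D)$, so Theorem \ref{theo:tracebesov} applies (in its periodic form, cf.\ the discussion after \eqref{decompffinal2}): there is a set $\calA(f) \subset \T^{d'}$ of full Lebesgue measure such that for every $a \in \calA(f)$ and every $\ep > 0$, the trace $f_a$ belongs to $\Besov{s-\ep}{p}{\infty}(\T^d)$. Now fix such an $a$. The hypothesis $s > d/p$ lets us choose $\ep > 0$ small enough that $s - \ep > d/p$, so Theorem \ref{theo:upperboundebesov} applies to $g = f_a$ with the smoothness parameter $s - \ep$: for every $h \ge (s-\ep) - d/p$,
\begin{equation*}
  d_{f_a}(h) \le \min\bigl(d,\ d + (h - s + \ep)p\bigr),
\end{equation*}
and $\hlevel{f_a}{h} = \emptyset$ whenever $h < (s-\ep) - d/p$.

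To finish I would let $\ep \to 0$. For a fixed $h > s - d/p$, the condition $h \ge (s-\ep)-d/p$ holds for all sufficiently small $\ep$, and letting $\ep \to 0$ in the displayed inequality gives $d_{f_a}(h) \le \min(d, d + (h-s)p)$, which is \eqref{equpper} for that $h$. For the endpoint $h = s - d/p$ one argues by taking a countable sequence $\ep_n \downarrow 0$ and using that $\hlevel{f_a}{h}$ is, for each $n$, a subset of the set where Theorem \ref{theo:upperboundebesov} (applied with $s - \ep_n$) already forces dimension $\le \min(d, d + (h - s + \ep_n)p) = \min(d, d - d/p + \ep_n p)$; intersecting over $n$ and passing to the limit yields $d_{f_a}(s - d/p) \le \min(d, d - d/p) = d - d/p$, as desired. (Alternatively, since $\hlevel{f_a}{h} = \emptyset$ for $h < s - \ep_n - d/p$ for every $n$, one sees directly that $f_a$ has no singularities below exponent $s - d/p$, and a monotonicity/countable-union argument at the endpoint closes the case.) Finally, the bound $d_{f_a}(h) \le d$ for every $h$ is immediate since $\hlevel{f_a}{h} \subset \T^d$.

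The only mild subtlety — and the one step I would write carefully — is the endpoint $h = s - d/p$ and the uniformity of the argument across $\ep$: one cannot apply Theorem \ref{theo:upperboundebesov} with $\ep = 0$ directly (Theorem \ref{theo:tracebesov} does not give $f_a \in \Besov{s}{p}{\infty}$), so the limiting argument over a countable sequence $\ep_n \downarrow 0$, using that $d_{f_a}$ is defined via Hausdorff dimensions of a fixed decreasing-in-$\ep_n$ family of constraints, is what makes the passage to the limit legitimate. Everything else is a direct citation of the two quoted theorems.
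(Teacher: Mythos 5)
Your proposal is correct and takes essentially the same route as the paper: invoke Theorem~\ref{theo:tracebesov} to place $f_a$ in $\bigcap_{\ep>0}\Besov{s-\ep}{p}{\infty}(\T^d)$ for Lebesgue-a.e.\ $a$, apply the universal upper bound of Theorem~\ref{theo:upperboundebesov} at smoothness $s-\ep$, and let $\ep\to 0$. The extra care you devote to the endpoint $h=s-d/p$ is not really needed, since for each fixed $h\ge s-d/p$ the quantity $d_{f_a}(h)$ is a single number bounded by $\min(d,\,d+(h-s)p+\ep p)$ for every $\ep>0$, so taking the infimum over $\ep$ already gives the claim without any countable-intersection device.
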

\begin{proof}
  Let $f \in \Besov{s}{p}{q} (\R^D)$. By
  Theorem~\ref{theo:tracebesov}, there is a set $\calA(f)$ of full
  Lebesgue measure in $ \zu^{d'}$ such that for every $a\in \calA(f)$,
  the trace $f_a$ belongs to $ \bigcap\limits_{s-\ep< s} \Besov{s
    -\ep}{p}{\infty}(\R^d)$. Then, by
  Theorem~\ref{theo:upperboundebesov}, for every $h\geq s-d/p$, for
  every $\ep>0$,
 \begin{equation*}
    d_{f_a}(h) \leq \min(d, d + (h-s) p+\ep p).
  \end{equation*}
  Moreover, for every $\ep>0$, since $f_a \in
  \Besov{s-\ep}{p}{\infty}(\R^d)$ for every $a\in \calA(f)$, there is
  no point $x\in \zu^d$ such that $h_{f_a}(x) <s -d/p-\ep$.

  Letting $\ep>0$ yields exactly  the   upper bound \eqref{equpper}.
\end{proof}
 
One can obtain more precise informations for {\bf almost all $f \in
  \Besov{s}{p}{q}(\T^D)$}, i.e. on a prevalent set in $f \in
\Besov{s}{p}{q}(\T^D)$.

\begin{proposition}
  \label{prop:largestHoldertrace}
  For almost all $f \in \Besov{s}{p}{q}(\T^D)$, for Lebesgue-almost
  all $a \in \T^{d'}$, for all $x \in \T^d$, $h_{f_a}(x) \leq s$.
\end{proposition}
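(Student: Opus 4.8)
The statement asserts that for prevalent $f$, Lebesgue-almost every trace $f_a$ has pointwise Hölder exponent bounded above by $s$ everywhere. Since $s$ is exactly the ``generic'' exponent appearing in Theorem~\ref{theo:mainth}, the natural route is to use Theorem~\ref{theo:keyresult} (equivalently Corollary~\ref{coro:keyresultdense}) with the parameter $\alpha = \alpha_0 = 1$. Recall that $\calX^1 = \T^d$ and that $H(1) = s - d/p + d/(1\cdot p) = s$. Thus the set $\calF_{\alpha_0}$ — which is prevalent by Theorem~\ref{theo:keyresult} — consists precisely of those $f$ for which there is a full-measure set $\calA(f)$ of parameters $a$ with $h_{f_a}(x) \leq H(1) = s$ for all $x \in \calX^1 = \T^d$. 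That is exactly the claimed conclusion.

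\begin{proof}
  Apply Theorem~\ref{theo:keyresult} with $\alpha = 1$. Since $H(1) = s - \frac{d}{p} + \frac{d}{p} = s$ and $\calX^1 = \T^d$, the prevalent set $\calF_1$ is
  \begin{multline*}
    \calF_1 = \Big\{ f \in \Besov{s}{p}{q}(\T^D) \suchthat \exists \, \calA(f) \text{ of full Lebesgue measure in } \T^{d'} \\
    \text{ such that } a \in \calA(f) \Longrightarrow \forall x \in \T^d, \ h_{f_a}(x) \leq s \Big\}.
  \end{multline*}
  By Theorem~\ref{theo:keyresult}, $\calF_1$ is prevalent in $\Besov{s}{p}{q}(\T^D)$, which is exactly the assertion of the Proposition. (Equivalently, one may invoke Corollary~\ref{coro:keyresultdense}, since $\alpha_0 = 1$ belongs to the dense sequence $(\alpha_n)$, so every $f \in \calF$ satisfies $h_{f_a}(x) \leq H(\alpha_0) = s$ for all $x \in \calX^{\alpha_0} = \T^d$ and all $a \in \calA(f)$.)
\end{proof}

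There is essentially no obstacle here: the content of the Proposition is entirely absorbed into Theorem~\ref{theo:keyresult}, whose proof is deferred to \S\ref{sec:proof-keyresult}. The only point requiring care is the bookkeeping that $\calX^1 = \T^d$ (noted right after the definition of $\calX^\alpha$) and the arithmetic $H(1) = s$ from \eqref{defhalpha}; both are immediate. The real work — constructing an adequate probe and establishing the universal measurability of $\calF_\alpha$ (in particular of $\calF$), which is the genuinely delicate part, especially in the non-separable case $q = \infty$ — is exactly what Theorem~\ref{theo:keyresult} and the later sections are devoted to.
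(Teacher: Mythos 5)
Your proof is correct and is essentially identical to the paper's: you apply Theorem~\ref{theo:keyresult} directly with $\alpha=1$ (using $\calX^1=\T^d$ and $H(1)=s$), while the paper invokes the countable-intersection Corollary~\ref{coro:keyresultdense} and reads off the $\alpha_0=1$ condition — a purely cosmetic difference, as you yourself note.
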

\begin{proof}
   We apply  Corollary~\ref{coro:keyresultdense} with $\alpha_n =
   \alpha_0=1$: if $f$ belongs 
  to the prevalent set $\calF$, then for any $a \in \calA(f)$,
  for any $x \in \calX^{\alpha_0} = \calX^1 = \T^d$, $h_{f_a}(x) \leq
  H(\alpha_0) = s$.
\end{proof}

\subsection{Prevalent lower bound}  \ \mk

\label{sec:prev-lower-bound}

\begin{proposition}
  \label{prop:lowerboundtrace}
  For almost all $f \in \Besov{s}{p}{q}(\T^D)$, for almost all $a \in
  \T^{d'}$, for any $h \in [s - d/p, s]$, $d_{f_a}(h) \geq d + (h-s)p$
  and furthermore, $\hlevel{f_a}{s}$ has full Lebesgue measure.
\end{proposition}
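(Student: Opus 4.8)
The plan is to produce, for almost every $f$ (prevalently) and almost every $a$, enough singularities of prescribed exponent $h\in[s-d/p,s]$ to realize the lower bound $d_{f_a}(h)\ge d+(h-s)p$, and simultaneously to show that the level set at the top exponent $s$ is of full measure. The natural candidates for singularities of exponent $h$ are the points of the dyadic-approximation sets $\calX^\alpha$ of \S\ref{sec:dyadic-approximation}: if one chooses $\alpha$ so that $H(\alpha)=s-\tfrac dp+\tfrac{d}{\alpha p}=h$, then Theorem \ref{theo:ubiquite} gives $\dim_\calH\calX^\alpha=d/\alpha=d+(h-s)p$ (after solving for $\alpha$), which is exactly the target dimension. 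So the strategy is: (1) on the prevalent set $\calF$ of Corollary \ref{coro:keyresultdense} we already have, for $a\in\calA(f)$, the upper bound $h_{f_a}(x)\le H(\alpha_n)$ for $x\in\calX^{\alpha_n}$; (2) we need a matching \emph{lower} bound $h_{f_a}(x)\ge H(\alpha_n)$ for $\mathcal H^{d/\alpha_n}$-almost every (or at least a dimension-$d/\alpha_n$ set of) $x\in\calX^{\alpha_n}$, again for almost every $f$ and almost every $a$; (3) intersect over the dense sequence $(\alpha_n)$ and use Theorem \ref{theo:ubiquite}'s mass distribution principle to conclude $d_{f_a}(h)\ge d+(h-s)p$ for every $h$ in the range; (4) handle $h=s$ separately to get the full-measure statement in (ii), e.g. by using that $\calX^1=\T^d$ together with the generic lower bound $h_{f_a}(x)\ge s$ valid Lebesgue-a.e.

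For step (2), the key point is a prevalent lower bound on pointwise exponents. Here I would introduce a probe: a finite- or infinite-dimensional family of perturbations built from the wavelets, analogous to the construction in \S\ref{sec:probe-space}, chosen so that adding a generic perturbation $g$ to any $f$ forces the wavelet coefficients $c_\lambda(f+g)$ to be not too small along the cone of influence (Definition \ref{defcone}) above a large (full $m_\alpha$-measure) set of points $x$. Concretely, using the periodized wavelet $G$ and Proposition \ref{prop:protr1} (the set $\calA_1$ where $|G_{d'}(2^ja)|\gtrsim j^{-2d'}$ has full measure), the trace coefficients $d_\lambda(a)$ inherit a lower bound of the form $|d_\lambda(a)|\gtrsim |c_{\lambda^D}|\,j^{-2d'}$ for $a\in\calA_1$ whenever the wavelet coefficients $c_{\lambda^D}$ of the probe are concentrated on a single $\bfk'$-column. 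Taking the probe's coefficients to be of the critical size $2^{-jH(\alpha)}$ (up to polynomial corrections) along dyadic cones above points of $\calX^\alpha$, Proposition \ref{caracpointwise} (the reciprocal direction, which needs the two-sided control of wavelet coefficients) yields $h_{f_a}(x)\le H(\alpha)$ \emph{and} $\ge H(\alpha)$ there. One must check the perturbation stays in $\Besov{s}{p}{q}(\T^D)$: the Besov norm (\ref{eq:caracbesov}) of such a probe is finite precisely because $s-D/p>0$, so the sum over scales of $2^{(sp-D)j}\cdot(\#\text{coeffs})\cdot 2^{-jH(\alpha)p}$ converges — this is where the hypothesis $s>D/p$ (not merely $s>d/p$) enters.

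For step (3), once for prevalently many $f$ and Lebesgue-a.e.\ $a$ one has a subset $S_{\alpha_n}\subset\calX^{\alpha_n}$ with $m_{\alpha_n}(S_{\alpha_n})>0$ on which $h_{f_a}\equiv H(\alpha_n)$, Theorem \ref{theo:ubiquite} gives $\dim_\calH S_{\alpha_n}\ge d/\alpha_n$ (no set of smaller dimension carries positive $m_{\alpha_n}$-mass), whence $d_{f_a}(H(\alpha_n))\ge d/\alpha_n=d+(H(\alpha_n)-s)p$. Since $\alpha\mapsto H(\alpha)$ is continuous and $d_{f_a}$ is, on a prevalent set, bounded above by the concave function $h\mapsto d+(h-s)p$ (Proposition \ref{prop:upperbound}), a density argument over the dense sequence $(\alpha_n)$ upgrades the inequality to all $h\in[s-d/p,s]$: indeed the exponents $H(\alpha_n)$ are dense in $[s-d/p,s)$, and for a limit exponent $h$ one extracts $x$'s realizing nearby exponents and passes to the limit, or more cleanly one argues directly that $\calX^\alpha$ with the exact $\alpha$ works for each $h$. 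Finally, for (ii): $\calX^1=\T^d$, and the generic statement $h_{f_a}(x)\le s$ everywhere (Proposition \ref{prop:largestHoldertrace}) combined with a generic reverse inequality $h_{f_a}(x)\ge s$ holding for Lebesgue-a.e.\ $x$ — which follows from the trace belonging to $\Besov{s-\eps}{p}{\infty}(\T^d)$ for all $\eps$ and a Sobolev-type lower bound, or again from a probe argument — gives $h_{f_a}(x)=s$ on a full-measure set, which is $E_{f_a}(s)$.

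The main obstacle is step (2), and within it the delicate combination of three things at once: keeping the probe perturbation inside the Besov space with the right (critical, up to logs) coefficient size; ensuring via Proposition \ref{prop:protr1} that the $d'$-dimensional trace operator does not kill the coefficients for almost every $a$ (this is why the polynomial loss $j^{-2d'}$ is affordable — it is swallowed by $\eta$'s in Proposition \ref{caracpointwise}); and arranging that the \emph{prevalence} probe works for \emph{all} $f\in\Besov{s}{p}{q}(\T^D)$ simultaneously, i.e.\ that the "bad" set of $f$ for which the lower bound fails after perturbation is shy, with the attendant universal-measurability headaches flagged in \S\ref{sec:prevalence} (analytic-set arguments, and the separate treatment when $q=\infty$). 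Compared to that, steps (1), (3) and the $h=s$ case are essentially bookkeeping on top of the tools already assembled (Theorems \ref{theo:tracebesov}, \ref{theo:upperboundebesov}, \ref{theo:ubiquite}, \ref{theo:keyresult} and Propositions \ref{caracpointwise}, \ref{prop:protr1}).
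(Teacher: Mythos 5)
There is a genuine conceptual gap at your step (2). You propose to establish the pointwise \emph{lower} bound $h_{f_a}(x)\ge H(\alpha)$ by a second, separate probe argument. This is not what the paper does, and it is both unnecessary and conceptually problematic. A probe argument of the kind used in Theorem~\ref{theo:keyresult} naturally forces wavelet coefficients along the cone of influence to be \emph{not too small} (for Lebesgue-a.e.\ $\beta$ one cannot have $c_\lambda(f)+\beta e_\lambda$ small), which translates via Proposition~\ref{caracpointwise} into an \emph{upper} bound on the exponent. To get a lower bound $h_{f_a}(x)\ge H(\alpha)$ you need the coefficients to be \emph{small} along the cone, and adding a probe with coefficients of critical size $2^{-jH(\alpha)}$ to an arbitrary $f$ gives no such control: the unknown coefficients $c_\lambda(f)$ may dominate. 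Your parenthetical ``two-sided control of wavelet coefficients'' papers over exactly this issue.

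The paper's actual route is much lighter. The lower bound on the exponent is obtained \emph{for free} from what is already in hand: Proposition~\ref{prop:upperbound} (a consequence of Theorem~\ref{theo:upperboundebesov}) gives, for every $f$ and a.e.\ $a$, the universal bound $d_{f_a}(h)\le d+(h-s)p$. Hence the set $\calY^\alpha=\{x: h_{f_a}(x)<H(\alpha)\}$, written as $\bigcup_n\{x: h_{f_a}(x)\le H(\alpha)-1/n\}$, is a countable union of sets of Hausdorff dimension strictly less than $d/\alpha$, and so has $m_\alpha$-measure zero (Theorem~\ref{theo:ubiquite}). Combined with the inclusion $\calX^\alpha\subset\{x: h_{f_a}(x)\le H(\alpha)\}$ supplied by membership of $f$ in $\calF$, this forces $h_{f_a}(x)=H(\alpha)$ on a set of positive $m_\alpha$-measure, which then has Hausdorff dimension at least $d/\alpha=d+(h-s)p$. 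No second probe, no new measurability work, no new Besov estimates. The single probe lives entirely inside the proof of Theorem~\ref{theo:keyresult}, which this proposition simply invokes. You should also note that your plan omits the endpoint $h=s-d/p$: here $d+(h-s)p=0$, so one only needs $E_{f_a}(s-d/p)\ne\emptyset$, and the paper gets this by observing that $\calX^\infty=\bigcap_{\alpha\ge1}\calX^\alpha$ is a nonempty (dense $G_\delta$) set on which $h_{f_a}\le s-d/p$ while the converse inequality $h_{f_a}\ge s-d/p$ holds everywhere by Proposition~\ref{prop:upperbound}.
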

\begin{proof}
  Consider a function $f$ in the prevalent set $\mathcal{F}$. Let $h \in (s - d/p, s]$. This exponent can be written 
  \begin{equation}
\label{eqhalpha}
 h=H(\alpha)=s-\frac{d}{p} +\frac{d}{\alpha p}
\end{equation}
  for some given $\alpha \geq 1$. 
  
  Consider a subsequence     $(\alpha_{\phi(n)})_{n\in\N}$ of $(\alpha_n)_{n\in\N}$ which  is nondecreasing and converges to   $\alpha$ (for $\alpha = 1$ this would just be $\phi = 0$).

  \mk
  
  Let us first assume that $\alpha>1$, i.e. $H(\alpha) \in (s-d/p,s)$. 
  Remark that $ \calX^{\alpha} \subset   \bigcap _{n\geq 1} \calX^{\alpha_{\phi(n)}}$.  Since $f \in
  \calF$, it follows that for all $a \in \calA(f)$ and $x \in
  \calX^{\alpha}$, $h_{f_a}(x) \leq H(\alpha)$. Hence $\calX^\alpha \subset \{x: h_{f_a(x)} \leq H(\alpha) \}$.
  
  \mk
  Recall that Theorem \ref{theo:ubiquite} provides us with a measure $m_\alpha$ which is supported by $\calX^\alpha$, and which gives measure 0 to every set of dimension strictly less than $d/\alpha$.
  
Let us introduce the set
  $\calY^\alpha \deq \set{x}{h_{f_a(x)} < H(\alpha)}$.  Clearly,
  \begin{equation*}
    \calY^\alpha = \bigcup_{n\geq 1}   \set{x }{h_{f_a}(x )\leq H(\alpha) - {1}/{n} }.
  \end{equation*}
 
 By \eqref{equpper}, each set $ \set{x}{h_{f_a}(x )\leq H(\alpha) - {1}/{n} }$ has Hausdorff dimension strictly less than $d/\alpha$. 
   The scaling properties  and the $\sigma$-additivity of the measure $m_\alpha$ 
yield that  $m_\alpha\paren{\calY^\alpha} = 0$.  

Remembering that $m_\alpha(\calX^\alpha)>0$,   we have  $m_\alpha(\calX^\alpha \backslash \calY^\alpha) > 0$. This means equivalently  that  $m_\alpha(\set{x
      \in  \calX^\alpha}{h_{f_a}(x ) = H(\alpha)  } ) > 0$. This implies that the set $\set{x
      \in  \calX^\alpha}{h_{f_a}(x ) = H(\alpha)  }$ has Hausdorff dimension greater than $d/\alpha$, and thus
      $$d_{f_a}(h) = d_{f_a}(H(\alpha))=\dim_{\calH}\{x:h_{f_a}(x) =H(\alpha)\} \geq d/ \alpha =  p(h-s) +d,$$
      the last equality following from \eqref{eqhalpha}.
      
      \mk
      
When $\alpha = 1$,
  the same reasoning using the $d$ -dimensional Lebesgue measure $\Lebesgue_d$ instead of $m_\alpha$ yields
  $\hlevel{f_a}{s} \supset \T^d \backslash \calY^1$ with
  $\Lebesgue_d(\calY^1) = 0$. Hence $\Lebesgue_d (\hlevel{f_a}{s}) =1$.

\mk

  Finally, it remains us to treat the case of the smallest exponent  $h = s - d/p$. Remembering the definition of $\mathcal{F}$, observe that at  any element  $x$ of
  $\calX^\infty \deq \bigcap_{\alpha\geq 1} \calX^\alpha = \bigcap_{ n \geq 1} \calX^{\alpha_n} $,  one necessarily has $h_f(x) \leq s-d/p$.  Since the converse inequality   holds true for any $x$,  we have proved that $\calX^\infty \subset E_{f_a}(s-d/p)$. We conclude by noting that $\calX^\infty$ is
  certainly not empty (and  uncountable), since it is  a dense $G_\delta$ set in $\R^d$.
\end{proof}

Theorem~\ref{theo:mainth} is now proved, provided that we can  establish
Theorem~\ref{theo:keyresult}.

\section{Proof of Theorem~\ref{theo:keyresult}:  prevalence of
  $\calF_\alpha$} 
\label{sec:proof-keyresult}

\newcommand{\Fagn}{\calO_{\gamma, N}}
\newcommand{\Fagnn}{\calO_{\gamma_n, N}}
\newcommand{\Aagn}{\calA_{\gamma, N}(f)}

We simplify the problem by including the complement of $\calF_\alpha$
in a countable union of simpler ancillary sets.  Let $N$ be an
integer, $\alpha > 1$, $\gamma > H(\alpha)$ and
\begin{equation*}
  \Fagn \deq \set{f\in \Besov{s}{p}{q}(\T^D)}{\Lebesgue_{d'}(\Aagn) > 0}
\end{equation*}
where
\begin{equation*}
  \Aagn = \set{a \in \T^{d'}}{ 
    \begin{array}{c}
      \exists x \in  \calX^\alpha, \ \forall \lambda = (j, \bfk, l) \in
      \Lambda^d \times L^d, \\
      \abs{c_{\lambda}(a)} \leq N 2^{-\gamma j} (1 + \abs{2^j x -
        \bfk})^\gamma
    \end{array} } .
\end{equation*}

Remark that the conditions on the wavelet coefficients that appear in the definition of $ \Aagn$ implies that $f_a$ has exponent greater than $\gamma$ at $x$.

Recall the definition of  $  \calF_\alpha$
 \begin{multline*}
    \calF_\alpha \deq \left\{f \in \Besov{s}{p}{q}(\T^D)
      \suchthat \exists  \, \calA(f) \text{ of full Lebesgue measure such that}
    \right. \\    a \in \calA(f) \Longrightarrow \forall x \in
      \calX^\alpha,  \ h_{f_a}(x) \leq H(\alpha)     \Big\}.
        \end{multline*}
\begin{proposition}
  For any sequence $(\gamma_n)_{n\in\N}$ strictly decreasing to $H(\alpha)$,  we
  have
  \begin{equation*}
    \Besov{s}{p}{q}(\T^D) \backslash \calF_\alpha \subset
    \bigcup_{n, N \in \N} \Fagnn
  \end{equation*}
\end{proposition}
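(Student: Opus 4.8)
The plan is to prove the set-theoretic inclusion by contraposition: show that if $f \notin \bigcup_{n,N} \Fagnn$, then $f \in \calF_\alpha$. So assume that for every $n,N \in \N$ we have $f \notin \Fagnn$, which means $\Lebesgue_{d'}(\Aagn) = 0$ for all $n,N$ (writing $\gamma = \gamma_n$). First I would set $\calA(f) \deq \T^{d'} \setminus \bigcup_{n,N \in \N} \calA_{\gamma_n, N}(f)$; since this is a countable union of Lebesgue-null sets, $\calA(f)$ has full Lebesgue measure in $\T^{d'}$, which is the first requirement in the definition of $\calF_\alpha$.

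Next I would verify the implication $a \in \calA(f) \Rightarrow \forall x \in \calX^\alpha,\ h_{f_a}(x) \leq H(\alpha)$. Argue by contradiction: suppose $a \in \calA(f)$ but there is some $x_0 \in \calX^\alpha$ with $h_{f_a}(x_0) > H(\alpha)$. Pick $\gamma$ with $h_{f_a}(x_0) > \gamma > H(\alpha)$; since $(\gamma_n)$ strictly decreases to $H(\alpha)$, there is an $n$ with $\gamma_n \le \gamma < h_{f_a}(x_0)$, hence $f_a \in C^{\gamma_n}(x_0)$ (using $C^{\gamma}(x_0) \subset C^{\gamma_n}(x_0)$ for $\gamma_n \le \gamma$, and that $h_{f_a}(x_0)$ is the supremum of exponents). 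By Proposition~\ref{caracpointwise} applied to $f_a$ and the exponent $\gamma_n$ — valid because $f_a \in \bigcup_{\ep>0} C^\ep(\zu^d)$ for a.e.\ $a$ by Theorem~\ref{theo:tracebesov}, and because the wavelet is taken sufficiently regular — there is a constant $M < \infty$ with $|c_\lambda(a)| \le M 2^{-\gamma_n j}(1 + |2^j x_0 - \bfk|)^{\gamma_n}$ for all $\lambda = (j,\bfk,l) \in \Lambda^d \times L^d$. Choosing an integer $N \geq M$, this shows precisely that $a \in \calA_{\gamma_n, N}(f)$, contradicting $a \in \calA(f)$.

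The subtle point — which I expect to be the main (mild) obstacle — is the interface between the local regularity definition and the wavelet decomposition: one must use form \eqref{decompffinal2} of $f_a$, which is only available for $a$ in a full-measure set (those $a$ for which $f_a \in \bigcap_{\ep>0}\Besov{s-\ep}{p}{\infty}(\zu^d)$ by Theorem~\ref{theo:tracebesov}). So I would actually define $\calA(f)$ as the intersection of $\T^{d'} \setminus \bigcup_{n,N} \calA_{\gamma_n,N}(f)$ with that full-measure set of ``good'' $a$'s, keeping $\calA(f)$ of full measure. For such $a$, the coefficients $c_\lambda(a)$ in the definition of $\Aagn$ are the genuine wavelet coefficients of $f_a$, so Proposition~\ref{caracpointwise} applies verbatim. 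A second small point to check is that the remark following the definition of $\Aagn$ — that membership in $\Aagn$ forces $f_a$ to have exponent $\geq \gamma$ at $x$ — is not actually needed for this inclusion; only the forward direction (Hölder regularity $\Rightarrow$ coefficient decay) of Proposition~\ref{caracpointwise} is used here. Putting these together yields $f \in \calF_\alpha$, completing the proof of the inclusion.
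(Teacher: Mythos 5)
Your proof is correct and uses essentially the same ingredients as the paper's: the countable-union structure in $n$ and $N$, and the forward direction of Proposition~\ref{caracpointwise} (H\"older regularity implies wavelet coefficient decay). The paper organizes it slightly differently — it factors $\calF_\alpha = \bigcap_n \calF_{\alpha,\gamma_n}$ through intermediate sets $\calF_{\alpha,\gamma}$ and argues directly that $f\notin\calF_{\alpha,\gamma_n}$ forces some $\calA_{\gamma_n,N}(f)$ to have positive measure — whereas you argue the single contrapositive in one pass; the two are logically identical. You are also a bit more explicit than the paper about needing to intersect $\calA(f)$ with the full-measure set of $a$'s for which $f_a$ has a genuine wavelet expansion \eqref{decompffinal2}, which is a fair point the paper leaves implicit.
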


\begin{proof}
We   write that  $\calF_\alpha = \bigcap_{n \geq 1} \calF_{\alpha,
    \gamma_n}$, where for any $\gamma > H(\alpha)$ we put
  \begin{displaymath}
    \calF_{\alpha, \gamma} = \set{f\in
    \Besov{s}{p}{q}(\T^D)}{\begin{array}{c}   
        \exists  \, \calA_\gamma(f)  \text{ of full Lebesgue
          measure such that  } \\
    a \in  \calA_\gamma(f) \Rightarrow   \  \forall x \in  \calX^\alpha, \ 
        f_a \notin C^{\gamma}(x)
      \end{array} }.
  \end{displaymath}
When $f \not \in \calF_{\alpha, \gamma}$,  the set
  $\set{a \in \T^{d'}}{\exists x \in \calX^\alpha, f_a \in
    C^\gamma(x)}$ has positive Lebesgue measure. But
  by~(\ref{eq:caracholder}) of Proposition \ref{caracpointwise} which gives the characterization of $C^\gamma(x)$ in terms of wavelet coefficients, this last set is included in $   \Aagn$, for some $N\geq 1$.  Hence   $\Besov{s}{p}{q}(\T^D) \backslash \calF_{\alpha,
    \gamma} \subset \bigcup_{N \in \N^*} \Fagn$ and
  the conclusion follows.
\end{proof}

To prove Theorem~\ref{theo:keyresult}, it suffices now to show that
each set $\Fagn$ is universally measurable
(Proposition~\ref{prop:mesure}) and shy
(Proposition~\ref{prop:timide}).

From now on we fix $N \in \N^*$, $\alpha > 1$ and $\gamma > H(\alpha)$.

\subsection{Measurability}  \ \mk

\label{sec:measurability}

First we deal here only with the case $q < \infty$, that is
when $\Besov{s}{p}{q} (\R^D)$ is a Polish space.  The case $q = \infty$ is
proved in Appendix~\ref{sec:proof-key-nonsep},
Proposition~\ref{prop:mesure-nonsep}.

\begin{proposition}
  \label{prop:mesure}
  The set $\Fagn$ is universally measurable in $\Besov{s}{p}{q}(\T^D)$. 
\end{proposition}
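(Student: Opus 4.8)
The plan is to exhibit $\Fagn$ as an analytic subset of the Polish space $\Besov{s}{p}{q}(\T^D)$ and then invoke Theorem \ref{theo:unimeasurable}. First I would set up the relevant Polish spaces: the Besov space $\Besov{s}{p}{q}(\T^D)$ itself (Polish when $q<\infty$ by the remarks in \S\ref{sec:besov-spaces}), the compact cube $\T^{d'}$ of the transverse parameter $a$, and the set $\calX^\alpha$, which — being a $\limsup$ of closed sets $\calX^\alpha_j$ — is Borel (indeed $G_\delta$ inside each $\bigcup_{j\geq J}\calX^\alpha_j$ when intersected appropriately), hence a standard Borel, hence Polish-in-the-relative-sense space. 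Next, inside the product $\Besov{s}{p}{q}(\T^D)\times \T^{d'}\times \calX^\alpha$ I would consider the set
\[
  \calR \deq \Bigl\{(f,a,x) : \forall \lambda=(j,\bfk,l)\in\Lambda^d\times L^d,\ \abs{c_\lambda(a)}\leq N\,2^{-\gamma j}(1+\abs{2^j x-\bfk})^\gamma \Bigr\}.
\]
The key point is that $\calR$ is a Borel set: for each fixed $\lambda$ the coefficient $c_\lambda(a)$ depends continuously on $(f,a)$ — it is a finite sum (over $\bfk'\in\Z^{d'}_j$ and $\bfl'\in\{0,1\}^{d'}$) of the products $c_{\lambdaD}\prod_{i}\Psi^{l'_i}_{j,k'_i}(a_i)$ by \eqref{eq:tracecoef4}--\eqref{eq:tracecoef3}, and the map $f\mapsto c_{\lambdaD}$ is continuous (even linear) on the Besov space — and the right-hand side is continuous in $x$, so the condition indexed by $\lambda$ cuts out a closed set; intersecting over the countably many $\lambda$ keeps it Borel.

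Then $\Aagn = \{a : \exists x\in\calX^\alpha,\ (f,a,x)\in\calR\}$ is exactly the $x$-projection (onto the $(f,a)$-coordinates, then fibered over $f$) of the Borel set $\calR$; more precisely, the set $\calS \deq \{(f,a) : \exists x\in\calX^\alpha,\ (f,a,x)\in\calR\}$ is the projection of a Borel set along the Polish factor $\calX^\alpha$, hence analytic in $\Besov{s}{p}{q}(\T^D)\times\T^{d'}$. Finally I would use the standard fact that if $\calS$ is analytic in $Y\times Z$ with $Z$ Polish equipped with a Borel (here Lebesgue) measure $\Lebesgue_{d'}$, then $f\mapsto \Lebesgue_{d'}^*(\calS_f)$ (outer measure of the section) is universally measurable, and consequently $\Fagn = \{f : \Lebesgue_{d'}(\Aagn)>0\} = \{f : \Lebesgue_{d'}^*(\calS_f)>0\}$ is universally measurable — this is the measurable-projection / von Neumann selection circle of results, valid because sections of analytic sets are analytic hence universally measurable, and the measure function of an analytic set is itself measurable.

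The main obstacle I expect is the passage from ``$\calS$ analytic'' to ``$f\mapsto\Lebesgue_{d'}(\calS_f)$ universally measurable'': one must be careful that the sections $\calS_f\subset\T^{d'}$ are universally measurable (true: they are analytic) so that $\Lebesgue_{d'}(\calS_f)$ is unambiguously defined after completion, and one needs the (non-trivial) theorem that the map $f\mapsto\Lebesgue_{d'}(\calS_f)$ is measurable with respect to the $\sigma$-algebra generated by analytic sets — equivalently, that $\{f:\Lebesgue_{d'}(\calS_f)>0\}$ is analytic. This can be quoted from descriptive set theory (e.g. Kechris, or the measurable projection theorem), but it is the real content; the rest is a routine verification that the defining inequalities are Borel, which I would not grind through in detail beyond noting the continuity of $f\mapsto c_{\lambdaD}$ and of $a\mapsto\prod_i\Psi^{l'_i}_{j,k'_i}(a_i)$.
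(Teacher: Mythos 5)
Your proof is essentially the same as the paper's: both establish that the set
\[
  \calS \deq \set{(f,a)\in \Besov{s}{p}{q}(\T^D)\times\T^{d'}}{\exists\, x\in\calX^\alpha,\ \forall\lambda,\ \abs{c_\lambda(a)}\leq N2^{-\gamma j}(1+\abs{2^jx-\bfk})^\gamma}
\]
is the projection of a Borel set along the Polish factor $\calX^\alpha$ and hence analytic, then invoke Theorem~\ref{theo:unimeasurable}.  The one place where you diverge is the last step.  You stop at ``quote a measurable-projection / descriptive-set-theory result that $f\mapsto\Lebesgue_{d'}^*(\calS_f)$ is universally measurable,'' flagging this as the real content and leaving it as a citation.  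The paper instead gives a short, self-contained argument for exactly that point: fixing an arbitrary complete Borel measure $\mu$ on $\Besov{s}{p}{q}(\T^D)$, it observes that $\calS$ is analytic in the Polish product, hence $\mu\otimes\Lebesgue_{d'}$-measurable by Theorem~\ref{theo:unimeasurable}, and then a single application of Fubini's theorem to $\Phit(f)=\int_{\T^{d'}}\mathbf{1}_{\calS}(f,a)\,da$ shows $\Phit$ is $\mu$-measurable; since $\mu$ was arbitrary, $\Fagn=\Phit^{-1}((0,\infty))$ is universally measurable.  So what you propose to import as a black box from Kechris is precisely what the paper proves in three lines via Fubini on the complete product measure.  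Two further minor remarks: the condition defining $\calR$ (before intersecting with $\calX^\alpha$) is in fact \emph{closed}, not merely Borel, being a countable intersection of closed sets $\{\Phi_\lambda\geq 0\}$ — the paper phrases this as ``$\Phi=\inf_\lambda\Phi_\lambda$ is Borel,'' a slightly weaker but sufficient observation; and your parenthetical remark about $\calX^\alpha$ being ``Polish in the relative sense'' is more than is needed — it is enough that $\calX^\alpha$ is Borel in $\T^d$, so that $\widetilde{\calT}=\Phi^{-1}([0,\infty))\cap(\Besov{s}{p}{q}(\T^D)\times\T^{d'}\times\calX^\alpha)$ is Borel in the ambient Polish product and its projection along $\T^d$ is analytic.
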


\newcommand{\Phit}{\widetilde\Phi}
\begin{proof}
  Let
  \begin{align*}
    \Phi_{\lambda}(f, a, x) &\deq N 2^{-\gamma j} (1 + \abs{2^jx
      - k})^\gamma - \abs{c_{\lambda}(a)} \\
    \Phi(f, a, x) &\deq \inf_{\lambda \in \Lambda^d \times L^d}
    \Phi_{\lambda}(f, a, x) \\ \intertext{and} \Phit(f) &\deq
    \Lebesgue_{d'}\paren{ \set{a \in \T^{d'} }{ \exists x \in
        \calX^\alpha, \Phi(f, a, x) \geq 0 } }
  \end{align*}
  so that
  \begin{equation*}
    \Fagn =  \Phit^{-1}  \big( (0, +\infty) \big).
  \end{equation*}
To obtain Proposition \ref{prop:mesure},  we just have to prove that $\Phit$ is universally measurable as a map $:\Besov{s}{p}{q}(\T^D) \longrightarrow \R^+$. For this, let us fix a complete Borel measure $\mu$ on
  $\Besov{s}{p}{q}(\T^D)$.
  
First observe  that each $\Phi_\lambda$ is continuous on
  $\Besov{s}{p}{q}(\T^D) \times \T^{d'} \times \T^d$. This follows from the fact that the dependence of the wavelet coefficients $c_\glaD$ on $f$ is continuous (the Besov topology
  induced on the space of wavelet coefficients
  by~(\ref{eq:caracbesov}) is stronger than the product topology) and the dependence of $c_\gla(a)$ on the variables $a$ and $c_\glaD$ is also continuous (from their definitions \eqref{eq:tracecoef4}, \eqref{eq:tracecoef3} and \eqref{decompffinal2}).  As
  a countable infimum of continuous functions, $\Phi$ is Borel on the
  Polish space $\Besov{s}{p}{q}(\T^D) \times \T^{d'} \times \T^d$.

  Clearly $\calX^\alpha \in \Borel(\T^d)$, so the set
  \begin{equation*}
    \widetilde{\calT} \deq \Phi^{-1}([0, \infty)) \cap
    \Big(\Besov{s}{p}{q}(\T^D)
     \times \T^{d'} \times \calX^\alpha \Big)
  \end{equation*}
  is also Borel and its projection along the third coordinate
  \begin{equation*}
    \Pi(\widetilde{\calT}) \deq \set{(f, a) \in \Besov{s}{p}{q}(\T^D)
      \times \T^{d'}}{\exists x \in \calX^\alpha, (f, a, x) \in
      \widetilde{\calT}}
  \end{equation*}
  is analytic in the space $(\Besov{s}{p}{q}(\T^{D}) \times
  \T^{d'}, \Borel(\Besov{s}{p}{q}(\T^{D}) \times
  \T^{d'}))$.
  By the universal measurability Theorem~\ref{theo:unimeasurable},
  $\Pi(\widetilde{\calT})$ is then $\mu \otimes
  \Lebesgue_{d'}$-measurable.

  To conclude, we notice that $ \Phit$ can be written as
    \begin{equation*}
    \Phit : f \mapsto \int_{\T^{d'}}
    \mathbf{1}_{\Pi(\widetilde{\calT})}(f, a) \, da.
  \end{equation*}
  Since $\Pi(\widetilde{\calT})$ is  $\mu \otimes
  \Lebesgue_{d'}$-measurable, we can  apply  Fubini's theorem, so that  we conclude  that $ \Phit$ is $\mu$-measurable,  for any complete Borel measure $\mu$ on
  $\Besov{s}{p}{q}(\T^D)$. 
\end{proof}

\subsection{Probe space}  \ \mk

\label{sec:probe-space}

In  this section $q \in (0, \infty)$, with the obvious
modifications when $q = \infty$.  
%

\mk

We use the following
notation: to each $(j, \bfk) \in \Lambda^d$ we associate the unique $(J,
\bfK)$, $J \in \N$ and $\bfK \in \Z_J^d \backslash 2\Z_J^d$ satisfying $\bfK
2^{-J} = \bfk 2^{-j}$ ($\bfK
2^{-J} $ is the irreducible   version of the dyadic  point $\bfk 2^{-j}$).  Obviously, with the preceding notations, $J\leq j$.

\begin{proposition}
  \label{prop:gBesov}
 Let us define, for every  $\lambdaD = (j, (\bfk, \bfk'),(\bfl, \bfl') ) \in \Lambda^D$ 
\begin{equation}
  \label{eq:defcoefg}
  e_{\lambdaD} \deq \begin{cases} j^{-\frac{q+2}{qp}} 2^{(\frac{d}{p}-s)j}2^{-\frac{d}{p}J}   & \mbox{ if  \  $\bfl \neq 0^d$ and $\bfl' = 1^{d'}$},\\
   \ \ 0  &  \mbox{ if \  $\bfl =  0^d$ or $\bfl' \neq 1^{d'}$ }.
  \end{cases}
\end{equation}
 The function $g \deq \sum\limits_{\lambdaD \in \Lambda^D} e_{\lambdaD}
  \psi_{\lambdaD}$ belongs to $\Besov{s}{p}{q}(\T^D)$.
\end{proposition}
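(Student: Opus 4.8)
The plan is to compute the $\Besov{s}{p}{q}(\T^D)$ quasi-norm of $g$ directly from the wavelet characterization \eqref{eq:caracbesov}. Since every coefficient $e_{\lambdaD}$ in \eqref{eq:defcoefg} that does not vanish has $\bfl\neq 0^d$, hence $(\bfl,\bfl')\in L^D$, the series $g=\sum_{\lambdaD}e_{\lambdaD}\Psi_{\lambdaD}$ is a genuine wavelet expansion (no scaling functions occur), so \eqref{eq:caracbesov} applies verbatim and it suffices to show
\[
  \norm{\Besov{s}{p}{q}}{g}^{q}\;=\;\sum_{j\geq 1}\biggparen{2^{(sp-D)j}\sum_{\substack{(\bfk,\bfk')\in\Z_j^D\\(\bfl,\bfl')\in L^D}}\abs{e_{\lambdaD}}^{p}}^{q/p}\;<\;\infty
\]
(with the supremum over $j$ replacing the outer sum when $q=\infty$). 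The key observation is that, at a fixed scale $j$, one has $\abs{e_{\lambdaD}}^{p}=j^{-\frac{q+2}{q}}\,2^{(d-sp)j}\,2^{-dJ(\bfk)}$ whenever $\bfl\neq 0^d$ and $\bfl'=1^{d'}$ (and $0$ otherwise), so the value depends on the multi-index $\lambdaD=(j,(\bfk,\bfk'),(\bfl,\bfl'))$ only through $j$ and through the reduced scale $J=J(\bfk)$ attached to the first block $\bfk$; the remaining indices enter the scale-$j$ sum only via their multiplicities.

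First I would carry out the scale-$j$ sum. The $(2^{d}-1)$ admissible values $\bfl\in L^d$, the single admissible value $\bfl'=1^{d'}$, and the $2^{jd'}$ values $\bfk'\in\Z_j^{d'}$ each contribute a flat factor, so that
\[
  \sum_{\substack{(\bfk,\bfk')\in\Z_j^D\\(\bfl,\bfl')\in L^D}}\abs{e_{\lambdaD}}^{p}\;=\;(2^{d}-1)\,2^{jd'}\,j^{-\frac{q+2}{q}}\,2^{(d-sp)j}\sum_{\bfk\in\Z_j^{d}}2^{-dJ(\bfk)}.
\]
The combinatorial heart of the matter is then the evaluation of $\sum_{\bfk\in\Z_j^{d}}2^{-dJ(\bfk)}$: for $1\leq J\leq j$ the map $\bfK\mapsto 2^{\,j-J}\bfK$ identifies $\{\bfk\in\Z_j^{d}:J(\bfk)=J\}$ with the set of $\bfK\in\Z_J^{d}$ having at least one odd coordinate, of which there are $2^{Jd}-2^{(J-1)d}=(2^{d}-1)2^{(J-1)d}$, while $J(\bfk)=0$ only for $\bfk=0$; weighting $2^{-dJ}$ by these multiplicities gives $\sum_{\bfk\in\Z_j^{d}}2^{-dJ(\bfk)}=1+(1-2^{-d})j$, which is comparable to $j$ for $j\geq 1$.

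Substituting this back, the powers of $2^{j}$ cancel exactly — precisely because $d+d'=D$, so that $sp-D+d'+d-sp=0$ — and one is left with
\[
  2^{(sp-D)j}\sum_{\substack{(\bfk,\bfk')\in\Z_j^D\\(\bfl,\bfl')\in L^D}}\abs{e_{\lambdaD}}^{p}\;=\;(2^{d}-1)\bigl(1+(1-2^{-d})j\bigr)\,j^{-\frac{q+2}{q}}\;\asymp\;j^{-\frac{2}{q}},
\]
so that $\norm{\Besov{s}{p}{q}}{g}^{q}$ reduces to the numerical series $\sum_{j\geq 1}\bigl(j^{-2/q}\bigr)^{q/p}$ (respectively a supremum, finite because the above sequence is then bounded, when $q=\infty$). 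The weight $j^{-\frac{q+2}{qp}}$ in \eqref{eq:defcoefg} has been tuned exactly so that, once the extra factor $j$ produced by the dyadic counting is absorbed, this residual series is summable; that gives $\norm{\Besov{s}{p}{q}}{g}<\infty$, i.e. $g\in\Besov{s}{p}{q}(\T^D)$. I expect the only genuinely delicate point to be the bookkeeping above — in particular the count of the dyadic points of $\Z_j^{d}$ according to their reduced scale $J$, which is what forces the precise shape of the $j$-weight; once that is settled, everything is a mechanical use of \eqref{eq:caracbesov}.
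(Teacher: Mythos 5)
Your argument is the same as the paper's: apply the wavelet characterization \eqref{eq:caracbesov}, factor the flat dependence on $\bfk'$ and $\bfl$ out of the scale-$j$ block, and evaluate $\sum_{\bfk\in\Z_j^d}2^{-dJ(\bfk)}$ by partitioning $\bfk$ according to its reduced scale $J$; the powers of $2^j$ cancel because $sp-D+d'+d-sp=0$, leaving a residual factor comparable to $j$ that the weight $j^{-\frac{q+2}{qp}}$ is supposed to absorb. On the combinatorics you are in fact a touch more careful than the paper: the number of $\bfk\in\Z_j^d$ with reduced scale $J$ is $(2^d-1)2^{(J-1)d}$, whereas the paper states $2^{d(J-1)}$; the discrepancy is a harmless constant.

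One point you should notice --- it is a weakness in the paper's own proof as well, not something you introduced --- is the final convergence step. Your computation correctly reduces $\norm{\Besov{s}{p}{q}}{g}^q$ to $\sum_j A_j^{q/p}$ with $A_j\asymp j^{-2/q}$, that is, to $\sum_j j^{-2/p}$; but this series converges only when $p<2$, and you (like the paper, which asserts that $A_j$ ``is an $\ell^q$ sequence'' --- the wrong target, since what the norm demands is that $A_j^{1/p}\in\ell^q$) declare it summable with no restriction on $p$. A slightly heavier weight, for instance $j^{-(\frac{1}{p}+\frac{2}{q})}$ in place of $j^{-\frac{q+2}{qp}}$, would give $A_j^{q/p}\asymp j^{-2}$ and make the proposition hold for every $0<p<\infty$.
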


\begin{proof}
  Observe that $e_{\lambdaD} $ does not depend on $\bfl \in  L^d$.  Using the wavelet characterization \eqref{eq:caracbesov} of
  $\Besov{s}{p}{q}(\T^D)$, the proof boils down to studying for
  all integers $j \geq 1$ the quantity
$$
    A_j   \deq 2^{j(sp-D)} \sum_{ \glaD= (j, (\bfk, \bfk'),(\bfl, \bfl') )  : \, (\bfk, \bfk') \in \Z_j^D,  \, (\bfl, \bfl')\in \{0,1\}^D } \abs{e_{\lambdaD}}^p .
    $$
    By construction, $e_{\lambdaD}$ is the same for all       $\bfk'$ and all $\bfl$, and equals zero except when $\bfl'=1^{d'}$ and $\bfl \neq 0^d$. Thus,
        \begin{eqnarray*}
        A_{j }  &   =   & (2^d -1) 2^{j(sp-D+d')} \sum_{\bfk \in \Z_j^d  } \abs{e_{\lambdaD}}^p \leq   2^{d+j(sp - d)}    \sum_{\bfk \in \Z_j^d  }  j^{-p\frac{q+2}{qp} } 2^{(d - sp)j}
2^{- d J} \\
    & \leq  &  2^d j^{- \frac{q+2}{q } }     \sum_{\bfk \in \Z_j^d  }   2^{- d J} .
    \end{eqnarray*}
    where one should not forget that $J$ depends on
      $\bfk$. For a given integer $1\leq J\leq j$, the number of multi-integers $\bfk \in \Z_j^d  $ such that its irreducible version can be written $\bfK 2^{-J}$  (for some $\bfK$)   is exactly $2^{d(J-1)} $. Hence
      $$    A_j  \leq  2^d j^{-\frac{q+2}{q}} \sum_{J=1}^{j} 2^{d(J-1) - dJ} =     j^{-\frac2q}
$$
  which is an $l^q$ sequence.
\end{proof}

\begin{remark}
Although we did not prove it here, the singularity spectrum of $g$ (and of the functions $g^{(i)}$ below) can be explicitly computed: for every $h\in [s-d/p,s]$, $d_g(h) =  ph -ps +D$, and $d_g(h)=-\infty$ else. It is noticeable that $g$ does not enjoy the generic spectrum in $\Besov{s}{p}{q}(\T^D)$ (the generic spectrum has the same formula but the range of $h$ is $[s-D/p,s]$, not $[s-d/p,s]$). Nevertheless its traces will be shown to have the typical spectrum in $\Besov{s}{p}{q}(\T^d)$.
\end{remark}

\newcommand{\dsonde}{d_1} 
\newcommand{\lambdat}{\widetilde{\lambda}} 

Let $J_0 \geq 1$ to be fixed later and $\dsonde \deq 2^{d J_0}$.  For
each $d$-dimensional dyadic cube $\lambda  \in \Lambda^d$ at scale~$j $, we
enumerate in an arbitrary fashion $\lambda ^{(1)}, \dots,
\lambda ^{(\dsonde)}$ its  $d_1$ sub-cubes at scale $j+J_0$.

\begin{definition}
We set the
\emph{probe space} $\calP$ to be the $\dsonde$-dimensional subspace of
$\Besov{s}{p}{q}(\T^D)$ spanned by the functions $g^{(i)}$, whose
wavelet coefficients $e^{(i)}_{\lambdaD}$ are defined in the following
way: for each $\lambdaD = (j, (\bfk, \bfk'),(\bfl, \bfl') ) \in \Lambda^D$, let $\lambda \deq (j,\bfk,\bfl )$ and
\begin{equation}
\label{defprobe}
  e^{(i)}_{\lambdaD} =
  \begin{cases}
    e_{ \widetilde\gla^D} & \text{if } \lambda = \lambdat^{(i)} \text{ for some
    } \widetilde\gla^D:=( j - J_0, (\tilde\bfk,\tilde\bfk'),( \tilde\bfl, \tilde\bfl')) \\
    0 & \text{else.}
  \end{cases}
\end{equation}
\end{definition}

In the definition above, $\lambdat^{(i)}$ is the sub-cube associated with $\lambdat=( j - J_0,  \tilde\bfk ,  \tilde\bfl )$ (which is the restriction to $\zu^d$ of $\widetilde\gla^D$).

In particular, recalling \eqref{eq:defcoefg}, as soon as $\bfl'\neq 1^{d'}$, $ e^{(i)}_{\lambdaD} =0$, and this coefficient is the same for all $\bfk'\in \Z_j^{d'}$. By the same proof as Proposition~\ref{prop:gBesov}, each $g^{(i)}$
also belongs to $\Besov{s}{p}{q}(\T^D)$.

\smallskip

Heuristically, the wavelet coefficients of $g$ at generation $j$ are dispatched in wavelet coefficients at generation $j+J_0$ for the functions $g^{(i)}$, the distribution being organized so that for any cube $\glaD$, there is only one $g^{(i)}$  such that $e^{(i)}_{\lambdaD} \neq 0.$
\smallskip

Let us now consider their traces $g^{(i)}_a$ on the affine subspace
$\calH_a$. 

\begin{lemma}
For every $i\in \{1,\cdots, d_1\}$, for every $j\geq 1$, for every $\lambda=(j,\bfk,\bfl)$ with $\bfk\in \Z_j^d$ and $\bfl\neq 0^d$, we have the formula
\begin{equation}
  \label{eq:defgi}
  e^{(i)}_\lambda(a) =  e^{(i)}_{(j,(\bfk, 1^{d'}),(1^d, 1^{d'}))}  G_{d'}(2^j a)
\end{equation}
 where $G_{d'}$ was defined in \eqref{defgdprime}.
 
Moreover, if $\bfl =  0^d$, then $e^{(i)}_\lambda(a) =0$.
\end{lemma}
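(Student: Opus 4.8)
The plan is to unwind the definitions of the trace coefficients in \eqref{eq:tracecoef4}--\eqref{eq:tracecoef3} applied to the function $g^{(i)}$ instead of a general $f$. Recall that for $\lambda = (j,\bfk,\bfl) \in \Lambda^d \times \{0,1\}^d$, the trace coefficient $e^{(i)}_\lambda(a)$ is a sum over $\bfk' \in \Z_j^{d'}$ and $\bfl' \in \{0,1\}^{d'}$ (with $\bfl'$ restricted to $L^{d'}$ when $\bfl = 0^d$) of terms $e^{(i)}_{\lambdaD} \prod_{i=1}^{d'}\Psi^{l'_i}_{j,k'_i}(a_i)$, where $\lambdaD = (j,(\bfk,\bfk'),(\bfl,\bfl'))$. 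First I would recall the two structural facts about the coefficients $e^{(i)}_{\lambdaD}$ established right after \eqref{defprobe}: (a) $e^{(i)}_{\lambdaD} = 0$ whenever $\bfl = 0^d$ or $\bfl' \neq 1^{d'}$, and (b) when $\bfl' = 1^{d'}$ the value $e^{(i)}_{\lambdaD}$ does not depend on $\bfk' \in \Z_j^{d'}$ at all (it depends only on $\lambda = (j,\bfk,\bfl)$, and through \eqref{defprobe} only on whether $\lambda$ is the sub-cube $\lambdat^{(i)}$ of some cube at scale $j-J_0$).

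From fact (a), the case $\bfl = 0^d$ is immediate: every term in \eqref{eq:tracecoef4} has $e^{(i)}_{\lambdaD} = 0$, so $e^{(i)}_\lambda(a) = 0$, which is the last assertion of the lemma. For $\bfl \neq 0^d$, fact (a) kills every term in \eqref{eq:tracecoef3} except those with $\bfl' = 1^{d'}$, leaving
\begin{equation*}
  e^{(i)}_\lambda(a) = \sum_{\bfk' \in \Z_j^{d'}} e^{(i)}_{(j,(\bfk,\bfk'),(\bfl,1^{d'}))} \prod_{i=1}^{d'} \Psi^1_{j,k'_i}(a_i).
\end{equation*}
By fact (b) the coefficient in front is constant in $\bfk'$, equal to $e^{(i)}_{(j,(\bfk,1^{d'}),(\bfl,1^{d'}))}$ (any choice of $\bfk'$ gives the same number; I pick $\bfk' = 1^{d'}$ to match the statement), so it factors out of the sum. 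What remains is $\sum_{\bfk' \in \Z_j^{d'}} \prod_{i=1}^{d'}\Psi^1_{j,k'_i}(a_i) = \prod_{i=1}^{d'}\sum_{k'_i \in \Z_j} \Psi^1_{j,k'_i}(a_i) = \prod_{i=1}^{d'} \sum_{k \in \Z_j} \Psi^1(2^j a_i - k)$, and since we work on $\zu^{d'}$ with periodized wavelets this sum over $\Z_j$ is exactly $G(2^j a_i)$ (cf.\ \eqref{eq:defGG}), giving $\prod_i G(2^j a_i) = G_{d'}(2^j a)$ by \eqref{defgdprime}. Finally I should note the remaining dependence on $\bfl$ in the prefactor: since $e^{(i)}_{\lambdaD}$ with $\bfl' = 1^{d'}$ does not depend on $\bfl \in L^d$ either (this is explicit in \eqref{eq:defcoefg} and propagated through \eqref{defprobe}), one may replace $\bfl$ by $1^d$ in the prefactor, which is the form written in \eqref{eq:defgi}.

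This lemma is essentially bookkeeping, so I do not anticipate a genuine obstacle; the one point requiring care is keeping track of exactly which indices the coefficients $e^{(i)}_{\lambdaD}$ do and do not depend on — in particular that the independence from $\bfk'$ (needed to factor the product of one-dimensional periodized sums) and from $\bfl$ (needed to normalize the prefactor to $\bfl = 1^d$) both hold, and that these are consequences of the defining formula \eqref{eq:defcoefg} for $e_{\lambdaD}$ together with \eqref{defprobe}. I would state these two independence properties explicitly at the start of the proof so that the factorization step is transparent.
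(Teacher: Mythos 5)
Your proof is correct and follows essentially the same bookkeeping as the paper: isolate the terms with $\bfl'=1^{d'}$, use the independence from $\bfk'$ and from $\bfl\in L^d$ to pull out a constant prefactor normalized at $(\bfk,1^{d'})$ and $\bfl=1^d$, then recognize the remaining sum as a product of periodized one-dimensional sums equal to $G_{d'}(2^ja)$. The only cosmetic difference is how the last step is justified: the paper replaces $\sum_{\bfk'\in\Z_j^{d'}}$ by $\sum_{\bfk'\in\Z^{d'}}$ using the compact support of $\Psi^1$ and $j$ large enough (given $a\in(0,1)^{d'}$), whereas you invoke periodization to identify the finite sum with $G_{d'}(2^ja)$ directly; both are fine, and neither changes the structure of the argument.
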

\begin{proof}
 Following~(\ref{eq:tracecoef4}) and ~(\ref{eq:tracecoef3}), the wavelet coefficients
of these traces are: for all $j\geq 1$, for all $\gla = (j,\bfk,\bfl) \in \{j\}\times \Z_j^d\times \{0,1\}^d$,  
\begin{eqnarray}
 \label{eq:tracecoef10}  \mbox{if $\bfl=0^d$, } &   e^{(i)}_\lambda(a)    \deq  & \sum_{ \substack{ \glaD =(j,(\bfk,\bfk'),(0^d,\bfl')): \\   \bfk' \in \Z_j^{d'},  \,  \bfl' \in L^{d'}}}
    e^{(i)}_{\lambdaD} \prodprimePsi.\\
 \label{eq:tracecoef11} \mbox{if $\bfl  \in L^d$,  } &   e^{(i)}_\lambda(a)    \deq  & \sum_{ \substack{\glaD =(j,(\bfk,\bfk'),(\bfl,\bfl')): \\   \,   \,\bfk' \in \Z_j^{d'},  \, \bfl' \in \{0, 1\}^{d'}}}
    e^{(i)}_{\lambdaD} \prodprimePsi.
\end{eqnarray}
By definition  of $e^{(i)}_{\lambdaD}$, the coefficients \eqref{eq:tracecoef10} are all zero. Now,  remember that by construction $e^{(i)}_{\lambdaD}$ does not depend on
$\bfk'$, nor on $\bfl\in L^d$, and that they all have the same values as one of them, say the one  with  $\bfl=1^d$. Thus, as soon as $\bfl\neq 0^d$, formula \eqref{eq:tracecoef11} can be simplified into
\begin{eqnarray*}
  e^{(i)}_\lambda(a)  & = &  e^{(i)}_{(j,  \bfk,  1^d )} (a) =  \sum_{  {\glaD =(j,(\bfk,\bfk'),(1^d, 1^{d'}):    \,   \,\bfk' \in \Z_j^{d'} }}  e^{(i)}_{\lambdaD} \ \prod_{i=1}^{d'} \Psi_{j,k'_i}^1 (a_i) \\
  & = & e^{(i)}_{(j,(\bfk, 1^{d'}),(1^d, 1^{d'}))}   \sum_{     \,\bfk' \in \Z_j^{d'}  }   \ \prod_{i=1}^{d'} \Psi^1 (2^ja_i -k'_i).
\end{eqnarray*}
Since $\Psi^1$ has compact support, for a given $a\in (0,1)^{d'}$, when $j$ is large enough, we have 
$$ \sum_{   \bfk' \in \Z_j^{d'}  }   \ \prod_{i=1}^{d'} \Psi^1 (2^ja_i -k'_i) = \sum_{   \bfk' \in \Z ^{d'}  }   \ \prod_{i=1}^{d'} \Psi^1 (2^ja_i -k'_i) = G_{d'}(2^j a).$$
 This yields \eqref{eq:defgi}.
\end{proof}

\label{sec:probe-space-1}

\mk

Let $x \in \calX^\alpha$ not a dyadic element of $\zu^d$, and consider the irreducible  sequence $(J_n, \bfK_n)$
associated to $x$ as in (\ref{eq:eq311}), i.e. 
$$|x - \bfK_n2^{-J_n}| \leq 2^{-\alpha J_n}.$$
  Let $a \in \Aun$ and let
$j_a$ be the associated integer constructed  in Proposition~\ref{prop:protr1}.
Let $n$ be such that $j_n \deq [{\alpha J_n}] \geq j_a$ and such that \eqref{eq:defgi} holds. Let us
denote by $\lambda_n \deq (j_n, \bfk_n, \bfl)$ the unique  dyadic node (unique in the sense that $\bfl$ varies in $L^d$) such that
$\bfK_n 2^{-J_n} = \bfk_n 2^{-j_n}$. With each $\gla_n$  can be associated its sub-cubes $\gla_n^{(i)}$, $i\in \{1,\cdots, d_1\}$. 
 
\begin{lemma}
  \label{lemm:lemtr2}
  For all $1 \leq i \leq \dsonde$, $\lambda_n^{(i)}$ lies within the cone
  of influence of width $2^{J_0+2}$ above $x$, and
  \begin{equation}
    \label{eq:eq312}
    \abs{e^{(i)}_{\lambda_n^{(i)}}(a)} \geq  C 
    j_n^{-(2d'+\frac{q+2}{qp})} 2^{-H(\alpha) j_n}
  \end{equation}
  the constant $C$ depending only on $J_0$.
\end{lemma}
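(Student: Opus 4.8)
The plan is to prove Lemma~\ref{lemm:lemtr2} by combining the explicit trace formula \eqref{eq:defgi} with the distribution rule \eqref{defprobe}--\eqref{eq:defcoefg} for the probe coefficients. First I would establish the cone-of-influence claim. Since $\lambda_n^{(i)}$ is one of the $d_1 = 2^{dJ_0}$ sub-cubes of $\lambda_n$ at scale $j_n + J_0$, its dyadic center $\bfk_n^{(i)} 2^{-(j_n+J_0)}$ lies within $l^\infty$-distance $2^{-j_n}$ of $\bfk_n 2^{-j_n} = \bfK_n 2^{-J_n}$. By \eqref{eq:eq311}, $|x - \bfK_n 2^{-J_n}| \leq 2^{-\alpha J_n} \leq 2^{-j_n}$ (using $j_n = [\alpha J_n] \leq \alpha J_n$). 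Adding these and comparing with $2^{-(j_n+J_0)}$ gives $|x - \bfk_n^{(i)} 2^{-(j_n+J_0)}| \leq 2 \cdot 2^{-j_n} = 2^{J_0+1}\cdot 2^{-(j_n+J_0)} \leq 2^{J_0+2} \cdot 2^{-(j_n+J_0)}$, which is exactly the defining inequality of Definition~\ref{defcone} for the cone of width $2^{J_0+2}$ above $x$ at the node $\lambda_n^{(i)}$.

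Next I would lower-bound the wavelet coefficient. By \eqref{eq:defgi} applied at scale $j = j_n + J_0$ and multi-index $\bfk = \bfk_n^{(i)}$, we have $e^{(i)}_{\lambda_n^{(i)}}(a) = e^{(i)}_{(j_n+J_0,(\bfk_n^{(i)}, 1^{d'}),(1^d, 1^{d'}))}\, G_{d'}(2^{j_n+J_0} a)$. By the construction \eqref{defprobe}, since $\lambda_n^{(i)}$ is by definition the $i$-th sub-cube of $\lambda_n$ at scale $(j_n+J_0)$, the coefficient $e^{(i)}_{(j_n+J_0,(\bfk_n^{(i)},1^{d'}),(1^d,1^{d'}))}$ equals $e_{\widetilde\lambda^D}$ with $\widetilde\lambda^D$ at scale $j_n$ sitting above $\bfk_n$, hence by \eqref{eq:defcoefg} it equals $j_n^{-\frac{q+2}{qp}} 2^{(\frac{d}{p}-s)j_n} 2^{-\frac{d}{p}J}$, where $J$ is the scale of the irreducible representative of $\bfk_n 2^{-j_n}$. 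But $\bfk_n 2^{-j_n} = \bfK_n 2^{-J_n}$ is already irreducible (as $(J_n,\bfK_n)$ is the irreducible sequence for $x$), so $J = J_n$. Now $j_n = [\alpha J_n]$, so $J_n \geq j_n / \alpha$, giving $2^{-\frac{d}{p}J_n} \geq 2^{-\frac{d}{\alpha p} j_n}$; wait, that is the wrong direction. Rather one wants $2^{-\frac{d}{p}J_n} \geq 2^{-\frac{d}{p}\cdot \frac{j_n+1}{\alpha}} = c\, 2^{-\frac{d}{\alpha p}j_n}$ using $J_n \leq (j_n+1)/\alpha$, so that the combined exponent of $2$ is $(\frac{d}{p}-s)j_n - \frac{d}{\alpha p}j_n + O(1) = -H(\alpha) j_n + O(1)$ by the definition \eqref{defhalpha} of $H(\alpha)$. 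This produces the factor $C j_n^{-\frac{q+2}{qp}} 2^{-H(\alpha)j_n}$ up to the constant depending on $J_0$.

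Finally I would control the periodized-wavelet factor: since $a \in \Aun$ and $j_n + J_0 \geq j_n \geq j_a$, Proposition~\ref{prop:protr1} gives $|G_{d'}(2^{j_n+J_0}a)| > (j_n+J_0)^{-2d'} \geq C j_n^{-2d'}$. Multiplying the two lower bounds yields $|e^{(i)}_{\lambda_n^{(i)}}(a)| \geq C j_n^{-2d'} \cdot c\, j_n^{-\frac{q+2}{qp}} 2^{-H(\alpha)j_n} = C' j_n^{-(2d' + \frac{q+2}{qp})} 2^{-H(\alpha)j_n}$, which is \eqref{eq:eq312}. The main obstacle I anticipate is purely bookkeeping: tracking carefully that the scale shift by $J_0$ in the definition of the probe does not disturb the identification $J = J_n$ of the irreducible scale, and absorbing all the resulting $2^{O(J_0)}$ and $(1 + J_0/j_n)$-type terms into the single constant $C = C(J_0)$ while keeping the power of $j_n$ and the exponential rate $2^{-H(\alpha)j_n}$ exactly as stated — in particular one must check that $\alpha J_n$ and $j_n = [\alpha J_n]$ differ by at most $1$, so that replacing one by the other in exponents costs only a bounded multiplicative constant.
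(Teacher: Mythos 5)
Your proposal is correct and follows essentially the same route as the paper: first bound $|x - \bfk_n^{(i)} 2^{-(j_n+J_0)}|$ by a triangle inequality to place $\lambda_n^{(i)}$ in the cone of width $2^{J_0+2}$, then chain \eqref{eq:defgi}, \eqref{defprobe}, \eqref{eq:defcoefg} and Proposition~\ref{prop:protr1}, using $J = J_n$ (irreducibility) and $\alpha J_n \leq j_n + 1$ to convert $2^{-\frac{d}{p}J_n}$ into $C\,2^{-\frac{d}{\alpha p}j_n}$ and hence recover the rate $2^{-H(\alpha)j_n}$. Your explicit remark identifying $J = J_n$ because $(J_n, \bfK_n)$ is the chosen irreducible sequence is a step the paper uses implicitly, and your self-corrected inequality $J_n \leq (j_n+1)/\alpha$ is exactly the paper's $\alpha J_n \leq j_n + 1$.
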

\begin{proof}
 Remark that   $\lambda_n^{(i)}  $ can be written $\lambda_n^{(i)}= (j_n + J_0, \bfk_n^{(i)}, \bfl)$ for some integer $\bfk_n^{(i)} \in \Z_{j_n+J_0}^d$.  By construction, 
 $$ | \bfk_n^{(i)} 2^{- (j_n + J_0)} - \bfK_n2^{-J_n}|  = | \bfk_n^{(i)} 2^{- (j_n + J_0)} - \bfk_n2^{-j_n}|  \leq 2^{-j_n}.$$
  Using~(\ref{eq:eq311}) we deduce that 
\begin{eqnarray*}
 \big|x - \bfk_n^{(i)} 2^{- (j_n + J_0)} \big|  &  \leq  &   \big|{x - \ \bfK_n2^{-J_n} } \big| +  \big| \bfk_n^{(i)} 2^{- (j_n + J_0)} - \bfK_n2^{-J_n} \big|  \\
  &  \leq  &    {2^{-\alpha
      J_n}} + {2^{-j_n}} \leq 3 {2^{-j_n}} \leq  ( { 2^{J_0+2}}){2^{- (j_n+J_0)}}.
\end{eqnarray*}
This shows the first part of Lemma \ref{lemm:lemtr2}.

\mk

Recall now Proposition \ref{prop:protr1}. The fact that 
  $a \in \calA_1$   guarantees that
$|G _{d'}(2^ja)| \geq j^{-2d'}$ as soon as $j\geq j_a$.  Combining this 
with~(\ref{eq:defgi}), we get
$$
 \abs{ e^{(i)}_{(\lambda_n)^{(i)}}(a) }  \geq  (j_n+J_0)^{-2d'}
  \abs{  e^{(i)}_{(j_n+J_0,(\bfk_n^{(i)}, 1^{d'}),(1^d, 1^{d'}))}  }.$$
 Remembering now how we chose the coefficients of $g^{(i)}$ in \eqref  {defprobe}, we see that   
\begin{eqnarray*}  
 \abs{ e^{(i)}_{(\lambda_n)^{(i)}}(a) }   &\geq  &  (j_n+J_0)^{-2d'}  \cdot \abs{  e _{(j_n ,(\bfk_n, 1^{d'}),(1^d, 1^{d'}))}  } \\ 
&\geq  &  (j_n+J_0)^{-2d'} \cdot   (j_n  )^{-\frac{q+2}{qp}}   2^{( \frac{d}{p}-s) j_n -
    \frac{d}{p} J_n} \\ 
  &\geq  &  C j_n^{-(2d'+\frac{q+2}{qp})} 2^{-H(\alpha) j_n} 
\end{eqnarray*}
where we used that $\alpha J_n\leq j_n+1$.
\end{proof}

\subsection{Shyness of $\Fagn$}\ 

\

Recall that $\gamma >H(\alpha)$.

Take an arbitrary $f \in \Besov{s}{p}{q}(\T^D)$ with wavelet coefficients $c_\glaD$, and for
each $\beta \in \R^{\dsonde}$ define
\begin{equation*}
  f^\beta \deq f + \sum_{i=1}^{\dsonde} \beta_i g^{(i)}.
\end{equation*}
As usual now, $f^\beta_a$ will denote its trace at level $x' = a$ and
$c^\beta_\lambda(a)$ the wavelet coefficients of that trace.
Now we choose $J_0$ large enough so that 
\begin{equation}
\label{choixj0}
 d - \dsonde (\gamma -
H(\alpha)) < 0
\end{equation}
Our goal is to prove:
\begin{proposition}
  \label{prop:timide}
 For any $f \in \Besov{s}{p}{q}(\T^D)$, the set $\set{\beta \in \R^{d_1}}{f^\beta \in \Fagn}$ has
  $d_1$-dimensional Lebesgue  measure $\mathcal{L}_{d_1}$ equal to 0.
\end{proposition}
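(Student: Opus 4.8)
The plan is a Fubini argument on $\R^{d_1}\times\T^{d'}$, in which Lemma~\ref{lemm:lemtr2} does the real work.

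\textbf{Reduction by Fubini.} Running the argument of the proof of Proposition~\ref{prop:mesure}, specialised to the finite-dimensional family $\{f^\beta\}_{\beta\in\R^{d_1}}$ (recall that $\beta\mapsto f^\beta$ is affine and continuous, so $\beta\mapsto c^\beta_{\lambdaD}$ is continuous), one sees that the set
\[
  \mathcal S\deq\set{(\beta,a)\in\R^{d_1}\times\T^{d'}}{a\in\calA_{\gamma,N}(f^\beta)}
\]
is analytic, hence Lebesgue-measurable, in the Polish space $\R^{d_1}\times\T^{d'}$ (note that here only $\beta$ moves, so the non-separability issue for $q=\infty$ is irrelevant). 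Write $\mathcal S_\beta\deq\set{a}{(\beta,a)\in\mathcal S}$ and $\mathcal S^a\deq\set{\beta}{(\beta,a)\in\mathcal S}$. Since $f^\beta\in\Fagn$ iff $\Lebesgue_{d'}(\mathcal S_\beta)>0$, Tonelli's theorem gives $\int_{\R^{d_1}}\Lebesgue_{d'}(\mathcal S_\beta)\,d\beta=\int_{\T^{d'}}\Lebesgue_{d_1}(\mathcal S^a)\,da$, so it suffices to prove $\Lebesgue_{d_1}(\mathcal S^a)=0$ for Lebesgue-almost every $a$. I will prove it for every $a\in\Aun$, which is almost every $a$ by Proposition~\ref{prop:protr1}.

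\textbf{Constraining $\beta$ coordinate by coordinate.} Fix $a\in\Aun$ and $\beta\in\mathcal S^a$, with a witness $x\in\calX^\alpha$; I treat the case $x$ non-dyadic, which is the substantial one (the countably many dyadic points contribute a shy set, handled by a simpler one-parameter probe; a countable union of shy sets being shy, this is harmless). Let $(J_n,\bfK_n)$ be the irreducible sequence of $x$ and $j_n\deq[\alpha J_n]\to\infty$. For every $n$ large enough that $j_n\geq j_a$ and \eqref{eq:defgi} holds (this amounts to $J_n$ exceeding a threshold $J^*(a)$), Lemma~\ref{lemm:lemtr2} provides cubes $\lambda_n^{(i)}=(j_n+J_0,\bfk_n^{(i)},\bfl)$, $1\leq i\leq d_1$, lying in the cone of influence of width $2^{J_0+2}$ above $x$; hence $1+\abs{2^{j_n+J_0}x-\bfk_n^{(i)}}\leq 2^{J_0+3}$, and the inequality defining $\calA_{\gamma,N}(f^\beta)$, applied at $\lambda_n^{(i)}$, yields $\abs{c^\beta_{\lambda_n^{(i)}}(a)}\leq N2^{3\gamma}2^{-\gamma j_n}$. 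By construction of the probe, $\lambda_n^{(i)}$ is the $i$-th child of $\lambda_n$, hence a child only of $g^{(i)}$, so $c^\beta_{\lambda_n^{(i)}}(a)=c_{\lambda_n^{(i)}}(a)+\beta_i\,e^{(i)}_{\lambda_n^{(i)}}(a)$; combined with the lower bound $\abs{e^{(i)}_{\lambda_n^{(i)}}(a)}\geq C\,j_n^{-\tau}2^{-H(\alpha)j_n}$ of Lemma~\ref{lemm:lemtr2} (with $\tau\deq 2d'+(q+2)/(qp)$), this forces $\beta_i$ into an interval of length at most $C_1\,j_n^{\tau}2^{-(\gamma-H(\alpha))j_n}$ centred at $-c_{\lambda_n^{(i)}}(a)/e^{(i)}_{\lambda_n^{(i)}}(a)$ --- a quantity depending only on the cube $\lambda_n^{(i)}$, that is, only on the irreducible dyadic point $\bfK_n2^{-J_n}$, and not on $x$ itself.

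\textbf{Covering and counting.} Thus $\beta$ lies in a box $\mathcal B(J_n,\bfK_n)\subset\R^{d_1}$ (a product of $d_1$ such intervals) of measure at most $C_2\,j_n^{d_1\tau}2^{-d_1(\gamma-H(\alpha))j_n}$, determined by $J_n$ and $\bfK_n$. Since $j_n\to\infty$, for every $J_1\geq J^*(a)$ one can choose $n$ with $J_n\geq J_1$, whence $\mathcal S^a\subset\bigcap_{J_1\geq J^*(a)}\bigcup_{J\geq J_1}\bigcup_{\bfK}\mathcal B(J,\bfK)$, the inner union running over the at most $2^{Jd}$ irreducible dyadic points of scale $J$ in $\zu^d$. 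Using $[\alpha J]\geq\alpha J-1$,
\[
  \Lebesgue_{d_1}(\mathcal S^a)\ \leq\ \inf_{J_1}\ \sum_{J\geq J_1}2^{Jd}\,C_2\,([\alpha J])^{d_1\tau}2^{-d_1(\gamma-H(\alpha))[\alpha J]}\ \leq\ \inf_{J_1}\ C_3\sum_{J\geq J_1}J^{d_1\tau}\,2^{J(d-\alpha d_1(\gamma-H(\alpha)))} .
\]
Since $\gamma>H(\alpha)$ and $\alpha\geq1$, the choice \eqref{choixj0} of $J_0$, namely $d-d_1(\gamma-H(\alpha))<0$ with $d_1=2^{dJ_0}$, gives $d-\alpha d_1(\gamma-H(\alpha))<0$; the series converges and its tail tends to $0$, so $\Lebesgue_{d_1}(\mathcal S^a)=0$. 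This proves the proposition.

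\textbf{The main obstacle.} The heart of the matter has already been isolated in Lemma~\ref{lemm:lemtr2}: the \emph{simultaneous} lower bound on all $d_1$ probe coefficients $e^{(i)}_{\lambda_n^{(i)}}(a)$, which rests on $a\in\Aun$ (so that $\abs{G_{d'}(2^j a)}\geq j^{-2d'}$ for $j\geq j_a$) and on the way the coefficients of $g$ were dispatched among $g^{(1)},\dots,g^{(d_1)}$. Granted that, the only genuine points of care in the present proof are: the measurability of $\mathcal S$, needed to legitimise the use of Fubini and supplied by the analytic-set machinery of \S\ref{sec:measurability}; and making the final count close \emph{with room to spare} --- which is exactly what forces the choice \eqref{choixj0} of $J_0$, and hence the dimension $d_1=2^{dJ_0}$ of the probe space, since the $2^{Jd}$ candidate cubes at scale $J$ must be outweighed by the $d_1$-fold product of intervals of length comparable to $2^{-\alpha(\gamma-H(\alpha))J}$.
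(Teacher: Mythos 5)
Your proof is correct and follows essentially the same route as the paper's: a Fubini reduction (the paper's Lemma~\ref{lemm:protr14}) leaves you to show $\Lebesgue_{d_1}(\mathcal{S}^a)=0$ for $a\in\Aun$, and a Borel--Cantelli argument over scales, powered by the simultaneous lower bound on the probe coefficients in Lemma~\ref{lemm:lemtr2}, confines $\beta$ coordinatewise to intervals of length $\asymp j_n^\tau 2^{-(\gamma-H(\alpha))j_n}$ whose total volume is summable thanks to \eqref{choixj0}. The paper's Lemma~\ref{lemm:BaaNf} organises the covering by \emph{all} dyadic cubes $\lambda_0$ at each scale $j_0$ and bounds the \emph{diameter} of the slice $\calB_{a,\lambda_0}$ by comparing two arbitrary members, rather than producing explicit interval centres; this lets it absorb dyadic witnesses with no separate treatment, since a dyadic $x$ still lies in $B(\bfk_0 2^{-j_0},2^{-\alpha j_0})$ for every $j_0$ large. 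Your split into dyadic vs.~non-dyadic witnesses is thus avoidable, and the ``one-parameter probe'' remark is not quite the right phrasing --- for a fixed dyadic $y_0$ the same coefficient estimate, applied at scales $[\alpha j]+J_0$ with $j\to\infty$, directly forces the admissible $\beta$ into a singleton (using $\gamma>H(\alpha)>s-d/p$), so the countable union over dyadics is plainly Lebesgue-null; but this is a presentational blemish, not a gap.
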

This   will show that $\Fagn$ is shy. Let us quickly explain this fact.

Let us denote by $\mu$   the measure $\mathcal{L}_{d_1}$  carried by $\calP$.  Assume that  Proposition \ref{prop:timide} holds true, and fix any  $f \in \Besov{s}{p}{q}(\T^D)$. For $\mu$-almost $F \in \calP$, we know that $f+F \notin  \Fagn$.  Hence the set    $\{ f+ \Fagn\} \cap \calP$  has a $\mu$-measure equal to $0$, i.e. 
$$\mu(\{ f+ \Fagn\} ) =0.$$
Since this is true for any $f \in \Besov{s}{p}{q}(\T^D)$, by Definition \ref{defprevalence}, the set $\Fagn$ is shy.

\mk

Before that, two intermediary lemmas are necessary. Let us introduce
\begin{equation*}
  \calB_a \deq \set{  \beta\in
    \mathbb{R}^{\dsonde}}{
    \begin{array}{c}
      \exists  \, x_\beta \in \calX^{\alpha},  \ \forall \,  \lambda \in
      \Lambda^d, \\ 
      \abs{c^\beta_\lambda(a)} \leq N 2^{-\gamma
        j} ( 1 + \abs{2^j x_\beta - \bfk} )^\gamma 
    \end{array} 
  }
\end{equation*}

\begin{lemma}
  \label{lemm:protr14}
  The application  $(a, \beta) \mapsto \mathbf{1}_{\calB_a}(\beta)$
  is Lebesgue-measurable  as an application from $\T^{d'}\times \R^{\dsonde}$ to $\R$.
\end{lemma}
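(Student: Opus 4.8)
The plan is to realize $\mathbf{1}_{\calB_a}(\beta)$ as an indicator of an analytic (hence universally, in particular Lebesgue, measurable) subset of $\T^{d'}\times\R^{\dsonde}$, obtained by projecting a Borel set along the auxiliary variable $x$. First I would introduce, in analogy with the proof of Proposition~\ref{prop:mesure}, the auxiliary functions
\begin{align*}
  \Psi_\lambda(a,\beta,x) &\deq N 2^{-\gamma j}(1+\abs{2^jx-\bfk})^\gamma - \abs{c^\beta_\lambda(a)},\\
  \Psi(a,\beta,x) &\deq \inf_{\lambda\in\Lambda^d} \Psi_\lambda(a,\beta,x),
\end{align*}
so that $\beta\in\calB_a$ if and only if there exists $x\in\calX^\alpha$ with $\Psi(a,\beta,x)\geq 0$, i.e. $(a,\beta)\in\Pi(\calT)$ where
\begin{equation*}
  \calT \deq \Psi^{-1}([0,\infty)) \cap \paren{\T^{d'}\times\R^{\dsonde}\times\calX^\alpha}
\end{equation*}
and $\Pi$ denotes projection onto the first two coordinates.

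The key point to check is that each $\Psi_\lambda$ is continuous (or at least Borel) on $\T^{d'}\times\R^{\dsonde}\times\T^d$. This follows because $c^\beta_\lambda(a)$ is, for fixed $\lambda=(j,\bfk,\bfl)$, an explicit finite expression: by \eqref{eq:tracecoef4}--\eqref{eq:tracecoef3} it is a finite sum (over $\bfk'\in\Z_j^{d'}$ and $\bfl'\in\{0,1\}^{d'}$) of products $c^\beta_{\lambdaD}\prodprimePsi$, the coefficients $c^\beta_{\lambdaD}=c_{\lambdaD}+\sum_i\beta_i e^{(i)}_{\lambdaD}$ depend linearly (hence continuously) on $\beta$, and the factors $\Psi^{l'_i}_{j,k'_i}(a_i)$ depend continuously on $a$. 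Thus each $\Psi_\lambda$ is continuous, $\Psi$ is a countable infimum of continuous functions hence Borel, $\Psi^{-1}([0,\infty))$ is Borel, and since $\calX^\alpha\in\Borel(\T^d)$ (it is an explicit $\limsup$ of open sets) the set $\calT$ is Borel in the Polish space $\T^{d'}\times\R^{\dsonde}\times\T^d$.

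Consequently its projection $\Pi(\calT)$ is an analytic subset of $\T^{d'}\times\R^{\dsonde}$, hence universally measurable by Theorem~\ref{theo:unimeasurable}, in particular Lebesgue-measurable; and $(a,\beta)\mapsto\mathbf{1}_{\calB_a}(\beta)=\mathbf{1}_{\Pi(\calT)}(a,\beta)$ is therefore Lebesgue-measurable on $\T^{d'}\times\R^{\dsonde}$, which is the assertion. The only mild subtlety — and the step requiring a little care rather than a genuine obstacle — is bookkeeping the continuous dependence of the trace coefficients $c^\beta_\lambda(a)$ on all three variables simultaneously; once that is in place the measurable-projection machinery applies verbatim as in Proposition~\ref{prop:mesure}.
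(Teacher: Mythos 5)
Your argument is correct and is essentially the paper's own proof: define the Borel map $(a,\beta,x)\mapsto \inf_\lambda \bigl(N2^{-\gamma j}(1+\abs{2^jx-\bfk})^\gamma-\abs{c^\beta_\lambda(a)}\bigr)$, intersect the preimage of $[0,\infty)$ with $\T^{d'}\times\R^{\dsonde}\times\calX^\alpha$, project out $x$, and invoke analyticity $\Rightarrow$ universal measurability via Theorem~\ref{theo:unimeasurable}, exactly as in Proposition~\ref{prop:mesure}. The only cosmetic remark is that your auxiliary notation $\Psi_\lambda$ collides with the wavelets $\Psi_\lambda$ already in use; the paper avoids this by calling the map $\phi$.
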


\newcommand{\phip}{\phi}
\begin{proof}
  Let $\phip: (a, \beta, x) \mapsto \inf\limits_{\lambda \in
    \Lambda^d} N2^{-\gamma j}(1 + \abs{2^jx -\textbf{k}})^\gamma -
  \abs{c^\beta_{\lambda }(a)}$. By an argument similar to the one used in proving Proposition \ref{prop:mesure},   $\phip$ is Borel on $\T^{d'}\times \R^{\dsonde} \times
  \T^d$.
  Remark then that $\mathbf{1}_{\calB_a}(\beta) $ can be written as $\mathbf{1}_{\calB_a}(\beta)= \mathbf{1}_{\calG}(a,
  \beta)$, where
  \begin{equation*}
    \calG \deq \set{ (a, \beta) \in \T^{{d'}} \times
      \mathbb{R}^{\dsonde}}{\exists  x \in  \calX^{\alpha}, 
      \phip(a, \beta, x) \geq 0 }.
  \end{equation*}
 This set  can be written as 
  \begin{equation}
  \label{eq22}
 \pi\left(   \phip^{-1}([0, \infty)) \bigcap  
    \paren{ \T^{d'} \times \mathbb{R}^{\dsonde} \times  \calX^{\alpha} }  \right),
  \end{equation}
  where $\pi (a,\beta,x)= (a,\beta)$ is the (continuous) canonical projection on the two first  coordinates. 
  Since the set between brackets in \eqref{eq22}  is clearly a Borel set,  $\calG$ is analytic and in particular,
  by Theorem~\ref{theo:unimeasurable}, it is Lebesgue-measurable. By a Fubini argument, we deduce Lemma \ref{lemm:protr14}.
\end{proof}

\begin{lemma}
  \label{lemm:BaaNf}
  For each $a \in \Aun$, the set $\calB_a$ has Lebesgue measure 0.
\end{lemma}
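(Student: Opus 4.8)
The plan is to cover $\calB_a$, up to a countable union of plainly null sets, by a $\limsup$ of boxes in $\R^{d_1}$ whose total $d_1$-dimensional volume is summable. Write $\calB_a'$ for the set of $\beta\in\calB_a$ admitting a witness $x_\beta\in\calX^\alpha$ that is \emph{not} a dyadic point; the remaining part $\calB_a\sans\calB_a'$ is handled at the very end, and is easy. Three facts will do the work: the disjointness of the supports of the probes $g^{(i)}$ built into \eqref{defprobe}; the lower bound \eqref{eq:eq312} of Lemma~\ref{lemm:lemtr2} (this is exactly where $a\in\Aun$ enters, via $\abs{G_{d'}(2^ja)}\ge j^{-2d'}$); and the choice \eqref{choixj0} of $J_0$, which is precisely what makes a geometric series converge.

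First I would decouple the coordinates of $\beta$. Fix $a\in\Aun$ and $\beta\in\calB_a'$, let $x=x_\beta$ be a non-dyadic witness with irreducible sequence $(J_n,\bfK_n)$ as in \eqref{eq:eq311}, set $j_n:=[\alpha J_n]$, let $\lambda_n:=(j_n,\bfk_n,1^d)$ with $\bfk_n2^{-j_n}=\bfK_n2^{-J_n}$, and let $\lambda_n^{(1)},\dots,\lambda_n^{(d_1)}$ be its sub-cubes at scale $j_n+J_0$. For all $n$ large enough that Lemma~\ref{lemm:lemtr2} applies, $\lambda_n^{(i)}$ lies in the cone of influence of width $2^{J_0+2}$ above $x$, so the inequality defining $\calB_a$, taken at $\lambda=\lambda_n^{(i)}$, gives
\[
  \abs{c^\beta_{\lambda_n^{(i)}}(a)}\ \leq\ N\,2^{-\gamma(j_n+J_0)}\bigl(1+2^{J_0+2}\bigr)^{\gamma}\ =:\ C_0\,2^{-\gamma j_n}.
\]
On the other hand, by \eqref{defprobe} the cube $\lambda_n^{(i)}$ is the $i$-th sub-cube of $\lambda_n$ and of no other dyadic cube at scale $j_n$, so $e^{(m)}_{\lambda_n^{(i)}}=0$ for $m\ne i$, hence by \eqref{eq:defgi} $e^{(m)}_{\lambda_n^{(i)}}(a)=0$ for $m\ne i$, and therefore $c^\beta_{\lambda_n^{(i)}}(a)=c_{\lambda_n^{(i)}}(a)+\beta_i\,e^{(i)}_{\lambda_n^{(i)}}(a)$, which isolates $\beta_i$.

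Next I would turn this into an interval constraint and sum. Feeding the lower bound $\abs{e^{(i)}_{\lambda_n^{(i)}}(a)}\ge C\,j_n^{-\rho}\,2^{-H(\alpha)j_n}$ from \eqref{eq:eq312}, with $\rho:=2d'+\frac{q+2}{qp}$, into the last two relations confines each $\beta_i$ to an interval of length $\leq \ell_{j_n}:=C_1\,j_n^{\rho}\,2^{-(\gamma-H(\alpha))j_n}$, centred at $-c_{\lambda_n^{(i)}}(a)/e^{(i)}_{\lambda_n^{(i)}}(a)$, a point depending only on $f,a,J_n,\bfK_n,i$. Thus $\beta$ lies in a box $B_{J_n,\bfK_n}\subset\R^{d_1}$ of volume $\leq\ell_{j_n}^{d_1}$ depending only on $(J_n,\bfK_n)$; since $J_n\to\infty$ this yields
\[
  \calB_a'\ \subset\ \bigcap_{N_0\ge1}\ \bigcup_{J\ge N_0}\ \bigcup_{\bfK\in\Z_J^d\sans 2\Z_J^d}B_{J,\bfK},
\]
and, as $\#(\Z_J^d\sans 2\Z_J^d)\le 2^{dJ}$ and $[\alpha J]\ge\alpha J-1$,
\[
  \Lebesgue_{d_1}\Bigl(\bigcup_{J\ge N_0}\bigcup_{\bfK}B_{J,\bfK}\Bigr)\ \leq\ C_2\sum_{J\ge N_0}J^{\rho d_1}\,2^{\,(d-\alpha d_1(\gamma-H(\alpha)))\,J}.
\]
By \eqref{choixj0}, $d-d_1(\gamma-H(\alpha))<0$, and since $\alpha\ge1$ and $\gamma>H(\alpha)$ a fortiori $d-\alpha d_1(\gamma-H(\alpha))<0$; the series converges, its tail tends to $0$ as $N_0\to\infty$, and $\Lebesgue_{d_1}(\calB_a')=0$.

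Finally, $\calB_a\sans\calB_a'$ is contained in $\bigcup_{x\text{ dyadic}}\set{\beta\in\R^{d_1}}{\forall\lambda,\ \abs{c^\beta_\lambda(a)}\le N2^{-\gamma j}(1+\abs{2^jx-\bfk})^\gamma}$, a countable union; for a fixed dyadic $x$ of irreducible denominator $2^{\ell_x}$ the same decoupling, applied now to the $i$-th sub-cube at scale $j+J_0$ of the dyadic cube of scale $j$ containing $x$ — where the relevant irreducible generation is the \emph{fixed} $\ell_x$, so \eqref{eq:defcoefg} and $a\in\Aun$ give $\abs{e^{(i)}_{\lambda^{(i)}}(a)}\ge c_x\,j^{-\rho}2^{-(s-d/p)j}$ — traps each $\beta_i$, for all large $j$, in an interval of length $O\bigl(j^{\rho}2^{-(\gamma-(s-d/p))j}\bigr)\to0$ (using $\gamma>H(\alpha)>s-d/p$), so each such set, and their countable union, is null; hence $\Lebesgue_{d_1}(\calB_a)=0$. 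The main obstacle is precisely the decoupling step: proving that along the dyadic approximants of an arbitrary $x\in\calX^\alpha$ one can exhibit, simultaneously for every $i$, a wavelet coefficient of the trace $g^{(i)}_a$ that is large (which forces $\beta_i$ into a shrinking interval) yet untouched by the other probes $g^{(m)}$, $m\ne i$. This is exactly what the dispatching construction \eqref{defprobe} and Lemma~\ref{lemm:lemtr2} are designed to deliver, so the rest — the dyadic counting and the inequality \eqref{choixj0} — is routine bookkeeping.
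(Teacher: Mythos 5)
Your proof is correct and follows essentially the same approach as the paper: you use the decoupling built into \eqref{defprobe}, the lower bound \eqref{eq:eq312} from Lemma~\ref{lemm:lemtr2}, and the choice \eqref{choixj0} of $J_0$ to confine $\beta$ to shrinking boxes indexed by dyadic approximants, then sum volumes and apply Borel--Cantelli. The paper organizes the covering slightly differently --- it bounds $\Lebesgue_{\dsonde}(\calB_{a,\lambda_0})$ for \emph{arbitrary} dyadic cubes $\lambda_0$ (not just irreducible ones) by a two-point diameter estimate, noting that reducible $\bfk_0$ only makes the probe coefficient larger, which lets it avoid your separate case for dyadic witnesses --- but the key estimate and the counting are the same.
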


\begin{proof}
  For any $\lambda_0 \deq (j_0, \bfk_0,\bfl_0) \in \Lambda^d$ we put
  \begin{displaymath}
    \calB_{a, \lambda_0} \deq \set{ \beta\in \mathbb{R}^{\dsonde}}{
       \begin{array}{c} 
        \exists \,  x_\beta \in B(\bfk_0 2^{-j_0}, 2^{ -  \alpha j_0}),
        \forall \lambda \in \Lambda^d, \\ 
        \abs{c^\beta_{\lambda }(a)} \leq N
        2^{-\gamma j} (1 + \abs{ 2^j x_\beta - \bfk } )^\gamma 
      \end{array}
    }
  \end{displaymath} 
  so that
  \begin{displaymath}
    \calB_a = \limsup_{j_0 \to \infty} \bigcup_{k_0 \in \Z_{j_0}^d} 
    \calB_{a, \lambda_0}  = \bigcap_{j\geq 1}  \ \bigcup_{j_0\geq j}  \ \bigcup_{\bfk_0 \in \Z_{j_0}^d}  \ \calB_{a, \lambda_0} 
  \end{displaymath}

  \newcommand{\betat}{\widetilde{\beta}}

  We want to show that $\Lebesgue_{\dsonde}(\calB_a) = 0$ by bounding by above
  each $\Lebesgue_{\dsonde}(\calB_{a, \lambda_0})$.  Suppose that $\beta$
  and $\betat$ both belong to some $\calB_{a, \lambda_0}$, where $j_0 $ is
  large enough so that $j_1 \deq \floor{\alpha j_0} \geq j_a$
  (cf. Proposition \ref{prop:protr1}).

  Applying Lemma \ref{lemm:lemtr2}, there exists $\lambda_1= (j_1,
  \bfk_1,\bfl_1)$ such that for all $1 \leq i \leq \dsonde$,
  \begin{enumerate}
   
  \item \label{item:1} $\lambda_1^{(i)}$ is in the cone of
    influence of width $2^{J_0+1}$ above $x_\beta$ and $x_{\betat}$,
	
  \item \label{item:2} $\abs{e^{(i)}_{\lambda_1^{(i)} }(a)} \geq  C
    j_1^{-(q_{d'}+\frac{q+2}{qp})} 2^{-H(\alpha)j_1}$, 
	

  \end{enumerate}

  From~(\ref{item:1}) we deduce that
  \begin{equation*}
    \biggabs{c_{\lambda_1^{(i)} }^\beta(a)} =
    \biggabs{c_{\lambda_1^{(i)} }(a) + \sum_{i=1}^{\dsonde} \beta_i  
      e^{(i)}_{\lambda_1^{(i)} }(a)} \leq \frac{C}{2} 2^{-\gamma j_1} 
  \end{equation*}
  ($C$ depending on $N, J_0, \gamma$) and the same for $\betat$.
  On the other hand, by construction of the functions $g^{(i)}$, we have
  $e^{(i')}_{\lambda_1^{(i)} }(a) = 0$ for any $i \neq i'$.  Thus
  \begin{align*}
    \abs{(\beta_{i }-\betat_{i })e^{i}_{\lambda_1^{(i)}
      }(a) } 
    &= \left| \biggparen{ c_{\lambda_1^{(i)}    }(a)+\sum^{\dsonde}_{i=1}\beta_i
        e^{i}_{\lambda_1^{(i)} }(a) } \right. \\
    &  \hspace{10em} \left. - \biggparen{
        c_{\lambda_1^{(i)}
        }(a)+\sum^{\dsonde}_{i=1}\betat_{i}
        e^{i}_{\lambda_1^{(i)}  }(a) } \right| \\ 
    &\leq \abs{c_{\lambda_1^{(i)}    }(a)+\sum^{\dsonde}_{i=1}\beta_i
      e^{i}_{\lambda_1^{(i)} }(a)} + \abs{c_{\lambda_1^{(i)}
      }(a)+\sum^{\dsonde}_{i=1}\betat_{i}
      e^{i}_{\lambda_1^{(i)}  }(a) } \\ 
    &\leq C 2^{-\gamma j_1}.
  \end{align*}
  
\noindent Recall that $\gamma >H(\alpha)$. Combining this with~(\ref{item:2}), we deduce that
  \begin{align*}
    \abs{\beta_{i }-\betat_{i}} 
    &\leq C 2^{-(\gamma-H(\alpha))j_1} j_1^{q_{d'}+\frac{q+2}{qp}}  
  \end{align*}
  hence
  \begin{displaymath}
    \Lebesgue_{\dsonde} ( \calB_{a, \lambda_0}   ) \leq C
    2^{-\dsonde (\gamma - H(\alpha)) j_1} j_1^{\dsonde(q_{d'}+\frac{q+2}{qp})}
  \end{displaymath}
  Summing over all $2^{d j_0}$ nodes $\lambda_0$ at scale $j_0$ we
  conclude that
  \begin{align*}
    \Lebesgue_{\dsonde} \biggparen{ \bigcup_{k_0 \in \Z_{j_0}^d}
      \calB_{a, \lambda_0} } 
    &\leq C  2^{d j_0 - \dsonde (\gamma - H(\alpha)) j_1}
    j_1^{(q_{d'}+\frac{q+2}{qp})\dsonde} \\
    & \leq C 2^{j_0( d -\dsonde\alpha(\gamma-H(\alpha) ) )} (\alpha
    j_0)^{(q_{d'}+\frac{q+2}{qp})\dsonde}
  \end{align*}
  whose series converges because of our choice \eqref{choixj0} for $J_0$.     The Borel-Cantelli lemma implies that
  $\Lebesgue_{\dsonde}(\calB_a) = 0$.
\end{proof}

\begin{proof}[Proof of Proposition~\ref{prop:timide}]
  We can rewrite the result of Lemma~\ref{lemm:BaaNf} as
  \begin{equation*}
    \int_{\T^{d'}} \int_{\R^{\dsonde}}
    \mathbf{1}_{\calB_a}(\beta) \,  d\beta  \, da = 0 
  \end{equation*}
  Applying Fubini's theorem (measurability being guaranteed by
  Lemma~\ref{lemm:protr14}),
  \begin{equation*}
    \int_{\R^{\dsonde}} \int_{\T^{d'}}
    \mathbf{1}_{\calB_a}(\beta) \,  da \, d\beta = 0 
  \end{equation*}
  In other words, for almost all $\beta \in \R^{\dsonde}$, there
  exists a set $\calA_\beta(f)$ of full Lebesgue measure in $\T^{d'}$
  such that $a \in \calA_\beta(f)$ implies $\beta \not \in \calB_a$.
  This in turn implies $f^\beta \not\in \Fagn$ and the announced
  result follows by complementarity in~$\R^{\dsonde}$.
\end{proof}

Our proof is now complete.

\appendix

\section{Direct proof of Theorem~\ref{theo:tracebesov}}
\label{sec:proof-coroll-refc}

\newcommand{\indexk}{\bfk \in \Z_j^d}
\newcommand{\indexkp}{\bfk' \in \Z_j^{d'}}

It suffices to show that for any $\eps > 0$, the set 
$$
  \set{a \in \zu^{d'}}{f_a \in \Besov{s-\eps}{p}{\infty}(\T^d)} 
$$
has full Lebesgue measure in $\T^{d'}$.  Then we will get the
announced result by considering a sequence $\eps_n$ decreasing to 0.

Recall that $f_a=F_a+G_a$, where $F_a$ and $G_a$ are defined in \eqref{eq:tracecoef} and \eqref{eq:tracecoef2}.   Recall also  that the wavelet coefficients of $f_a$ are denoted by $d_\gla(a)$ in its wavelet decomposition \eqref{decompffinal}, while those of $f$ are denoted by $c_\glaD$.

First, we are going to apply Proposition \ref{proptrace1}. Recalling the definition  \eqref{eq:tracecoef4} of the wavelet coefficients of $G_a$, we need to bound by above the sum
 \begin{align}
 \nonumber
  \sum_{ \gla \in \Lambda_j^d\times 0^d} \abs{d_\lambda(a)}^p &= \sum_{\gla \in \Lambda_j^d\times 0^d}
  \biggabs{\sum_{\indexkp,\bfl'\in L^{d'}} c_{\lambdaD} \prodprimePsi }^p \\
  \nonumber&\leq C \sum_{\glaD\in \Lambda_j^D\times \{0^d\times L^{d'} \}} \abs{c_{\lambdaD}}^p \biggabs{
    \prodprimePsi }^p\\
 &\leq C \sum_{\glaD\in \Lambda_j^D\times \{0^d\times \{0,1\}^{d'} \} } \abs{c_{\lambdaD}}^p \biggabs{
    \prodprimePsi }^p \label{eqfin1}
  \end{align}
for some constant $C$ that depends only on $\Psi^0$ and $\Psi^1$.   Indeed, since
$\Psi^0$ and $\Psi^1$  are compactly supported, for each $a$ only a
finite number (independent of $j$ or $a$) of terms in the second sum
are non zero. 

Now, using  definition \eqref{eq:tracecoef3}  for the wavelet coefficients of $F_a$, we find that  
\begin{align}
 \nonumber  \sum_{ \gla \in \Lambda_j^d\times L^d} \abs{d_\lambda(a)}^p &= \sum_{\gla \in \Lambda_j^d\times L^d}
  \biggabs{\sum_{\indexkp,\bfl'\in \{0,1\}^{d'}} c_{\lambdaD} \prodprimePsi }^p \\
  &\leq C \sum_{\glaD\in \Lambda_j^D\times   \{L^d\times  \{0,1\}^{d'}\}} \abs{c_{\lambdaD}}^p \biggabs{
    \prodprimePsi }^p\label{eqfin2}
\end{align}
again for some constant $C$ that depends only on $\Psi^0$ and $\Psi^1$. 

We now consider the sum of all wavelet coefficients of $f_a$, and we  integrate it over $a \in \T^{d'}$. Using \eqref{eqfin1} and \eqref{eqfin2}, and recalling that $L^d\cup 0^d = \{0,1\}^d$,  we get
$$
  \int_{\T^{d'}} \sum_{\gla \in \Lambda_j^d\times \{0,1\}^d} \abs{ d_{\lambda }(a) }^p da \leq C
  \sum_{\glaD\in \Lambda_j^D\times \{0,1\}^D} \abs{c _{\lambdaD} }^p \int_{\T^{d'}}
  \biggabs{\prodprimePsi }^p da.$$
 The attentive reader has noticed that there is no wavelet coefficient  associated with the index $l^D=0^D$ for the function $f$, hence the sum of $\glaD$ over $\Lambda_j^D\times \{0,1\}^D$ is the same as the sum of $\glaD $ over $\Lambda_j^D\times L^D$.
 
  Since the functions $\Psi_{j,k'_i}^{l'_i}$   are  bounded uniformly in $j$,  $\bfk'$ and $\bfl'$, and has support
    width $\leq K 2^{-j}$,
    $$  \int_{\T^{d'}} \sum_{\gla \in \Lambda_j^d \times \{0,1\}^d} \abs{ d_{\lambda }(a) }^p da  \leq C \sum_{\glaD\in \Lambda_j^D\times L^D}
  \abs{c_{\lambdaD} }^p 2^{-j d'} .$$

  Using the definition of Besov
    norm~(\ref{eq:caracbesov}), we see that
      $$  \int_{\T^{d'}} \sum_{\gla \in \Lambda_j^d \times \{0,1\}^d} \abs{ d_{\lambda }(a) }^p da  \leq C \norm{\Besov{s}{p}{\infty}}{f}
  2^{(d-sp) j}
$$
for some other constant $C$ that still depends only on $\Psi$.

Let us now define
\begin{equation*}
  \calA_{j} \deq \biggset{a \in \T^{d'}}{\sum_{\gla \in \Lambda_j^d \times \{0,1\}^d} \abs{
      d_{\lambda }(a) }^p > C  \norm{\Besov{s}{p}{\infty} (\zu^D) }{f} 2^{-((s-\eps)p-d)j} }
\end{equation*}
By  Markov's inequality, it follows that
\begin{equation*}
  \Lebesgue_{d'}(\calA_{j}) 
  \leq \frac{C \norm{\Besov{s}{p}{\infty} (\zu^D)}{f} 2^{(d-sp) j}}{C
    \norm{\Besov{s}{p}{\infty} (\zu^D)}{f} 2^{-((s-\eps)p-d)j} } = 2^{-j \eps p}
\end{equation*}
thus, applying the Borel-Cantelli lemma, $\Lebesgue_{d'}(\limsup_j
\calA_j) = 0$.  

Finally, by construction,  for any $a \in \T^{d'} \backslash
\limsup_j \calA_j$, there exists $j_0$ such that $j \geq j_0$ implies
\begin{equation*}
  \sum_{\gla \in \Lambda_j^d \times \{0,1\}^d}
  \abs{ d_{\lambda }(a) }^p 2^{(sp-d)j} \leq C
  \norm{\Besov{s}{p}{\infty} (\zu^D)}{f},
\end{equation*}
hence $f_a \in \Besov{s}{p}{\infty}(\T^d)$. \hfill \qed

\section{Proof of Proposition~\ref{prop:mesure}, case $q = \infty$}
\label{sec:proof-key-nonsep}

The only serious difference is due to the fact that
$\Besov{s}{p}{\infty}(\T^D)$ is no longer separable, so the argument
for universal measurability of the ancillary set
\begin{equation*}
  \Fagn \deq \set{f\in \Besov{s}{p}{\infty}(\T^D)}{\Lebesgue_{d'}(\Aagn) > 0}
\end{equation*}
 has to use a different definition of analyticity.

\subsection{Analytic sets in non-Polish spaces}  \ \mk

\label{sec:analytic-sets}

\newcommand{\stimes}{\times}

Analytic sets were previously defined in Polish spaces as continuous
images of Borel sets: this cannot apply to
$\Besov{s}{p}{\infty}(\T^D)$.  However we can use the following more
general definition, adapted from~\cite{Choquet:1954uo}, for any
Hausdorff topological space $X$ endowed with its Borel
$\sigma$-algebra $\Borel(X)$.  First, for a compact topological space
$K$ we write $\calK$ the collection of its closed subsets and
$(\Borel(X) \stimes \calK)_{\sigma \delta}$ the collection of
countable intersection of countable unions of sets that are Cartesian
products of a Borel set in $X$ and a closed set in $K$ and $\pi: X
\times K\rightarrow X$ is the canonical projection map 
$\pi(x, y) = x$.

\smallskip

\begin{definition}
  \label{defi:analnonsep}
  A set $A \subset X$ is said to be \emph{analytic} if there exists a
  compact space $K$ and $S \in (\Borel(X) \stimes \calK)_{\sigma
    \delta}$ such that
  \begin{equation*}
    A = \pi(S).
  \end{equation*}
\end{definition}

It is easy to check that this definition coincides with the previous
one when $X$ is Polish.  Furthermore, in this framework,
Theorem~\ref{theo:unimeasurable} (based on Choquet's capacitability
theorem) now holds in any Hausdorff topological space (see
\cite{Choquet:1954uo}).  

\subsection{Measurability}  \ \mk

\label{sec:measurability-nonsep}

\begin{proposition}
  \label{prop:mesure-nonsep}
  The set $\Fagn$ is universally measurable in
  $\Besov{s}{p}{\infty}(\T^D)$.
\end{proposition}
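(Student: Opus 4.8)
The plan is to repeat the proof of Proposition~\ref{prop:mesure} essentially verbatim, with one structural change: since $\Besov{s}{p}{\infty}(\T^D)$ is not Polish, the classical notion of analytic set (continuous image of a Borel subset of a Polish space) must be replaced by Definition~\ref{defi:analnonsep}, for which, as recalled in \S\ref{sec:analytic-sets}, Theorem~\ref{theo:unimeasurable} remains valid in any Hausdorff topological space. Concretely, I would keep the auxiliary functions $\Phi_\lambda$, $\Phi=\inf_{\lambda\in\Lambda^d\times L^d}\Phi_\lambda$ and $\widetilde\Phi$ defined exactly as in the proof of Proposition~\ref{prop:mesure}, so that again $\Fagn=\widetilde\Phi^{-1}\bigl((0,+\infty)\bigr)$ and it suffices to show that $\widetilde\Phi$ is $\mu$-measurable for every complete Borel measure $\mu$ on $\Besov{s}{p}{\infty}(\T^D)$. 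The only genuinely new point is to exhibit $\Pi(\widetilde\calT)$, the projection of $\widetilde\calT\deq\Phi^{-1}([0,+\infty))\cap\bigl(X'\times\calX^{\alpha}\bigr)$ onto $X'\deq\Besov{s}{p}{\infty}(\T^D)\times\T^{d'}$, as an analytic subset of $X'$ in the sense of Definition~\ref{defi:analnonsep}, with compact factor $K\deq\T^d$; here $X'$ is metrizable (hence Hausdorff) because $\Besov{s}{p}{\infty}(\T^D)$ is metrizable, and $\widetilde\calT\subset X'\times K$.

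For the mechanism: even when $q=\infty$ the Besov topology is stronger than the product topology on wavelet coefficients, so each $f\mapsto c_{\lambdaD}(f)$ is continuous; since $c_\lambda(a)$ is a \emph{finite} linear combination of such coefficients with weights $\prod_i\Psi^{l'_i}_{j,k'_i}(a_i)$ depending continuously on $a$, the map $\psi_\lambda\deq\abs{c_\lambda(\cdot)}$ is continuous on $X'$ while $\phi_\lambda:x\mapsto N2^{-\gamma j}(1+\abs{2^jx-\bfk})^\gamma$ is continuous and nonnegative on $K$. Thus $\{\Phi_\lambda\geq 0\}=\bigl\{(y,x)\in X'\times K:\psi_\lambda(y)\leq\phi_\lambda(x)\bigr\}$, and the elementary identity
\[
\bigl\{\psi\leq\phi\bigr\}=\bigcap_{m\geq 1}\ \bigcup_{r\in\mathbb{Q}}\ \bigcup_{l\geq 1}\ \bigl\{y:\psi(y)-\tfrac1m<r\bigr\}\times\bigl\{x:\phi(x)\geq r+\tfrac1l\bigr\}
\]
realizes $\{\Phi_\lambda\geq 0\}$ as an element of $(\Borel(X')\times\calK)_{\sigma\delta}$, where $\calK$ denotes the closed subsets of $K$: indeed $\{\psi_\lambda-\frac1m<r\}$ is open (hence Borel) in $X'$, and $\{\phi_\lambda\geq r+\frac1l\}$ is closed in $K$. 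Moreover each $\calX^{\alpha}_{j}$ is a finite union of closed cubes, so $X'\times\calX^{\alpha}=\bigcap_n\bigcup_{j\geq n}\bigl(X'\times\calX^{\alpha}_{j}\bigr)$ also lies in $(\Borel(X')\times\calK)_{\sigma\delta}$; since that class is stable under countable intersections,
\[
\widetilde\calT=\Bigl(\bigcap_{\lambda\in\Lambda^d\times L^d}\{\Phi_\lambda\geq 0\}\Bigr)\cap\bigl(X'\times\calX^{\alpha}\bigr)\in(\Borel(X')\times\calK)_{\sigma\delta}.
\]
Hence $\Pi(\widetilde\calT)$ is analytic in $X'$ by Definition~\ref{defi:analnonsep}, and universally measurable by (the Hausdorff-space version of) Theorem~\ref{theo:unimeasurable}.

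From here the argument is identical to the one in the proof of Proposition~\ref{prop:mesure}: fixing a complete Borel measure $\mu$ on $\Besov{s}{p}{\infty}(\T^D)$ and using that $\T^{d'}$ is second countable (so $\Borel(X')=\Borel(\Besov{s}{p}{\infty}(\T^D))\otimes\Borel(\T^{d'})$), the set $\Pi(\widetilde\calT)$ is $\mu\otimes\Lebesgue_{d'}$-measurable; writing $\widetilde\Phi(f)=\int_{\T^{d'}}\mathbf{1}_{\Pi(\widetilde\calT)}(f,a)\,da$ and invoking Fubini's theorem for complete product measures yields that $\widetilde\Phi$ is $\mu$-measurable, and since $\mu$ was arbitrary, $\widetilde\Phi$ — hence $\Fagn$ — is universally measurable in $\Besov{s}{p}{\infty}(\T^D)$. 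The only step where I expect any (minor) obstacle is the combinatorial identity above: one must be careful to land inside the \emph{closed} subsets of the compact factor $K=\T^d$ rather than merely its open subsets, which is exactly why the double limiting procedure in $m$ and $l$ is needed; everything else transfers mechanically from the proof of Proposition~\ref{prop:mesure}.
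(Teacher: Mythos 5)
Your proof is correct, and it takes a genuinely different route from the paper's at the one place that matters. Both arguments reduce the problem to showing $\widetilde{\calT}\in(\Borel(X)\times\calK)_{\sigma\delta}$ with $X=\Besov{s}{p}{\infty}(\T^D)\times\T^{d'}$ and $K=\T^d$, and both then project and invoke the Choquet-type version of Theorem~\ref{theo:unimeasurable} plus Fubini. The difference is how each exhibits $\Phi_\lambda^{-1}([0,\infty))$ as a $(\Borel(X)\times\calK)_{\sigma\delta}$ set. The paper discretizes the $X$-variables: it identifies the relevant wavelet coefficients $s_\lambda$ of $f$ with a point of $\R^{\Delta_j}$, covers $(s_\lambda,a)$-space by dyadic cubes $Q(n,m,m')$, forms $X_\lambda(n,m,m')=\set{x}{\sup_{Q(n,m,m')}\Theta\ge 0}$, and invokes Lemma~\ref{lemm:compactsup} (a continuous function sup'd over a compact parameter set remains continuous) to see that these sets are closed in $K$; the $\sigma\delta$ structure then comes from letting the mesh $2^{-n}\to 0$. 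You instead observe that $\Phi_\lambda$ splits additively, $\Phi_\lambda(f,a,x)=\phi_\lambda(x)-\psi_\lambda(f,a)$ with $\phi_\lambda$ continuous on $K$ and $\psi_\lambda$ continuous on $X$, so that $\{\Phi_\lambda\ge 0\}=\{\psi_\lambda\le\phi_\lambda\}$, and you encode this inequality directly as a $\sigma\delta$ union of open $\times$ closed rectangles by a rational-interpolation identity. Your route is more elementary: it avoids Lemma~\ref{lemm:compactsup}, avoids the identification of wavelet-coefficient slices with points of $\R^{\Delta_j}$, and avoids the cube decomposition altogether. What it spends is the structural hypothesis that $\Phi_\lambda$ separates as a difference of a function of $x$ and a function of $(f,a)$ --- but that separation is manifest from the definition of $\Phi_\lambda$, so nothing is actually lost here; the paper's method would be the one to reach for if $\Phi_\lambda$ were a less transparent continuous function of $(f,a,x)$. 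Two small points worth keeping explicit in a write-up: the interpolation identity does require the double quantifier in $m$ and $l$ precisely so that the $K$-factor lands in the \emph{closed} sets $\{\phi_\lambda\ge r+1/l\}$, as you note; and one should remark (as you do) that $\Borel(X)=\Borel(\Besov{s}{p}{\infty}(\T^D))\otimes\Borel(\T^{d'})$ because $\T^{d'}$ is second countable, which is needed for the Fubini step.
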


\newcommand{\dimkp}{{\Delta_j}}

\begin{proof}
  We use the same notations $\Phi_\lambda$, $\Phi$,
  $\widetilde{\calT}$ and $\Pi$ as in the proof of
  Proposition~\ref{prop:mesure}.  We will show that the set
  $\Pi(\widetilde{\calT})$ is analytic in the sense of
  Definition~\ref{defi:analnonsep}, with $X \deq
  \Besov{s}{p}{\infty}(\T^D) \times \T^{d'}$ and $K \deq \T^d$.  For
  short let us put $\dimkp \deq \Z_j^{d'} \times \{0, 1\}^{d'}$.
  Given $n \in \N$, $m \in \Z^{\dimkp}$ and $m' \in \Z^{d'}$ we write
  \begin{align*}
    Q(n, m) &\deq 2^{-n} \prod_{(k',l') \in \dimkp} [m_{k',l'},
    m_{k',l'} + 1] \\
    Q'(n, m') &\deq \ 2^{-n} \prod_{1\leq i \leq d'} \  [m'_i, m'_i + 1] \\
    Q(n, m, m') &\deq Q(n, m) \times Q'(n, m')
  \end{align*}

  Having fixed $\lambda = (j, \bfk, \bfl) \in \Lambda^d \times L^d$
  and considering $\lambdaD =(j, (\bfk, \bfk'), (\bfl, \bfl'))$, any
  $f \in \Besov{s}{p}{\infty}(\T^D)$ induces a map $s_\lambda :
  (\bfk',\bfl') \mapsto c_{\lambdaD}$ that we identify to an element
  of $\R^\dimkp$. Then we define
  \begin{equation*}
    \Theta(s_\lambda, a, x) \deq N 2^{-\gamma j} (1
      + \smallabs{2^jx - \bfk})^\gamma - \abs{c_{\lambda}(a)} 
  \end{equation*}
  as well as
  \begin{equation*}
    X_\lambda(n, m, m') \deq \biggset{x \in
      \T^d}{\sup_{(s_\lambda, a) \in Q(n, m, m')} \Theta(s_\lambda, a,
      x) \geq 0}
  \end{equation*}
  The dependency of $c_\lambda(a)$ on $s_\lambda$ and $a$ is given
  in \eqref{eq:tracecoef4}, \eqref{eq:tracecoef3} and \eqref{decompffinal2} and it is continuous. So is the function
  $\Theta$.  Since $Q(n, m, m')$ is compact, it follows that
  $X_\lambda(n, m, m')$ is closed (Lemma~\ref{lemm:compactsup}).
  Furthermore, if we put
  \begin{equation*}
    F_\lambda(n, m) \deq \set{f \in
      \Besov{s}{p}{\infty}(\T^D)}{s_{\lambda} \in Q(n, m)}
  \end{equation*}
  then it is clear by continuity of $\Phi_\lambda$ that
  \begin{equation*}
    \Phi_\lambda^{-1}([0, \infty)) = \bigcap_{n \in \N}
    \bigcup_{(m,m') \in \Z^{\dimkp} \times \Z_j^{d'}} F_\lambda(m,n)
    \times Q'(m', n) \times   X_\lambda(m, m', n) 
  \end{equation*}
  This proves that $\Phi_\lambda^{-1}([0,\infty)) \in (\Borel(X)
  \stimes \calK)_{\sigma \delta}$.  We deduce that $\Phi^{-1}([0,
  \infty)) = \bigcap_\lambda \Phi_\lambda^{-1}([0,\infty))$ and the
  set $\widetilde{\calT}$ belong to $(\Borel(X) \stimes \calK)_{\sigma
    \delta}$ as well because $\Besov{s}{p}{\infty}(\T^D) \times \T^{d'}
  \times \calX^\alpha$ is obviously in $(\Borel(X) \stimes
  \calK)_{\sigma \delta}$.  Its projection $\Pi(\widetilde{\calT})$ is
  thus analytic and we conclude in the same way as for
  Proposition~\ref{prop:mesure}.
\end{proof}

\begin{lemma}
  \label{lemm:compactsup}
  Let $A$ and $B$ be topological spaces, $A$ compact and $B$ locally
  compact.  If $f$ is continuous: $A \times B \to \R$, then $f_s : b
  \mapsto \sup_{a \in A} f(a, b)$ is continuous on $B$.
\end{lemma}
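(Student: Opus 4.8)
The plan is to prove continuity of $f_s$ at an arbitrary point $b_0 \in B$ by establishing upper and lower semicontinuity there separately. I will first observe that, since $A$ is compact and nonempty and $f(\cdot, b)$ is continuous for each fixed $b$, the supremum defining $f_s(b)$ is attained; thus $f_s$ is real-valued and $f_s(b) = \max_{a \in A} f(a, b)$, and it suffices to compare $f_s(b)$ with $f_s(b_0)$ for $b$ near $b_0$.

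For upper semicontinuity I will fix $\eps > 0$ and, for each $a \in A$, use $f(a, b_0) \le f_s(b_0) < f_s(b_0) + \eps$ together with continuity of $f$ to produce open neighborhoods $U_a \ni a$ in $A$ and $V_a \ni b_0$ in $B$ on whose product $f < f_s(b_0) + \eps$. Compactness of $A$ extracts a finite subcover $U_{a_1}, \dots, U_{a_n}$ of $\{U_a\}_{a \in A}$; then $V \deq \bigcap_{i=1}^n V_{a_i}$ is an open neighborhood of $b_0$ on which, for every $b \in V$ and every $a \in A$ (choosing $i$ with $a \in U_{a_i}$), one has $f(a, b) < f_s(b_0) + \eps$, hence $f_s(b) \le f_s(b_0) + \eps$. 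This tube-lemma-style use of the compactness of $A$ is the only genuine step of the argument, and it is where all the work lies.

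For lower semicontinuity I will pick $a_0 \in A$ realizing $f(a_0, b_0) = f_s(b_0)$ and use continuity of $f$ at $(a_0, b_0)$ to find an open neighborhood $V' \ni b_0$ with $f(a_0, b) > f_s(b_0) - \eps$ for all $b \in V'$, whence $f_s(b) \ge f(a_0, b) > f_s(b_0) - \eps$ on $V'$. On $V \cap V'$ the two estimates combine to give $\abs{f_s(b) - f_s(b_0)} \le \eps$, which proves continuity of $f_s$ at $b_0$; as $b_0$ was arbitrary, $f_s$ is continuous on $B$. I note that the local compactness of $B$ is not actually needed for this argument, only the compactness of $A$.
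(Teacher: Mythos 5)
Your proof is correct, and it takes a genuinely different route from the paper's. The paper reduces to the case of compact $B$ (using local compactness), then invokes the criterion that a Hausdorff-space-valued map on a compact space is continuous iff its graph is compact: the graph $\Gamma$ of $f$ is compact, its projection $\varpi(\Gamma)$ onto $B \times \R$ is therefore compact, $f_s$ is lower semicontinuous so its epigraph $E$ is closed, and one checks that $\Gr(f_s) = E \cap \varpi(\Gamma)$ is compact, hence $f_s$ is continuous. Your argument instead proves upper and lower semicontinuity directly: the tube-lemma-style covering of $A$ for upper semicontinuity, and attainment of the sup at some $a_0$ together with continuity of $f$ at $(a_0, b_0)$ for lower semicontinuity. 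Your approach is more elementary (no recourse to the graph-compactness criterion or to the identification of $\Gr(f_s)$ as an intersection), and, as you correctly note, it dispenses entirely with the hypothesis that $B$ be locally compact, which the paper only uses to pass to compact $B$; so your version is strictly more general. One cosmetic remark: for lower semicontinuity you could simply cite, as the paper does, that a pointwise supremum of continuous functions is lower semicontinuous, making the attainment of the sup unnecessary for that half of the argument (though you do use compactness of $A$ to justify writing $\sup$ as $\max$, which is harmless).
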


\begin{proof}
  Recall that a Hausdorff space-valued function defined on a compact
  set is continuous if and only if its graph is compact.  Continuity
  being a local property, we can suppose without loss of generality
  that $B$ is also compact.  The graph $\Gamma$ of $f$ is then compact
  and so is its image by the projection $\varpi : (a, b, y) \mapsto
  (b, y)$.  As a supremum of continuous functions, $f_s$ is lower
  semi-continuous, so its epigraph $E$ is closed.  But the graph of
  $f_s$ is precisely $E \cap \varpi(\Gamma)$, so it is compact; it
  follows that $f_s$ is continuous.
\end{proof}

\mk\mk

\subsection*{Acknowledgments}
 
The authors  are thankful to  Basarab Matei for his  simulations and pictures of Daubechies' wavelets.

\bibliographystyle{acm}

\end{document}